\documentclass[10pt,a4paper]{article}
\usepackage[utf8]{inputenc}
\usepackage[english]{babel}
\usepackage[dvipsnames]{xcolor}
\usepackage{amsmath}
\usepackage{amsfonts}
\usepackage{amsthm}
\usepackage{amssymb}
\usepackage{enumerate}
\usepackage{tikz}
\usepackage{graphicx}
\usepackage[margin=1.2in]{geometry}
\author{Heidi Van den Camp}

\title{The Effect of Symmetry-preserving Operations on 3-Connectivity}
\date{{\small Ghent University\\ Department of Applied Mathematics, Computer Science, and Statistics\\ Krijgslaan 281 S9\\ 9000 Ghent, Belgium}}

\newtheorem{theorem}{Theorem}

\newtheorem{definition}{Definition}
\newtheorem{lemma}{Lemma}
\newtheorem{corollary}{Corollary}
\newtheorem*{theorem*}{Theorem}
\newtheorem*{corollary*}{Corollary}

\newtheorem*{remark*}{Remark}

\begin{document}
	\maketitle
	\section*{Abstract}
	In 2017, Brinkmann, Goetschalckx and Schein introduced a very general way of describing operations on embedded graphs that preserve all orientation-preserving symmetries of the graph. This description includes all well-known operations such as Dual, Truncation and Ambo. As these operations are applied locally, they are called local orientation-preserving symmetry-preserving operations (lopsp-operations). In this text we will use the general description of these operations to determine their effect on 3-connectivity. Recently it was proved that all lopsp-operations preserve 3-connectivity of graphs that have face-width at least three. We present a simple condition that characterises exactly which lopsp-operations preserve 3-connectivity for all embedded graphs, even for those with face-width less than three. 
	
	\section{Introduction}	
		
	Polyhedra have fascinated people for a very long time. Through the centuries, many renowned scientists such as Kepler, Coxeter \cite{coxeter1973regular, coxeter1954uniform}, and Conway \cite{conway2016symmetries} have worked with these fascinating objects. During this time numerous symmetry-preserving operations on polyhedra have been studied. The most well-known operations are probably \emph{Dual}, \emph{Medial} and \emph{Truncation}, but there are many others. In chemistry for example operations such as \emph{Leapfrog}, \emph{Chamfer} and \emph{Capra} are important in the study of fullerenes \cite{diudea2006generalized, fowler1992symmetry, king2006chirality}. These symmetry-preserving operations stem from the study of polyhedra i.e.\ 3-connected plane graphs, but they are also applied to more general embedded graphs \cite{diudea2003leapfrog,pisanski2017operations}. In that last article several, but not all, operations are described by their effect on the \emph{arcs} of an embedded graph.
	An obvious extension to higher genus embeddings is to apply the operations to polyhedral --that is, locally plane-- embeddings, which are a generalisation of polyhedra to higher genus, but it is perfectly possible to define the application of such operations for embeddings that are not locally plane. Polyhedral embeddings often make up only a very small fraction of all embedded graphs, especially in higher genera \cite{vcleemput2021genera}. For example, of all graphs on 28 vertices only 0,34\% have a polyhedral embedding. This shows the importance of also considering embeddings that are not polyhedral.

	In \cite{brinkmann2017comparing} Brinkmann, Goetschalckx and Schein presented a general definition to describe symmetry-preserving operations on embedded graphs. This approach was further generalised and discussed in more detail in \cite{brinkmann2021local}. In \cite{brinkmann2017comparing} the focus is on operations that preserve all symmetries, so-called local symmetry-preserving or lsp-operations, and in \cite{brinkmann2021local} the focus is on the more general class of operations that are only guaranteed to preserve the orientation-preserving symmetries, called local orientation-preserving symmetry-preserving operations or lopsp-operations. Examples of well-known operations that are lopsp-operations but not lsp-operations are Snub, Gyro, Capra, and the chiral Goldberg-Coxeter operations \cite{dutour2004goldberg}. With these definitions research is no longer limited to the specific operations that have been described in the past. Furthermore, it becomes possible to prove general statements about all lsp- or all lopsp-operations at the same time.
	
	In this paper, we look at the effect of lopsp-operations on 3-connectivity of simple embedded graphs of arbitrary genus. It was proved in \cite{brinkmann2017comparing} that all lsp-operations associated with 3-connected tilings preserve 3-connectivity of plane embedded graphs. In \cite{brinkmann2021local} this was proved for lopsp-operations applied to polyhedral embeddings, which is a much more general result. However, it is known that there exist simple embedded 3-connected graphs that have a dual which is simple and not 3-connected. In fact, it was proved by Bokal, Brinkmann, and Zamfirescu that for any $k\geq 1$ there exist $k$-connected graphs with a simple dual that has a 1-cut \cite{bokal2022connectivity}. As Dual is a lopsp-operation --even an lsp-operation-- it follows that not all lopsp-operations always preserve 3-connectivity.
	In this text we prove that Dual  is not the only lopsp-operation with this property: We define the class of edge-breaking operations using an easy condition and prove that these are exactly the operations that do not always preserve 3-connectivity. Most well-known operations however are not edge-breaking, which implies that they always preserve 3-connectivity. More specifically:
	
	\begin{corollary*}
		If $G$ is a simple, 3-connected embedded graph, then $\textit{Ambo}(G)$, $\textit{Truncation}(G)$,\\ $\textit{Capra}(G)$, $\textit{Chamfer}(G)$, $\textit{Snub}(G)$, $\textit{Gyro}(G)$, $\textit{Leapfrog}(G)$ and many more are 3-connected.
	\end{corollary*}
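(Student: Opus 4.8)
The plan is to read the corollary off the characterisation theorem proved above: that theorem says a lopsp-operation sends every simple, 3-connected embedded graph to a 3-connected graph precisely when it is \emph{not} edge-breaking. Consequently the entire corollary reduces to a single, purely local verification, namely that none of Ambo, Truncation, Capra, Chamfer, Snub, Gyro and Leapfrog satisfies the edge-breaking condition. No genus-specific or global argument is needed once the theorem is in hand; everything happens inside the fundamental region that each operation inserts around an edge of the host graph.

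Concretely, I would first fix, for each named operation, an explicit lopsp-tiling realising it, together with its type-$0$/type-$1$/type-$2$ structure. For the achiral operations (Ambo, Truncation, Chamfer, Leapfrog) these are the familiar lsp-tilings read as lopsp-tilings in the sense of \cite{brinkmann2017comparing}; for the chiral ones (Capra, Snub, Gyro, and the chiral Goldberg--Coxeter family) I would use the tilings recorded in \cite{brinkmann2021local, dutour2004goldberg}. Once the tiling and its fundamental edge-region are fixed, the edge-breaking condition can simply be read off, and since it only involves the part of the tiling associated with one edge of the host graph together with its incident corner vertices, the check is a finite inspection in each case. I expect all of these operations to fail the condition, in sharp contrast to Dual: each of them keeps the two sides of an original edge joined by more than one independent connection inside the inserted pattern, whereas Dual (the archetypal edge-breaking operation) collapses that to a single crossing and thereby permits small cuts. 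I would phrase this verification so that it also isolates a convenient sufficient criterion on the tiling, which is exactly what underwrites the ``and many more'' clause: any operation whose tiling meets that criterion is automatically non-edge-breaking and hence 3-connectivity-preserving.

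The main obstacle is not a deep step but careful bookkeeping on the chiral tilings (Snub, Gyro, Capra): one must orient and label the type-$0$/$1$/$2$ roles and identify the fundamental edge-region without error, since a single mislabelling could make an operation spuriously appear edge-breaking, or mask a genuine failure. A secondary point I would check before invoking the theorem is that each operation's tiling meets whatever normalisation the lopsp framework imposes (so that the operation is genuinely a lopsp-operation of the form the theorem addresses) and that the hypotheses on the input graph, simplicity and 3-connectivity, are exactly those assumed in the corollary. With those checks in place, applying the characterisation theorem operation by operation completes the argument.
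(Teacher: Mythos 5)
Your proposal is correct and follows essentially the same route as the paper: the corollary is obtained by combining the characterisation theorem (Theorem~\ref{thm:main_2}, for $c3$-lopsp-operations) with a finite, purely local check that each named operation fails the edge-breaking condition. The only remark worth adding is that this check is even lighter than you anticipate, since by Definition~\ref{def:edge_breaking} it amounts to verifying that $v_1$ and $v_2$ are not adjacent with $v_2$ of type $0$ in the operation's triangulated disc, and that all the named operations are indeed $c3$ and distinct from Dual.
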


	It will become apparent that even within the class of edge-breaking operations Dual is special. It destroys 3-connectivity more easily than other lopsp-operations: If a simple 3-connected embedded graph has a simple dual, then Dual is the only lopsp-operation that may destroy its 3-connectivity.
	The foundations of these results were laid in the master's thesis \cite{vdc2020effect}. In that thesis only the less general lsp-operations were considered.
	
	This article is structured as follows: In Section~\ref{sec:definitions} we will give the definitions that are necessary for our results. It starts with the combinatorial definition of an embedded graph and after that lopsp-operations are defined. Section~\ref{sec:prep} is quite technical. There lemmas are proved that will be used in
	Section~\ref{sec:main} to prove our main theorems. In that section we prove that edge-preserving operations are the only lopsp-operations that preserve 3-connectivity in any embedded graph.

		\section{Definitions}\label{sec:definitions}
		
		\subsection*{Combinatorial definition of embedded graphs}
		
		A graph $G$ consists of a set $V_G$ of vertices and a set $E_G$ of edges. Each edge is \emph{incident} to exactly two vertices, which do not have to be different. An edge that is incident to the same vertex twice is called a \emph{loop}. If more than one edge is incident to the same two vertices these edges are called \emph{multiple edges}. An edge that is incident to vertices $x$ and $y$ is often denoted as $\{x,y\}$. Note that if there are multiple edges, this notation does not represent a unique edge. A graph that does not have loops or multiple edges is called \emph{simple}. To emphasize that a graph may not be simple we sometimes call it a \emph{multigraph}. 
		
		A \emph{walk} is an alternating sequence of vertices and edges of a graph such that each consecutive vertex and edge are incident. The first and last element of the sequence are vertices. If there are no multiple edges or if there is no risk of confusion a walk may be described by its sequence of vertices. A walk is a \emph{path} if it contains each vertex and each edge at most once. A walk is \emph{closed} if its first and last vertex are the same. A \emph{cycle} is a closed walk where only the first and last vertex are the same and all of its other vertices are different. If $v$ and $v'$ are vertices in a walk $P$ we write $P_{v,v'}$ for a segment of $P$ from $v$ to $v'$.
		
		In this text we will almost exclusively consider \emph{embedded graphs}. These are often defined from a topological perspective, but here we will use the purely combinatorial description using rotation systems. In a more topological context embedded graphs are often called \emph{maps}. With every edge we associate two \emph{oriented edges}. One for each incident vertex of the edge. This way every oriented edge has an associated vertex, which is incident to the corresponding edge of the graph. The inverse $e^{-1}$ of an oriented edge $e$ is the other oriented edge that corresponds to the same edge of the graph. A \emph{rotation system} is a permutation $\sigma$ of the set of oriented edges such that for any vertex $v$ of the graph, $\sigma$ induces a cyclic ordering on the oriented edges that have $v$ as their associated vertex. An embedded graph is a connected graph that has at least one edge, together with a rotation system. We will not specify this rotation system in our notation of embedded graphs, as we will only consider one rotation system for each graph. When drawing embedded graphs, the clockwise order of the edges around a vertex is the same as the cyclic order of the edges incident to that vertex induced by the rotation system.
		
		The oriented edges $e$ and $\sigma(e^{-1})$ form an \emph{angle}. A \emph{facial walk} or \emph{face} of an embedded graph is a cyclic sequence of oriented edges such that every two consecutive edges form an angle. Each oriented edge is in exactly one face. Let $F_G$ be the set of faces of the embedded graph $G$. The \emph{genus} of $G$ is given by the formula $\frac{2-|V_G| + |E_G| -|F_G|}{2}$. A graph is \emph{plane} if it has genus 0, i.e.\ if $|V_G| - |E_G| +|F_G|=2$.
		
		A graph $G'$ is a subgraph of a graph $G$ if $V_{G'}\subseteq V_G$ and $E_{G'}\subseteq E_G$. If all the edges of $E_G$ that have both their vertices in $V_{G'}$ are in $E_{G'}$ then $G'$ is an \emph{induced subgraph}. If $G$ is an embedded graph, then $G'$ with the embedding induced by the embedding of $G$ is an \emph{embedded subgraph} of $G$.
		
		We define a \emph{bridge} for a subgraph $G'$ of $G$. There are two kinds of bridges:
		\begin{itemize}
			\item If there is an edge in $G$ that is not in $G'$ but both its incident vertices are, then the embedded subgraph consisting of just this edge and its two vertices is a bridge.
			\item Let $C$ be a component of the subgraph of $G$ induced by the vertices of $G$ that are not in $G'$. The result of adding the edges of $G$ that have one vertex in $G'$ and one vertex in $C$ to $C$, together with their vertices in $G'$ is a bridge.
		\end{itemize}
		
		If edges $e$ and $e'$ form an angle in $G'$ and $e_b$ is an edge in a bridge such that $e$, $e_b$ and $e'$ appear in that order in the cyclic order around a vertex of $G'$, then the bridge is \emph{in} the angle $e$, $e'$ and the face containing it. All the vertices and edges in the bridge are also \emph{in} that face. To distinguish between vertices and edges in a bridge and vertices in the actual face, we say that the vertices and edges that are in the closed walk corresponding to the cyclic sequence of oriented edges are in the \emph{boundary} of the face. Vertices and edges that are in the face but not in the boundary are in the \emph{interior} of the face. If a bridge is in more than one face then those faces are \emph{bridged}. If a face is not bridged then it is \emph{simple}.
		
		If $f$ is a simple face in an embedded subgraph $G'$ of an embedded graph $G$, we will define the \emph{internal component} $IC(f)$. Informally, the internal component of $f$ is the embedded graph consisting of vertices and edges in $f$ that can be obtained by cutting along the facial walk. An example is shown in Figure~\ref{fig:IC}. More formally: Let $C$ be a cycle that has a vertex or edge for each vertex or edge in the walk $f$ respectively. If a vertex or edge appears more than once in the walk then for every appearance there is a different vertex or edge in $C$. The cycle $C$ has two faces. If we regard all the different copies of a vertex that appears more than once in $f$ as that one vertex of $f$, then for one face $f_C$ of $C$, the order of the vertices is the same as that of $f$. The other face of $C$ has those vertices in the reverse order. For every angle of $f$ there is exactly one angle in $f_C$ that corresponds to it. Now glue a copy of each bridge of $f$ into $f_C$ such that it is in the angles of $f_C$ corresponding to the angles of $f$ that the original bridge was in.	
		The result of this is $IC(f)$. 

		\begin{figure}
			\centering
			\scalebox{0.8}{\begin{tikzpicture}[scale=0.7, rotate = 180]
\tikzset{every node/.style={shape=circle, draw=black, scale=0.35, fill=black}}
\tikzset{H/.style={ultra  thick, draw=black}}
\tikzset{red/.style={thick, draw=red}}
\tikzset{rednode/.style={fill=red, draw=red}}

\node (a1) at (0,1) {};
\node (a2) at ({sin(72)}, {cos(72)}) {};
\node (a3) at ({sin(36)}, {-cos(36)}) {};
\node (a5) at ({-sin(72)}, {cos(72)}) {};
\node (a4) at ({-sin(36)}, {-cos(36)}) {};

\node (b1) at (0,2) {};
\node (b2) at ({2*sin(72)}, {2*cos(72)}) {};
\node (b3) at ({2*sin(36)}, {-2*cos(36)}) {};
\node (b5) at ({-2*sin(72)}, {2*cos(72)}) {};
\node (b4) at ({-2*sin(36)}, {-2*cos(36)}) {};

\node (c1) at (0,-2.5) {};
\node (c2) at ({2.5*sin(72)}, {-2.5*cos(72)}) {};
\node (c3) at ({2.5*sin(36)}, {2.5*cos(36)}) {};
\node (c5) at ({-2.5*sin(72)}, {-2.5*cos(72)}) {};
\node (c4) at ({-2.5*sin(36)}, {2.5*cos(36)}) {};

\node (d1) at (0,-3.5) {};
\node (d2) at ({3.5*sin(72)}, {-3.5*cos(72)}) {};
\node (d3) at ({3.5*sin(36)}, {3.5*cos(36)}) {};
\node (d5) at ({-3.5*sin(72)}, {-3.5*cos(72)}) {};
\node (d4) at ({-3.5*sin(36)}, {3.5*cos(36)}) {};

\begin{scope}[very thin]
\draw[H] (a1) -- (a2) -- (a3) -- (a4) -- (a5) -- (a1);
\draw (d1) -- (d2) -- (d3);
\draw[H] (d3) -- (d4);
\draw (d4) -- (d5) -- (d1);
\draw[H] (c3)  -- (b2) -- (c2) -- (b3) -- (c1) -- (b4) -- (c5) -- (b5) -- (c4);
\draw  (c4) -- (b1) -- (c3);
\draw (a1) -- (b1)
	(a2) -- (b2)
	(a3) -- (b3)
	(a4) -- (b4)
	(a5) edge[H] (b5);
	\draw (c1) -- (d1)
	(c2) -- (d2)
	(c3) edge[H] (d3)
	(c4) edge[H] (d4)
	(c5) -- (d5);
\end{scope}

\end{tikzpicture}}
			\qquad \qquad
			\scalebox{0.8}{\begin{tikzpicture}[scale=0.7, rotate = 180]
\tikzset{every node/.style={shape=circle, draw=black, scale=0.35, fill=black}}
\tikzset{H/.style={ultra  thick, draw=black}}
\tikzset{red/.style={thick, draw=red}}
\tikzset{rednode/.style={fill=red, draw=red}}

\node (a1) at (0,1) {};
\node (a2) at ({sin(72)}, {cos(72)}) {};
\node (a3) at ({sin(36)}, {-cos(36)}) {};
\node (a5) at ({-sin(72)}, {cos(72)}) {};
\node (a5') at ({-0.85*sin(72)}, {-0.5*cos(72)}) {};
\node (a4) at ({-sin(36)}, {-cos(36)}) {};

\node (b1) at (0,2) {};
\node (b2) at ({2*sin(72)}, {2*cos(72)}) {};
\node (b3) at ({2*sin(36)}, {-2*cos(36)}) {};
\node (b5) at ({-2*sin(72)}, {2*cos(72)}) {};
\node (b5') at ({-2.2*sin(72)}, {0.3*cos(72)}) {};
\node (b4) at ({-2*sin(36)}, {-2*cos(36)}) {};

\node (c1) at (0,-2.5) {};
\node (c2) at ({2.5*sin(72)}, {-2.5*cos(72)}) {};
\node (c3) at ({2.5*sin(36)}, {2.5*cos(36)}) {};
\node (c5) at ({-2.5*sin(72)}, {-2.5*cos(72)}) {};
\node (c4) at ({-2.5*sin(36)}, {2.5*cos(36)}) {};

\node[fill = none, draw = none] (d1) at (0,-3.5) {};
\node (d3) at ({3.5*sin(36)}, {3.5*cos(36)}) {};

\node (d4) at ({-3.5*sin(36)}, {3.5*cos(36)}) {};

\begin{scope}[very thin]
\draw[H] (a1) -- (a2) -- (a3) -- (a4) --(a5') (a5) -- (a1);

\draw[H] (d3) -- (d4);

\draw[H] (c3)  -- (b2) -- (c2) -- (b3) -- (c1) -- (b4) -- (c5) -- (b5') (b5)-- (c4);
\draw  (c4) -- (b1) -- (c3);
\draw (a1) -- (b1)
	(a2) -- (b2)
	(a3) -- (b3)
	(a4) -- (b4)
	(a5') edge[H] (b5')
	(a5) edge[H] (b5);
	\draw
	(c3) edge[H] (d3)
	(c4) edge[H] (d4);
\end{scope}

\end{tikzpicture}}
			\caption{On the left a graph is shown with a subgraph that is drawn with thicker edges. On the right the internal component of one of the faces of the subgraph is shown.}
			\label{fig:IC}
		\end{figure}
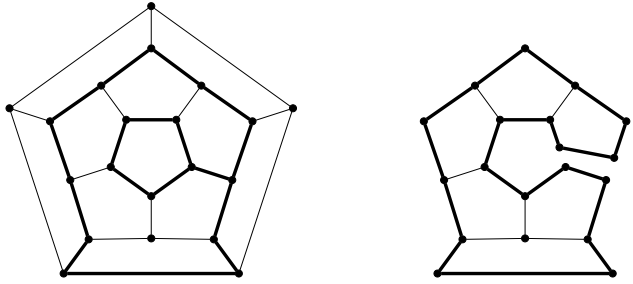

Next we define the barycentric subdivision of an embedded graph. It captures the topological structure of the embedding in a combinatorial way. Barycentric subdivisions and double chambers are used frequently throughout this text. 

	\begin{definition}	\label{def:bary}		
		The \emph{barycentric subdivision} $B_G$ of an embedded graph $G$ is an embedded graph with vertex set $V_G\cup E_G\cup F_G$. The vertices in $V_G$ are said to be of type 0, the vertices in $E_G$ are of type 1 and the vertices in $F_G$ are of type 2. Vertices of types 0 and 1 are adjacent in $B_G$ if the corresponding vertex and edge of $G$ are incident. A vertex of type 2 is adjacent to a vertex of type 0 or 1 if the boundary of the corresponding face of $G$ contains the corresponding vertex or edge. Vertices of the same type are not adjacent, which allows to define the \emph{type} of an edge as follows: An edge is of type $i\in\{0,1,2\}$ if its vertices are not of type $i$. To simplify notation we will often use the same names for the vertices, edges and faces of $G$ and their corresponding vertices of $B_G$. The graph $B_G$ is embedded in the obvious way such that all faces of $B_G$ are triangles and $gen(B_G)=gen(G)$. A part of a barycentric subdivision is shown in Figure~\ref{fig:barycentric}. In drawings, vertices and edges of types 0, 1 and 2 are colored red, green and black respectively. Edges of type 1 are dashed and edges of type 2 are dotted. 
	
		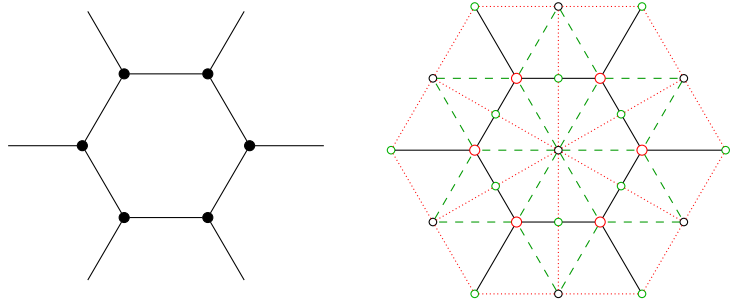
\begin{figure}
			\centering
			\begin{tikzpicture}[scale=1.1]
\tikzset{normal/.style={shape=circle, draw=black, scale=0.4, fill=black}}
\tikzset{type2/.style={shape=circle, draw=red, scale=0.5}}
\tikzset{noNode/.style={draw=none}}
\tikzset{0edge/.style={draw=red}}
\node[normal] (v1) at (-1,0) {};
\node[normal] (v4) at (1,0) {};
\node[normal] (v6) at ({cos(120)}, {-sin(120)}) {};
\node[normal] (v3) at ({cos(60)}, {sin(60)}) {};
\node[normal] (v5) at ({cos(60)}, {-sin(60)}) {};
\node[normal] (v2) at ({cos(120)}, {sin(120)}) {};

\node [noNode] (v7) at (-2, 0) {};
\node [noNode] (v8) at ({2*cos(120)}, {2*sin(120)}) {};
\node [noNode] (v9) at ({2*cos(60)}, {2*sin(60)}) {};
\node [noNode] (v10) at (2, 0) {};
\node [noNode] (v11) at ({2*cos(60)}, {-2*sin(60)}) {};
\node [noNode] (v12) at ({2*cos(120)}, {-2*sin(120)}) {};
\begin{scope}[]
\draw (v1) edge (v7);
\draw (v2) edge (v8);
\draw (v3) edge (v9);
\draw (v4) edge (v10);
\draw (v5) edge (v11);
\draw (v6) edge (v12);
\draw (v1) -- (v2) -- (v3) -- (v4) -- (v5) -- (v6) -- (v1);
\end{scope}
\end{tikzpicture}
			\quad
			\begin{tikzpicture}[scale=1.1]
\tikzset{normal/.style={shape=circle, draw=red, scale=0.4}}
\tikzset{type1/.style={shape=circle, draw=black!30!green, scale=0.3}}
\tikzset{type2/.style={shape=circle, draw=black, scale=0.3}}
\tikzset{0edge/.style={draw=red, densely dotted}}
\tikzset{1edge/.style={draw=black!40!green, dashed}}
\tikzset{2edge/.style={}}

\node[normal] (v1) at (-1,0) {};
\node[normal] (v4) at (1,0) {};
\node[normal] (v6) at ({cos(120)}, {-sin(120)}) {};
\node[normal] (v3) at ({cos(60)}, {sin(60)}) {};
\node[normal] (v5) at ({cos(60)}, {-sin(60)}) {};
\node[normal] (v2) at ({cos(120)}, {sin(120)}) {};

\node[type1] (e1) at ({(cos(120)-1)/2}, {sin(60)/2}) {};
\node[type2] (f1) at ({(cos(120)-1)}, {sin(60)}) {};
\node[type1] (e2) at (0, {sin(60)}) {};
\node[type2] (f2) at (0, {2*sin(60)}) {};
\node[type1] (e3) at({(1 - cos(120))/2}, {sin(60)/2}) {};
\node[type2] (f3) at ({(-cos(120)+1)}, {sin(60)}) {};
\node[type1] (e4) at({(1 - cos(120))/2}, {-sin(60)/2}) {};
\node[type2] (f4) at ({(-cos(120)+1)}, {-sin(60)}) {};
\node[type1] (e5) at({(0}, {-sin(60)}) {};
\node[type2] (f5) at (0, {-2*sin(60)}) {};
\node[type1] (e6) at ({(cos(120)-1)/2}, {-sin(60)/2}) {};
\node[type2] (f6) at ({(cos(120)-1)}, {-sin(60)}) {};

\node[type2] (f) at (0, 0) {};

\node [type1] (v7) at (-2, 0) {};
\node [type1] (v8) at ({2*cos(120)}, {2*sin(120)}) {};
\node [type1] (v9) at ({2*cos(60)}, {2*sin(60)}) {};
\node [type1] (v10) at (2, 0) {};
\node [type1] (v11) at ({2*cos(60)}, {-2*sin(60)}) {};
\node [type1] (v12) at ({2*cos(120)}, {-2*sin(120)}) {};

\begin{scope}[0edge]
\draw (f) -- (e1)--(f1);
\draw (f) -- (e2)--(f2);
\draw (f) -- (e3)--(f3);
\draw (f) -- (e4)--(f4);
\draw (f) -- (e5)--(f5);
\draw (f) -- (e6)--(f6);
\draw (f1)--(v8)--(f2)--(v9)--(f3)--(v10)--(f4)--(v11)--(f5)--(v12)--(f6)--(v7)--(f1);
\end{scope}

\begin{scope}[1edge]
\draw (f) edge (v1);
\draw (f) edge (v2);
\draw (f) edge (v3);
\draw (f) edge (v4);
\draw (f) edge (v5);
\draw (f) edge (v6);
\draw (v1)--(f1)--(v2)--(f2)--(v3)--(f3)--(v4)--(f4)--(v5)--(f5)--(v6)--(f6)--(v1);
\end{scope}

\begin{scope}[2edge]
\draw (v1) -- (e1) -- (v2) -- (e2) -- (v3) -- (e3) -- (v4) -- (e4) -- (v5) -- (e5) -- (v6) -- (e6) -- (v1);
\draw  (v1) edge (v7);
\draw  (v2) edge (v8);
\draw (v3) edge (v9);
\draw (v4) edge (v10);
\draw (v5) edge (v11);
\draw (v6) edge (v12);
\end{scope}

\end{tikzpicture}
			\caption{A face of an embedded graph $G$ and the corresponding part of $B_G$. }
			\label{fig:barycentric}
		\end{figure}

		Every face of $B_G$ is a triangle that has one vertex of each type. The faces of $B_G$ are called \emph{chambers}. In the literature these are also called \emph{flags}. The vertex of type $i$ in a chamber is called the \emph{$i$-vertex} of that chamber. The edge of type $i$ is the \emph{$i$-edge}.
		
		The subgraph of $B_G$ consisting of all its vertices and only the edges of types 1 and 2 is called $D_G$. The faces of this graph are called \emph{double chambers} because they can be considered the union of two chambers of $B_G$ that share their 0-edge. To avoid confusion with chambers, we call the 0-, 1- and 2-vertices of a double chamber the \emph{0-}, \emph{1-} and \emph{2-points}. The 1-edges are called \emph{1-sides} and the union of the two 2-edges and their shared vertex is called the \emph{2-side} of the double chamber.
	\end{definition}

\subsection*{lopsp-operations}
In the supplemental material of \cite{brinkmann2017comparing}, lopsp-operations are defined as patches that are cut out of tilings. To apply a lopsp-operation a copy of the patch is then glued into every double chamber. Instead of this definition using tilings, we repeat the definition of lopsp-operations from \cite{brinkmann2021local} here. This definition is very general and allows operations that for example introduce vertices of degree 1 or 2. In this text we will mostly restrict ourselves to $c3$-operations (Definition~\ref{def:c3}). Such operations are associated with 3-connected tilings. In practice, all well-known and used lopsp-operations are $c3$-operations. 

\begin{definition}\label{def:lopsp}
	Let $O$ be a 2-connected plane embedded multigraph with vertex set $V$, together with a labelling function $t: V \rightarrow \{0,1,2\}$ and three special vertices marked as $v_0$, $v_1$, and $v_2$. We say that a vertex is of \emph{type} $i$ if $t(v)=i$. 
	This embedded graph $O$ is a \emph{local orientation-preserving symmetry-preserving operation}, lopsp-operation for short, if the following properties hold:
	\begin{enumerate}[(1)]
		\item Every face is a triangle.
		\item There are no edges between vertices of the same type.
		\item We have
		\[t(v_0),t(v_2)\neq 1\]
		\[t(v_1)=1 \Rightarrow deg(v_1)=2\]
		and for each vertex $v$ different from $v_0$, $v_1$, and $v_2$:
			\[t(v)=1 \Rightarrow deg(v)=4\]
	\end{enumerate}
	
	\noindent
	We say that an edge is of \emph{type $i$} if it is not incident to a vertex of type $i$. This is well-defined because of (2). Note that the edges incident
	with a vertex are of two different types, and as every face is a triangle, these types appear in alternating order in the cyclic order of edges around the vertex. This implies that every
	vertex has an even degree. The requirement that $O$ is 2-connected is mentioned in the beginning, but would in fact also follow from the other conditions.
	
	Every face has exactly one vertex and one edge of each type. We will also call these faces \emph{chambers}.
\end{definition}

\begin{figure}
	\centering
	
	\scalebox{0.8}{\begin{tikzpicture}[scale=2]
\tikzset{type0/.style={shape=circle, draw=red, scale=0.4, fill=white}}
\tikzset{type1/.style={shape=circle, draw=black!30!green, scale=0.4, fill=white}}
\tikzset{type2/.style={shape=circle, draw=black, scale=0.4, fill=white}}
\tikzset{t0/.style={draw=red, densely dotted}}
\tikzset{t1/.style={draw=black!40!green, dashed}}
\tikzset{t2/.style={}}

\node[type0, scale=1.3,  label={[label distance=-0.02cm]90:{${v_0}$}}] (v0) at (0.5, 0) {};
\node[type1, scale=1.3, label={[label distance=-0.06cm]-90:{$v_1$}}] (v1) at (0.25, -0.5) {};
\node[type0, scale=1.3, label={[label distance=-0.02cm]90:{$v_2$}}] (v2) at (0, 1) {};
\node[type1] (e) at (0, 0) {};
\node[type0] (v) at (-0.5, 0) {};
\node[type2] (f) at (1, 0) {};
\node[type1] (E) at (-1, 0) {};

\begin{scope}[t0]
\draw (f)  edge[bend left=90, looseness=1.5] (E) ;
\draw (f)  edge[bend right=85] (E) ;
\draw (f)  edge[bend left=50] (e) ;
\draw (e)  edge[bend left=50] (f) ;
\draw (f)  edge[bend left=30] (v1) ;
\end{scope}

\begin{scope}[t1]
\draw[ultra thick] (v0) -- (f);
\draw[ultra thick] (f)  edge[bend right=45] (v2) ;
\draw (v)  edge[bend left=70] (f) ;
\draw (v)  edge[bend right=80, looseness=1.5] (f) ;
\draw (v)  edge[bend right=55, looseness=1] (f) ;
\end{scope}

\begin{scope}[t2]
\draw[ultra thick] (v0) -- (e) -- (v) ;
\draw (v) -- (E);
\draw (v2)  edge[bend right=45] (E) ;
\draw[ultra thick] (v1)  edge[bend left=30] (v) ;
\end{scope}

\end{tikzpicture}} \qquad
	\scalebox{0.8}{\begin{tikzpicture}[scale=3.7]
\tikzset{type0/.style={shape=circle, draw=red, scale=0.4, fill=white}}
\tikzset{type1/.style={shape=circle, draw=black!30!green, scale=0.4, fill=white}}
\tikzset{type2/.style={shape=circle, draw=black, scale=0.4, fill=white}}
\tikzset{t0/.style={draw=red, densely dotted}}
\tikzset{t1/.style={draw=black!40!green, dashed}}
\tikzset{t2/.style={}}

\node[type0, scale=1.3,  label={[label distance=-0.02cm]90:{$v_2$}}] (v) at (0, 0) {};


\node[type1] (e14) at ({cos(50+4*60)*cos(15)/cos(20)}, {sin(50+4*60)*cos(15)/cos(20)}) {};
\node[type0] (v14) at ({cos(40+4*60)*cos(15)/cos(10)}, {sin(40+4*60)*cos(15)/cos(10)}) {};
\node[type1,  scale=1.3, label={[label distance=-0.02cm]-90:{$v_1$}}] (e24) at ({cos(30+4*60)*cos(15)}, {sin(30+4*60)*cos(15)}) {};
\node[type0] (v24) at ({cos(20+4*60)*cos(15)/cos(10)}, {sin(20+4*60)*cos(15)/cos(10)}) {};
\node[type1] (e34) at ({cos(10+4*60)*cos(15)/cos(20)}, {sin(10+4*60)*cos(15)/cos(20)}) {};
\node[type0,   scale=1.3,  label={[label distance=-0.02cm]120:{$v_{0,L}$}}] (v34) at ({cos(4*60)*(cos(15)/cos(30))}, {sin(4*60)*(cos(15)/cos(30))}) {};
\node[type1] (e44) at ({cos(40+4*60)*cos(15)/cos(10)/2}, {sin(40+4*60)*cos(15)/cos(10)/2}) {};
\node[type2] (f4) at ({cos(4*60)*(cos(15)/cos(30))/2}, {sin(4*60)*(cos(15)/cos(30))/2}) {};

\node[type2] (f5) at ({cos(5*60)*(cos(15)/cos(30))/2}, {sin(5*60)*(cos(15)/cos(30))/2}) {};
\node[type0,  scale=1.3,  label={[label distance=-0.02cm]60:{$v_{0,R}$}}] (v35) at ({cos(5*60)*(cos(15)/cos(30))}, {sin(300)*(cos(15)/cos(30))}) {};

\draw[t2,ultra thick] (v35) -- (e14) -- (v14) --(e24) ;
\draw[t2,ultra thick] (e24) -- (v24) -- (e34) -- (v34) ;
\draw[t2] (v14) -- (e44)--(v);
\draw[t1] (v14) -- (f4) -- (v24);
\draw[t1,ultra thick] (v) -- (f4) -- (v34);
\draw[t0] (e24) -- (f4) -- (e34);
\draw[t0] (f4) -- (e44);
\draw[t0] (e14) -- (f5) -- (e44);
\draw[t1] (v14) -- (f5);

\draw[t1,ultra thick] (v) -- (f5) -- (v35);

\node[draw=white] (k) at (0.5,-1.1) {};

\end{tikzpicture}}
	\caption{\label{fig:lopsp_gyro}On the left, the lopsp-operation gyro is shown. The thicker edges are the edges of a cut-path $P$. On the right the corresponding double chamber patch $O_P$ is drawn. The two copies of $P$ in $O_P$ are drawn thicker.}
\end{figure}
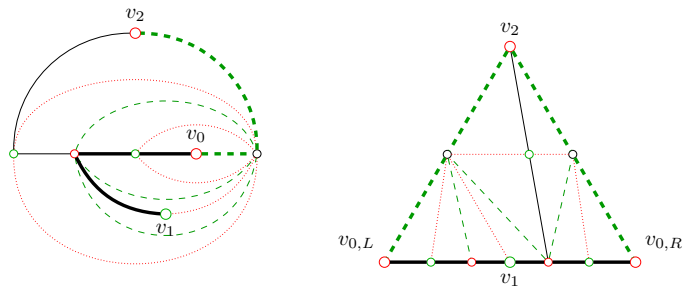

{\bf Application of a lopsp-operation:} Before applying a lopsp-operation $O$ a \emph{cut-path} $P$ needs to be chosen. This is a (simple) path in $O$ from $v_1$ to $v_2$ that contains $v_0$. As lopsp-operations are 2-connected, Menger's theorem implies that such a path exists for any lopsp-operation $O$. The unique face of $P$ as a subgraph of $O$ is simple. The internal component of this face is called the \emph{double chamber patch} $O_P$ of $O$ with respect to the cut-path $P$. This plane graph contains two copies of the path $P$. Together they form the \emph{outer face} of $O_P$. As endpoints, the vertices $v_1$ and $v_2$ appear in both copies of $P$ and therefore there is only one copy of each of them in $O_P$. However, there are two copies $v_{0,L}$ and $v_{0,R}$ of $v_0$ in $O_P$. Figure~\ref{fig:lopsp_gyro} shows the lopsp-operation gyro on the left and a double chamber patch of gyro on the right.

To apply $O$ to an embedded graph $G$ the edges of $D_G$ are replaced by copies of $P_{v_0,v_1}$ and $P_{v_0,v_2}$, and then a copy of $O_P$ is glued into every face of that graph. An edge of type $i$ is replaced by a copy of $P_{v_0,v_i}$ such that the vertex of type $j$ is identified with the copy of $v_j$. The result is an embedded graph $B_{O(G)}$ which is the barycentric subdivision of another embedded graph $O(G)$. This graph $O(G)$ is the result of applying $O$ to $G$. Note that this notation contains no reference to the chosen cut-path $P$. This is justified by results in \cite{brinkmann2021local} where it is proved that the result of a lopsp-operation is independent of the chosen cut-path.
In that article a map $\pi$ is also defined. We repeat that definition here, together with the definition of sides and points in $B_{O(G)}$.

\begin{definition}\label{def:pi}
	As $B_{O(G)}$ is obtained by gluing copies of $O_P$ together, every vertex, edge, or face of $B_{O(G)}$ is a copy of exactly one vertex, edge, or face of $O$ respectively. The surjective map $\pi$ maps every vertex, edge and face of $B_{O(G)}$ to its
	corresponding vertex, edge, or face of $O$. 
	
	The map $\pi^{-1}$ is defined for subgraphs of $O$. If $H$ is a subgraph of $O$, then $\pi^{-1}(H)$ is the subgraph of $B_{O(G)}$ consisting of all vertices and edges whose image under $\pi$ is in $H$. If $\pi^{-1}(H)$ is connected, then it is an embedded subgraph of $B_{O(G)}$ with the embedding induced by the embedding of $B_{O(G)}$.
	
	For $i\in \{0,1,2\}$ an \emph{$i$-point} in $B_{O(G)}$ is a vertex $v$ with $\pi(v)=v_i$. When applying a lopsp-operation with a cut-path $P$, a copy of $P_{v_0,v_2}$ in $B_{O(G)}$ between a 0-point and a 2-point is a \emph{1-side}. Two copies of $P_{v_0,v_1}$ in $B_{O(G)}$ with the same 1-point are a \emph{2-side}. Note that each of these points and sides corresponds to exactly one point or side of $D_G$.
\end{definition}

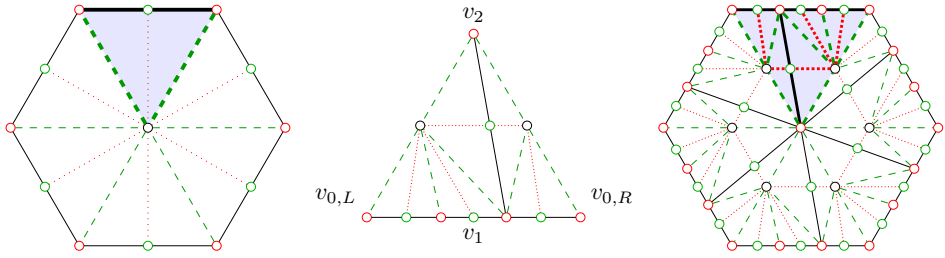
\begin{figure}
	\centering
	\scalebox{0.9}{\begin{tikzpicture}[scale=1.8]
\tikzset{type0/.style={shape=circle, draw=red, scale=0.4, fill=white}}
\tikzset{type1/.style={shape=circle, draw=black!30!green, scale=0.4, fill=white}}
\tikzset{type2/.style={shape=circle, draw=black, scale=0.4, fill=white}}
\tikzset{0edge/.style={draw=red, dotted}}
\tikzset{1edge/.style={draw=black!40!green, dashed}}
\tikzset{2edge/.style={}}

\fill[blue!10!white] (0,0) -- ({-cos(15)*tan(30)}, {cos(15)})-- (0, {cos(15)}) -- ({cos(15)*tan(30)}, {cos(15)}) -- cycle;

\node[type2] (v2) at (0, 0) {};

\node[type1] (v1) at (0, {cos(15)}) {};
\node[type1] (v11) at ({cos(15) *cos(30)}, {cos(15)*sin(30)}) {};
\node[type1] (v12) at ({cos(15) *cos(30)}, {-cos(15)*sin(30)}) {};
\node[type1] (v13) at (0, {-cos(15)}) {};
\node[type1] (v14) at ({-cos(15) *cos(30)}, {-cos(15)*sin(30)}) {};
\node[type1] (v15) at ({-cos(15) *cos(30)}, {cos(15)*sin(30)}) {};

\node[type0] (v0) at ({cos(15)*tan(30)}, {cos(15)}) {};
\node[type0] (v01) at ({cos(15)/sin(60)}, 0) {};
\node[type0] (v02) at ({cos(15)*tan(30)}, {-cos(15)}) {};
\node[type0] (v03) at ({-cos(15)*tan(30)}, {-cos(15)}) {};
\node[type0] (v04) at ({-cos(15)/sin(60)}, 0) {};
\node[type0] (v05) at ({-cos(15)*tan(30)}, {cos(15)}) {};

\begin{scope}[0edge]
\draw[] (v1) -- (v2) ;
\draw (v2) -- (v11);
\draw (v12) -- (v2) -- (v13);
\draw (v14) -- (v2) -- (v15);
\end{scope}

\begin{scope}[2edge]
\draw[ultra thick] (v05)--(v1) -- (v0);
\draw (v0) --(v11)--(v01) -- (v12) -- (v02) -- (v13) 
-- (v03) -- (v14) -- (v04) -- (v15) -- (v05);
\end{scope}

\begin{scope}[1edge]
\draw[ultra thick] (v0) -- (v2) --(v05);
\draw (v2) -- (v01);
\draw (v02) -- (v2) -- (v03);
\draw (v04) -- (v2);
\end{scope}

\end{tikzpicture}} 
	\scalebox{0.9}{\begin{tikzpicture}[scale=2.8]
\tikzset{type0/.style={shape=circle, draw=red, scale=0.4, fill=white}}
\tikzset{type1/.style={shape=circle, draw=black!30!green, scale=0.4, fill=white}}
\tikzset{type2/.style={shape=circle, draw=black, scale=0.4, fill=white}}
\tikzset{t0/.style={draw=red, densely dotted}}
\tikzset{t1/.style={draw=black!40!green, dashed}}
\tikzset{t2/.style={}}

\node[type0,   label={[label distance=-0.02cm]90:{$v_2$}}] (v) at (0, 0) {};


\node[type1] (e14) at ({cos(50+4*60)*cos(15)/cos(20)}, {sin(50+4*60)*cos(15)/cos(20)}) {};
\node[type0] (v14) at ({cos(40+4*60)*cos(15)/cos(10)}, {sin(40+4*60)*cos(15)/cos(10)}) {};
\node[type1, label={[label distance=-0.02cm]-90:{$v_1$}}] (e24) at ({cos(30+4*60)*cos(15)}, {sin(30+4*60)*cos(15)}) {};
\node[type0] (v24) at ({cos(20+4*60)*cos(15)/cos(10)}, {sin(20+4*60)*cos(15)/cos(10)}) {};
\node[type1] (e34) at ({cos(10+4*60)*cos(15)/cos(20)}, {sin(10+4*60)*cos(15)/cos(20)}) {};
\node[type0,  label={[label distance=-0.02cm]120:{$v_{0,L}$}}] (v34) at ({cos(4*60)*(cos(15)/cos(30))}, {sin(4*60)*(cos(15)/cos(30))}) {};
\node[type1] (e44) at ({cos(40+4*60)*cos(15)/cos(10)/2}, {sin(40+4*60)*cos(15)/cos(10)/2}) {};
\node[type2] (f4) at ({cos(4*60)*(cos(15)/cos(30))/2}, {sin(4*60)*(cos(15)/cos(30))/2}) {};

\node[type2] (f5) at ({cos(5*60)*(cos(15)/cos(30))/2}, {sin(5*60)*(cos(15)/cos(30))/2}) {};
\node[type0,  label={[label distance=-0.02cm]60:{$v_{0,R}$}}] (v35) at ({cos(5*60)*(cos(15)/cos(30))}, {sin(300)*(cos(15)/cos(30))}) {};

\draw[t2] (v35) -- (e14) -- (v14) --(e24) ;
\draw[t2] (e24) -- (v24) -- (e34) -- (v34) ;
\draw[t2] (v14) -- (e44)--(v);
\draw[t1] (v14) -- (f4) -- (v24);
\draw[t1] (v) -- (f4) -- (v34);
\draw[t0] (e24) -- (f4) -- (e34);
\draw[t0] (f4) -- (e44);
\draw[t0] (e14) -- (f5) -- (e44);
\draw[t1] (v14) -- (f5);

\draw[t1] (v) -- (f5) -- (v35);

\end{tikzpicture}} 
	\scalebox{0.9}{\begin{tikzpicture}[scale=1.8]
\tikzset{type0/.style={shape=circle, draw=red, scale=0.4, fill=white}}
\tikzset{type1/.style={shape=circle, draw=black!30!green, scale=0.4, fill=white}}
\tikzset{type2/.style={shape=circle, draw=black, scale=0.4, fill=white}}
\tikzset{t0/.style={draw=red, densely dotted}}
\tikzset{t1/.style={draw=black!40!green, dashed}}
\tikzset{t2/.style={}}

\fill[blue!10!white] (0,0) -- ({-cos(15)*tan(30)}, {cos(15)}) -- (0, {cos(15)}) -- ({cos(15)*tan(30)}, {cos(15)}) -- cycle;

\node[type0] (v) at (0, 0) {};

\foreach \i in {0,...,5}
{
\node[type1] (e1\i) at ({cos(50+\i*60)*cos(15)/cos(20)}, {sin(50+\i*60)*cos(15)/cos(20)}) {};
\node[type0] (v1\i) at ({cos(40+\i*60)*cos(15)/cos(10)}, {sin(40+\i*60)*cos(15)/cos(10)}) {};
\node[type1] (e2\i) at ({cos(30+\i*60)*cos(15)}, {sin(30+\i*60)*cos(15)}) {};
\node[type0] (v2\i) at ({cos(20+\i*60)*cos(15)/cos(10)}, {sin(20+\i*60)*cos(15)/cos(10)}) {};
\node[type1] (e3\i) at ({cos(10+\i*60)*cos(15)/cos(20)}, {sin(10+\i*60)*cos(15)/cos(20)}) {};
\node[type0] (v3\i) at ({cos(\i*60)*(cos(15)/cos(30))}, {sin(\i*60)*(cos(15)/cos(30))}) {};
\node[type1] (e4\i) at ({cos(40+\i*60)*cos(15)/cos(10)/2}, {sin(40+\i*60)*cos(15)/cos(10)/2}) {};
\node[type2] (f\i) at ({cos(\i*60)*(cos(15)/cos(30))/2}, {sin(\i*60)*(cos(15)/cos(30))/2}) {};
}

\foreach \j / \i in {1/0,4/3,5/4,0/5}
{
\draw[t2] (v3\j) -- (e1\i) -- (v1\i) --(e2\i) -- (v2\i) -- (e3\i) -- (v3\i) ;
\draw[t2] (v1\i) -- (e4\i)--(v);
\draw[t1] (v1\i) -- (f\i) -- (v2\i);
\draw[t1] (v) -- (f\i) -- (v3\i);
\draw[t0] (e2\i) -- (f\i) -- (e3\i);
\draw[t0] (f\i) -- (e4\i);
\draw[t0] (e1\i) -- (f\j) -- (e4\i);
\draw[t1] (v1\i) -- (f\j);
}

\foreach \j / \i in {2/1}
{
\begin{scope}[very thick]
\draw[t2,very thick] (v3\j) -- (e1\i) -- (v1\i) --(e2\i) -- (v2\i) -- (e3\i) -- (v3\i) ;
\draw[t2,very thick] (v1\i) -- (e4\i)--(v);
\draw[t1,very thick] (v1\i) -- (f\i) -- (v2\i);
\draw[t1,very thick] (v) -- (f\i) -- (v3\i);
\draw[t0,very thick] (e2\i) -- (f\i) -- (e3\i);
\draw[t0,very thick] (f\i) -- (e4\i);
\draw[t0,very thick] (e1\i) -- (f\j) -- (e4\i);
\draw[t1,very thick] (v1\i) -- (f\j);
\end{scope}
}

\foreach \j / \i in {3/2}
{
\draw[t2] (v3\j) -- (e1\i) -- (v1\i) --(e2\i) -- (v2\i) -- (e3\i) -- (v3\i) ;
\draw[t2] (v1\i) -- (e4\i)--(v);
\draw[t1] (v1\i) -- (f\i) -- (v2\i);
\draw[t1, very thick] (v) -- (f2) -- (v32);
\draw[t0] (e2\i) -- (f\i) -- (e3\i);
\draw[t0] (f\i) -- (e4\i);
\draw[t0] (e1\i) -- (f\j) -- (e4\i);
\draw[t1] (v1\i) -- (f\j);
}

\end{tikzpicture}}
	
	\caption{\label{fig:gyro_applied}On the left, the barycentric subdivision of a hexagonal face is shown. On the right, the lopsp-operation gyro -- depicted in the middle -- is applied to it. The blue shaded area shows one double chamber.}
\end{figure}

The most prominent lopsp-operation is without doubt Dual. As it will play a special role in this article we describe it in more detail. For an embedded graph $G$, the dual $G^*$ of $G$ is the graph that has the set of faces of $G$ as its set of vertices, and two vertices are adjacent if and only if the corresponding faces of $G$ share an edge. The vertices of $G$ become the faces in $G^*$. In short, Dual switches the roles of faces and vertices. The lopsp-operation Dual is shown in Figure~\ref{fig:dual}. It has only three vertices, three edges and two faces. The vertices $v_0$, $v_1$ and $v_2$ are of types 2, 1 and 0 respectively. It follows that if we ignore the labelling, $B_{G^*}$ and $B_G$ are the same embedded graph. The only difference is that the types of the vertices of types 0 and 2 are switched.

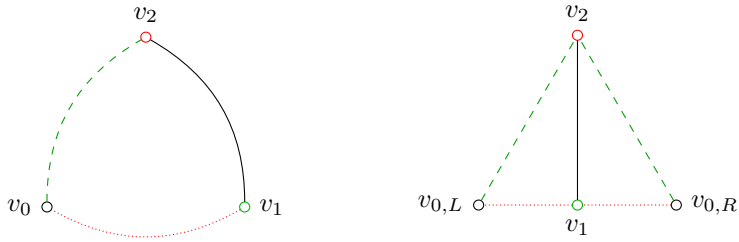
\begin{figure}
	\centering
	\begin{tikzpicture}[scale=1.3, baseline = -14pt]
\tikzset{type0/.style={shape=circle, draw=red, scale=0.4, fill=white}}
\tikzset{type1/.style={shape=circle, draw=black!30!green, scale=0.4, fill=white}}
\tikzset{type2/.style={shape=circle, draw=black, scale=0.4, fill=white}}
\tikzset{t0/.style={draw=red, densely dotted}}
\tikzset{t1/.style={draw=black!40!green, dashed}}
\tikzset{t2/.style={}}

\node[type2, label={[label distance=-0.0cm]180:$v_0$}] (v0) at (-1, 0) {};
\node[type1, label={[label distance=-0.0cm]0:$v_1$}] (v1) at (1,0) {};
\node[type0, label={[label distance=-0.0cm]90:$v_2$}] (v2) at (0, {sqrt(3)}) {};

\begin{scope}[t0]
\draw (v1) edge[bend left] (v0);
\end{scope}

\begin{scope}[t1]
\draw (v0) edge[bend left] (v2);
\end{scope}

\begin{scope}[t2]
\draw (v1) edge[bend right] (v2);
\end{scope}


\end{tikzpicture}\qquad\qquad
	\begin{tikzpicture}[scale=1.3]
\tikzset{type0/.style={shape=circle, draw=red, scale=0.4, fill=white}}
\tikzset{type1/.style={shape=circle, draw=black!30!green, scale=0.4, fill=white}}
\tikzset{type2/.style={shape=circle, draw=black, scale=0.4, fill=white}}
\tikzset{t0/.style={draw=red, densely dotted}}
\tikzset{t1/.style={draw=black!40!green, dashed}}
\tikzset{t2/.style={}}

\node[type2, label={[label distance=-0.0cm]180:$v_{0,L}$}] (v0L) at (-1, 0) {};
\node[type2, label={[label distance=-0.0cm]0:$v_{0,R}$}] (v0R) at (1,0) {};
\node[type0, label={[label distance=-0.0cm]90:$v_2$}] (v2) at (0, {sqrt(3)}) {};
\node[type1, label={[label distance=-0.0cm]-90:$v_1$}] (v1) at (0, 0) {};

\begin{scope}[t0]
\draw (v1) -- (v0L);
\draw (v0R) -- (v1);
\end{scope}

\begin{scope}[t1]
\draw (v0L) -- (v2);
\draw (v0R) -- (v2);
\end{scope}

\begin{scope}[t2]
\draw (v1) -- (v2);
\end{scope}

\end{tikzpicture}
	\caption{The left image shows the lopsp-operation Dual, the right one shows the unique double chamber patch for this operation.}\label{fig:dual}
\end{figure}

A 4-cycle in a barycentric subdivision, double chamber patch or multiple double chamber patches glued together is called {\em trivial} if it has a face with only one edge or only a single type-1 vertex and its four incident edges in its interior.

\begin{definition}\label{def:c3}
	An embedded graph is \emph{3-connected} if it has at least four vertices and it has no vertex-cut of fewer than three vertices. 
 
	An embedded graph is a \emph{polyhedral embedding} if the intersection of any two of its faces is either empty, one vertex or one edge. Every polyhedral embedding is 3-connected.
	
	A lopsp-operation is \emph{$c3$} if for each polyhedral embedding $G$, the embedded graph $O(G)$ is also a polyhedral embedding. 
\end{definition}

The definition of a $c3$-lopsp-operation is different from the initial one in \cite{brinkmann2021local}, but Theorem 19 in \cite{brinkmann2021local} states that it is an equivalent definition. Lemma~\ref{lem:c3char} will be useful to prove that a lopsp-operation is not $c3$.

\begin{lemma}\label{lem:c3char}
	If there exists a cut-path $P$ for a lopsp-operation $O$ such that there is a 2-cycle or a non-trivial 4-cycle in two copies of $O_P$ sharing exactly one side and not their other point, then $O$ is not $c3$.
\end{lemma}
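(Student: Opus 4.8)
The plan is to use the definition of a $c3$-operation directly. By Definition~\ref{def:c3}, $O$ fails to be $c3$ as soon as we can produce a single polyhedral embedding $G$ for which $O(G)$ is not polyhedral, so the whole argument reduces to exhibiting such a $G$, with the hypothesised short cycle serving as the witness of non-polyhedrality. The first ingredient I would record is a dictionary between short cycles in a barycentric subdivision and failures of polyhedrality: if $B_H$ contains a 2-cycle, then, reading off the types of its two endpoints, $H$ has a loop (types $0,1$), a vertex lying twice on the boundary of a face (types $0,2$), or an edge lying twice on the boundary of a face (types $1,2$), and each of these contradicts the polyhedral condition; a non-trivial 4-cycle likewise forces two faces, or a face and a vertex, or two edges, to meet in more than a single vertex or edge. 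The purpose of excluding the \emph{trivial} 4-cycles is exactly to discard the 4-cycles that every barycentric subdivision unavoidably carries, namely the one around each type-1 vertex and the one bounding each double chamber.

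Second, I would realise the hypothesised configuration inside an actual $B_{O(G)}$. Recall that $B_{O(G)}$ is assembled by gluing one copy of $O_P$ into each face of $D_G$, and that two copies glued into adjacent faces of $D_G$ share precisely the side along which those two double chambers are adjacent (see Definition~\ref{def:pi}). Hence \textit{two copies of $O_P$ sharing exactly one side} is literally the local picture of two adjacent double-chamber patches, and it only remains to choose a polyhedral $G$ in which two double chambers are adjacent along a side of the type occurring in the hypothesis, without sharing their other point. This is automatic. If the shared side is a 2-side, take any edge $e$ of any polyhedral $G$: the two double chambers meeting along $e$ share their 2-side, and since $G$ is polyhedral the two faces incident with $e$ are distinct, so their 2-points (the "other point") differ. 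If the shared side is a 1-side, take a vertex $v$ on a face $f$ together with the two edges of $f$ consecutive at $v$: simplicity of $G$ forces these edges to be distinct and to have distinct far endpoints, so the adjacency is again clean. In either case the two copies of $O_P$ glued into these two faces of $D_G$ reproduce exactly the configuration named in the hypothesis.

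Finally I would conclude. The 2-cycle or non-trivial 4-cycle guaranteed by the hypothesis now sits inside $B_{O(G)}$. A 2-cycle is immediately a witness. For a 4-cycle, note that the union of the two double chambers is a disk (two quadrilaterals glued along one side), so the 4-cycle bounds a disk contained in this union; therefore whether a side of the 4-cycle is trivial or not is decided entirely within the two patches, and a 4-cycle that is non-trivial in the two-patch configuration stays non-trivial in $B_{O(G)}$, since its complementary disk contains all the rest of the graph. By the dictionary of the first step, $O(G)$ is then not polyhedral, and so by Definition~\ref{def:c3} the operation $O$ is not $c3$.

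I expect the main obstacle to be making the first-step dictionary fully precise: matching each admissible type-pattern of a short cycle with the corresponding degeneracy of $O(G)$, and verifying that the trivial 4-cycles are exactly the harmless ones that occur in every polyhedral barycentric subdivision. A secondary point requiring care is the bookkeeping that the canonical gluing of two patches across a shared side in $B_{O(G)}$ is the very configuration (same side type, same orientation) referred to in the hypothesis. The realisation step itself is easy, since every polyhedral embedding automatically supplies clean 1-side and 2-side adjacencies.
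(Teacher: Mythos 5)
Your proposal is correct and follows essentially the same route as the paper: the paper's proof likewise realises the two glued copies of $O_P$ as a subgraph of $B_{O(G)}$ for an arbitrary polyhedral embedding $G$ (which is exactly your ``automatic'' realisation step), concludes that the 2-cycle or non-trivial 4-cycle therefore appears in $B_{O(G)}$, and obtains a contradiction with the definition of a $c3$-operation. The only difference is presentational: your first-step ``dictionary'' between 2-cycles/non-trivial 4-cycles in a barycentric subdivision and failures of polyhedrality is precisely Lemma~14 of \cite{brinkmann2021local}, which the paper cites rather than reproves.
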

\begin{proof}
	For any polyhedral embedding $G$ we can find a subgraph $H$ of $B_{O(G)}$ that is isomorphic to these two copies of $O_P$. It follows that there is also a 2-cycle or non-trivial 4-cycle in $B_{O(G)}$. Lemma 14 in \cite{brinkmann2021local} now implies that $O(G)$ is not polyhedral, a contradiction with the definition of $c3$-operations.	
\end{proof}

We use Lemma~\ref{lem:size_OG} to make sure that the result of applying a lopsp-operation is not too small. If a graph is 3-connected then the result of applying a lopsp-operation different from Dual to it also has at least four vertices.

\begin{lemma}\label{lem:size_OG}
	Let $G$ be an embedded graph that has at least four vertices and $|V_G|$ edges (it is not a tree), and let $O$ be a $c3$-lopsp-operation different from Dual. Then the number of vertices of $O(G)$ is at least the number of vertices of $G$.
\end{lemma}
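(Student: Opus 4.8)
The plan is to count $|V_{O(G)}|$ as the number of type-$0$ vertices of $B_{O(G)}$, grouped by their image under the map $\pi$ of Definition~\ref{def:pi}. Let $Z$ be the set of type-$0$ vertices of $O$; since $\pi$ preserves types, $|V_{O(G)}| = \sum_{w\in Z}|\pi^{-1}(w)|$, so I first evaluate each summand. By Definition~\ref{def:pi} the $0$-, $1$- and $2$-points of $B_{O(G)}$ are in bijection with the points of $D_G$, that is, with the vertices, edges and faces of $G$ (Definition~\ref{def:bary}); hence $|\pi^{-1}(v_0)|=|V_G|$, $|\pi^{-1}(v_1)|=|E_G|$ and $|\pi^{-1}(v_2)|=|F_G|$. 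For a type-$0$ vertex $w$ of $O$ with $w\notin\{v_0,v_1,v_2\}$ I would use that the result is independent of the cut-path and compute $|\pi^{-1}(w)|$ with a cut-path whose patch $O_P$ has $w$ in its interior; then $w$ contributes one vertex to each of the $2|E_G|$ double chambers, so $|\pi^{-1}(w)|=2|E_G|$.

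With these counts the bound is immediate unless $v_2$ is the only type-$0$ vertex of $O$. Indeed, if $t(v_0)=0$ then $|V_{O(G)}|\ge|V_G|$; if $t(v_1)=0$ then $|V_{O(G)}|\ge|E_G|\ge|V_G|$ because $G$ is not a tree; and if $O$ has a type-$0$ vertex outside $\{v_0,v_1,v_2\}$ then $|V_{O(G)}|\ge 2|E_G|\ge|V_G|$. As every chamber of $O$ contains exactly one type-$0$ vertex (Definition~\ref{def:lopsp}), the set $Z$ is non-empty, so the only remaining possibility is $Z=\{v_2\}$, i.e.\ $t(v_0),t(v_1)\ne 0$ and $v_2$ is the unique type-$0$ vertex. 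Here $|V_{O(G)}|=|F_G|$, which can be smaller than $|V_G|$, and the crux of the proof is to show that this case forces $O$ to be Dual, contradicting the hypothesis.

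So assume $Z=\{v_2\}$. Then $v_2$ is the type-$0$ vertex of every chamber. Since $O$ is a plane triangulation on $N$ vertices, it has $2N-4$ chambers, every edge lies in exactly two of them, and every vertex $w$ lies in $\deg(w)$ of them. Any type-$1$ vertex $w$ therefore shares all of its $\deg(w)$ chambers with $v_2$, and as each such chamber contains an edge $wv_2$, we get $\deg(w)=2m$ with $m$ the number of edges between $w$ and $v_2$. By the degree conditions of Definition~\ref{def:lopsp}, every type-$1$ vertex has degree $4$ except $v_1$ when $t(v_1)=1$; a degree-$4$ type-$1$ vertex forces $m=2$, i.e.\ two parallel edges between $w$ and $v_2$. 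This $2$-cycle appears in two copies of $O_P$ sharing a side, so Lemma~\ref{lem:c3char} shows that $O$ is not $c3$ --- a contradiction. Hence no type-$1$ vertex can have degree $4$, which is only possible if $v_1$ (with $t(v_1)=1$ and $\deg(v_1)=2$) is the unique type-$1$ vertex. Summing degrees over the type-$1$ vertices yields the number of chambers, so $2=2N-4$ and $N=3$; the three vertices are then exactly $v_2,v_1,v_0$ of types $0,1,2$, which is precisely Dual. The main obstacle is this last step: turning ``$v_2$ lies in every chamber'' into the rigid conclusion $O=\mathrm{Dual}$, where the degree count alone still permits multigraph configurations and one genuinely needs Lemma~\ref{lem:c3char} to discard them.
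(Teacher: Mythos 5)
Your counting step and the reduction to the case where $v_2$ is the only type-$0$ vertex of $O$ follow the paper's proof exactly, and your closing count (once no type-$1$ vertex has degree $4$, summing degrees of type-$1$ vertices gives $2=2N-4$, so $N=3$ and $O$ is Dual) is a correct way to finish. The genuine gap is the unsupported sentence ``This $2$-cycle appears in two copies of $O_P$ sharing a side.'' In the situation you are in, $v_2$ lies in every chamber, so $O\setminus v_2$ is connected with a single face, i.e.\ a tree; since the segment $P_{v_0,v_1}$ of any cut-path avoids $v_2$, it is forced to be the \emph{unique} tree-path from $v_0$ to $v_1$, and it may pass through your vertex $w$ using its two type-$0$ edges. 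In that case cutting splits $w$ into $w_L$ and $w_R$, each incident with exactly one of the two parallel edges. Gluing two copies of $O_P$ along a $1$-side keeps all four copies of $w$ distinct, and gluing along the $2$-side (the $P$-diamond) identifies $w_L,w_R$ but keeps the two $2$-points distinct; in neither configuration covered by Lemma~\ref{lem:c3char} do the parallel edges close into a $2$-cycle. This is not a hypothetical worry: the graph with vertices $v_2$ (type $0$), $x$ (type $1$), $v_1,v_0$ (type $2$), single edges $\{x,v_1\},\{x,v_0\},\{v_1,v_2\},\{v_0,v_2\}$ and a double edge $\{x,v_2\}$ satisfies every condition of Definition~\ref{def:lopsp}, its only cut-path is $v_1,x,v_0,v_2$, and no two-patch configuration contains a $2$-cycle, so your argument cannot reach a contradiction for it.

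The step can be repaired inside your framework, but with a $4$-cycle rather than the $2$-cycle: when $P$ splits $w$ as above, the $P$-diamond contains the cycle $w_L,f,w_R,f'$, where $f,f'$ are its two $2$-points, because each copy of $w$ keeps one parallel edge from each of the two patches. This $4$-cycle is non-trivial: the side of it containing $v_1$ has a type-$2$ vertex in its interior (either $v_1$ itself, which cannot be of type $1$ here since a type-$1$ $v_1$ cannot be adjacent to the type-$1$ vertex $w$, or a tree-neighbour of $w$), and a type-$2$ interior vertex is incompatible with both triviality conditions; Lemma~\ref{lem:c3char} then gives the contradiction. Without this case your proof does not go through. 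A second, much smaller, issue: your evaluation $|\pi^{-1}(w)|=2|E_G|$ presupposes a cut-path whose patch has $w$ in its interior, and such a cut-path need not exist (in the example above every cut-path passes through $x$). This one is harmless: for any cut-path the two boundary copies of $w$ in each patch are each shared by exactly two double chambers, so the equality still holds, and even the crude bound $|\pi^{-1}(w)|\ge|E_G|\ge|V_G|$ would suffice for your purposes.
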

\begin{proof}
	Assume that $|V_{O(G)}|<|V_G|$. For every edge of $G$ there are exactly two double chambers in $D_G$, so there are $2|E_G|$ double chambers in $D_G$. For any vertex $x$ in $O$ that is not $v_0$, $v_1$ or $v_2$, the number of vertices in $\pi^{-1}(x)$ is exactly the number of double chambers. This implies that no vertex of $O$ except $v_0$, $v_1$, and $v_2$ is of type 0, otherwise there would be at least $2|E_G|\geq 2|V_G|> |V_G|$ vertices in $O(G)$. If $v_0$ would be of type 0 then $O(G)$ would have at least $|V_G|$ vertices. If $v_1$ would be of type 0 then $O(G)$ would have at least $|E_G|\geq|V_G|$ vertices. This implies that only $v_2$ is of type 0. With Lemma~\ref{lem:c3char} it follows that $O$ is Dual.
\end{proof}

\section{Preparatory results}\label{sec:prep}
	Since c3-lopsp-operations are associated with 3-connected tilings, one might expect that $c3$-operations preserve 3-connectivity. However, it is known that Dual does not always preserve 3-connectivity \cite{bokal2022connectivity}, so it is not true that all lopsp- or even lsp-operations do. In Section~\ref{sec:main} it is proved for a large subclass of lopsp-operations including most well-known operations such as Truncation and Ambo that they always preserve 3-connectivity in simple embedded graphs. We also prove that all operations not belonging to this subclass, for example Join and Dual, destroy 3-connectivity in certain embedded graphs. In this section we will prove a number of technical results which will allow us to prove the results in Section~\ref{sec:main}.
	 
	An informal explanation of our approach is the following:
	Every vertex of $G$ is either also a vertex of $O(G)$, or it becomes a face in $O(G)$. We will call this vertex or set of vertices in the face the vertex-shadow (Definition~\ref{def:0neighbourhood}). For every edge of $G$ at least one path between its vertex-shadows can be found that is completely contained in the two double chambers with that edge as their 1-vertex (Lemma~\ref{lem:edge-path}). The vertex-shadows together with these paths represent the structure of $G$ in $O(G)$. There are two ways how $G$ can be 3-connected while $O(G)$ is not. One way is that there are `local cuts', which cut off vertices of $O(G)$ from the underlying structure of $G$ in $O(G)$, while this structure stays intact. It will follow from Lemma~\ref{lem:no_2_in_v0set} that these cuts cannot exist under our conditions. The other way that 3-connectivity may not be preserved by $O$ is if the structure of $G$ in $O(G)$ is `broken'. This can happen at the level of vertices (Definition~\ref{def:break_vertex}) or edges (Definition~\ref{def:edge_breaking}). 
	
	In the following we will often consider two double chambers sharing their 1-vertex. Also in figures we often draw two double chamber patches instead of just one.
	
\begin{definition}\label{def:diamond}
	For an embedded graph $G$, the union of the two double chambers of $D_G$ with 1-vertex $e$ is the \emph{diamond} around $e$. We also say that the \emph{$P$-diamond} of a lopsp-operation $O$ with cut-path $P$ is the embedded graph consisting of two copies of $O_P$ that share their copies of $P_{v_0,v_1}$. If the cut-path is not specified we refer to such a structure as a \emph{diamond} of $O$.
\end{definition}

	The following definition is used to describe what happens to vertices, edges, and faces when a lopsp-operation is applied. The definition will be used in barycentric subdivisions, lopsp-operations and double chamber patches.

	\begin{definition}\label{def:0neighbourhood}	
		Let $H$ be an embedded graph with labelling function $t:V_H\to \{0,1,2\}$. The \emph{0-neighbourhood} $N_0(v)$ of a vertex $v$ in $H$ is the set of all vertices in $H$ that are of type 0 and are adjacent or equal to $v$, i.e.
		\[N_0(v)=\left\{ x\in V_{H} \mid \text{$t(x)=0$, $x=v$ or $\{x,v\}\in E_H$} \right\}.\]
		
		Let $v$ be a vertex of an embedded graph $G$, and let $v'$ be the corresponding vertex of $B_{O(G)}$. The 0-neighbourhood $N_0(v')$ consists of vertices of type 0 in $B_{O(G)}$. The set of vertices of $O(G)$ corresponding to those in $N_0(v')$ is called the \emph{vertex-shadow} $S_O(v)$ of $v$. Similarly, for a face $f$ of $G$, we say the set of vertices of $O(G)$ corresponding to the vertices in the 0-neighbourhood of $f$ in $B_{O(G)}$ is the \emph{face-shadow} $S_O(f)$ of $f$. 
	\end{definition}

\begin{lemma}\label{lem:image_singleton_or_face}
	Let $G$ be an embedded graph and let $O$ be a lopsp-operation. A vertex-shadow or face-shadow of a vertex or face of $G$ is one of the following:
	\begin{itemize}
		\item A singleton, containing one vertex of $O(G)$.
		\item A set consisting of all the vertices of a face of $O(G)$.
	\end{itemize}
\end{lemma}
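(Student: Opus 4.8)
The plan is to unwind the definitions of vertex-shadow and face-shadow and translate everything into statements about the barycentric subdivision $B_{O(G)}$ via the map $\pi$. The key preliminary observation I would make is that for every vertex $x$ of $B_{O(G)}$ the type of $x$, viewed as a vertex of the barycentric subdivision of $O(G)$, coincides with $t(\pi(x))$: since $B_{O(G)}$ is assembled by gluing copies of the chambers of $O$ and $\pi$ identifies each vertex of $B_{O(G)}$ with the vertex of $O$ it is a copy of, the labelling is preserved under the gluing. In particular a type-0 vertex of $B_{O(G)}$ is exactly a vertex whose $\pi$-image has type 0, and by the identification used in Definition~\ref{def:bary} these are precisely the vertices that correspond to vertices of $O(G)$, which is what the shadows are built from.

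Next I would locate the relevant vertex of $B_{O(G)}$. By the application procedure a vertex $v$ of $G$ is a 0-point of $D_G$, so the corresponding vertex $v'$ of $B_{O(G)}$ satisfies $\pi(v')=v_0$; likewise a face $f$ of $G$ gives a vertex $f'$ with $\pi(f')=v_2$. By property (3) of Definition~\ref{def:lopsp} we have $t(v_0)\neq 1$ and $t(v_2)\neq 1$, so in both cases the type of the corresponding vertex in $B_{O(G)}$ lies in $\{0,2\}$. This yields exactly two cases, and the argument is identical for vertices and for faces, so I will run it for $v'$.

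If $v'$ has type 0, then $v'$ is itself one of the type-0 vertices of $B_{O(G)}$. In any barycentric subdivision no two vertices of the same type are adjacent, so the only type-0 vertex adjacent or equal to $v'$ is $v'$ itself; hence $N_0(v')=\{v'\}$ and $S_O(v)$ is a singleton, the first alternative. If instead $v'$ has type 2, then $v'$ corresponds to a face $F$ of $O(G)$, and in a barycentric subdivision the type-0 neighbours of the centre of a face are exactly the vertices lying on the boundary of that face. Thus $N_0(v')$ corresponds to the set of all vertices of $F$, and $S_O(v)$ is the set of all vertices of the face $F$ of $O(G)$, the second alternative.

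I expect the only genuine subtlety to be the bookkeeping in the type-2 case: confirming that the type-0 neighbours of a face-centre are precisely the boundary vertices of $F$, and that a vertex occurring more than once on the facial walk of $F$ still contributes a single vertex to $N_0(v')$, so that $S_O(v)$ is literally the set of vertices incident with $F$, which is the intended meaning of "all the vertices of a face". Everything else is a direct application of the definition of $N_0$ together with the fact that $t(v_0),t(v_2)\neq 1$; this last fact is exactly what forbids $v'$ from having type 1 and so excludes any third case.
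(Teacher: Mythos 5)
Your proof is correct and follows exactly the same route as the paper, which simply observes that $t(v_0),t(v_2)\neq 1$ forces the relevant vertex of $B_{O(G)}$ to be of type 0 (giving a singleton) or type 2 (giving the vertex set of a face); you have merely unpacked the details that the paper leaves as "immediate".
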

\begin{proof}
	This follows immediately from the fact that in a lopsp-operation $v_0$ and $v_2$ are always of type 0 or type 2. 
\end{proof}

With this definition we can easily characterise Dual and the identity operation.

\begin{lemma}\label{lem:dual_char}
	A $c3$ lopsp-operation is Dual if and only if $v_2\in N_0(v_0)$ and it is the identity operation if and only if $v_0\in N_0(v_2)$. An operation is Dual or the identity operation if and only if there is an edge between $v_0$ and $v_2$.
\end{lemma}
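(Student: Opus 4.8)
The three equivalences all turn on whether there is an edge between $v_0$ and $v_2$, so the plan is to first reduce everything to that single question. Since $v_0\neq v_2$, since no edge joins two vertices of the same type (condition (2)), and since $t(v_0),t(v_2)\in\{0,2\}$, one has $v_2\in N_0(v_0)$ exactly when $\{v_0,v_2\}\in E$ and $t(v_0)=2$ (which forces $t(v_2)=0$), and dually $v_0\in N_0(v_2)$ exactly when $\{v_0,v_2\}\in E$ and $t(v_0)=0$ (forcing $t(v_2)=2$). These two cases are mutually exclusive and their disjunction is precisely ``$\{v_0,v_2\}\in E$''. Hence it suffices to prove the single statement: a $c3$-operation $O$ contains the edge $\{v_0,v_2\}$ if and only if $O$ is the triangle on $v_0,v_1,v_2$, which is Dual when $t(v_0)=2$ and the identity when $t(v_0)=0$.

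The forward direction is immediate: both Dual and the identity are the triangle $v_0v_1v_2$ (with $(t(v_0),t(v_1),t(v_2))$ equal to $(2,1,0)$ and $(0,1,2)$ respectively), and this triangle contains the edge $\{v_0,v_2\}$.

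For the converse, suppose $e^*=\{v_0,v_2\}$ is an edge; then it is of type $1$ and lies on two triangular faces whose third vertices are of type $1$. I would choose a cut-path $P$ with $P_{v_0,v_2}=e^*$: a simple path from $v_1$ to $v_0$ avoiding $v_2$ exists by $2$-connectivity, and appending $e^*$ yields a valid cut-path. Then the two $1$-sides of $O_P$ are the single-edge copies $v_{0,L}v_2$ and $v_{0,R}v_2$ of $e^*$. Now form the diamond by gluing two copies $A,B$ of $O_P$ along their $2$-side; this identifies the two $0$-points and the $1$-point but leaves the two $2$-points $v_2^A\neq v_2^B$ distinct. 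The four $1$-sides of $A$ and $B$ are all copies of $e^*$ and close up into a $4$-cycle $v_{0,L},\,v_2^A,\,v_{0,R},\,v_2^B$ whose interior is exactly the interior of the whole diamond. Since $A$ and $B$ share exactly the $2$-side and not their $2$-point, Lemma~\ref{lem:c3char} shows that if this $4$-cycle were non-trivial then $O$ would not be $c3$; so under our hypothesis it must be trivial. Its interior always contains the shared $1$-point, so it cannot be trivial via a single interior edge, and can only be trivial as a single type-$1$ vertex together with its four incident edges. This forces $v_1$ (type $1$, degree $2$) to be the \emph{only} interior vertex, i.e.\ $O$ has no vertex other than $v_0,v_1,v_2$. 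Thus $O$ is the triangle $v_0v_1v_2$, and reading off $t(v_0)$ identifies it as Dual or the identity, completing all three equivalences.

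The delicate step is producing a genuinely non-trivial cycle. The obvious attempts fail: gluing two copies along a $1$-side, or using the $4$-cycle through the two common type-$1$ neighbours of $v_0$ and $v_2$, always leaves the edge $e^*$ sitting inside as a diagonal, which makes the cycle trivial. The essential idea is therefore to glue along the \emph{$2$-side} and to use the four copies of $e^*$ themselves as the bounding $4$-cycle, so that every non-special vertex of $O$ is trapped in the interior and non-triviality is equivalent to $O$ being larger than the bare triangle. Finally, the whole construction is symmetric under interchanging the types $0$ and $2$, which is exactly the symmetry exchanging the roles of Dual and the identity, so no separate argument for the identity case is needed.
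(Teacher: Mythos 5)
Your proof is correct and follows essentially the same route as the paper: choose a cut-path whose $v_0$--$v_2$ segment is the edge itself, observe that the four copies of this edge bound the $P$-diamond as a $4$-cycle with $v_1$ in its interior, and invoke Lemma~\ref{lem:c3char} to force that cycle to be trivial, leaving only the triangle on $v_0,v_1,v_2$. The preliminary reduction of all three equivalences to the single question of whether $\{v_0,v_2\}$ is an edge, and the closing remarks on why other gluings fail, are accurate elaborations of steps the paper leaves implicit.
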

\begin{proof}
	It is evident that Dual and the identity operation satisfy the conditions.
	Assume that there is an edge between $v_0$ and $v_2$. As neither $v_0$ nor $v_2$ can be of type 1, the edge $\{v_0,v_2\}$ is of type 1. This implies that $v_2\in N_0(v_0)$ or $v_0\in N_0(v_2)$. Adding this edge to any path from $v_0$ to $v_1$ without $v_2$ gives a cut-path $P$.
	
	The 1-edge between $v_0$ and $v_2$ induces a cycle of length four in the $P$-diamond of $O$, which is trivial by Lemma~\ref{lem:c3char}. As there is at least one vertex --namely $v_1$-- on the inside of this cycle, it is the only one and it is of type 1. This implies that $O$ is Dual or the identity operation, depending on the types of $v_0$ and $v_2$.
\end{proof}

Lemma~\ref{lem:intersecting_edges} and Lemma~\ref{lem:minimal_path} are useful for many proofs in this section.

\begin{lemma}\label{lem:intersecting_edges}
	Let $O$ be a lopsp-operation with a cut-path $P$. Let $x$ and $y$ be two vertices of $P$ different from $v_1$ and $v_2$. Let $P_L$ be one copy of $P$ in $O_P$ and let $P_R$ be the other. Let $x_L$ and $y_L$ be the copies of $x$ and $y$ in $P_L$ and let $x_{R}$ and $y_{R}$ be the copies of $x$ and $y$ in $P_R$. If there is an edge in $O_P$ between $x_L$ and $y_R$, then any path in $O_P$ between $y_L$ and $x_R$ also contains $x_L$ or $y_R$. In particular, there is no edge between $x_R$ and $y_L$.
\end{lemma}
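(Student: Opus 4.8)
The plan is to argue purely from the planarity of the double chamber patch $O_P$ and the structure of its outer face. First I would recall that the outer face of $O_P$ is bounded by the cycle obtained by traversing $P_L$ from $v_1$ to $v_2$ and then $P_R$ back from $v_2$ to $v_1$, where only $v_1$ and $v_2$ are shared by the two copies. Since $x$ and $y$ are vertices of $P$ different from $v_1$ and $v_2$, the copies $x_L,y_L,x_R,y_R$ are four distinct vertices on this outer cycle. Their cyclic order is governed by the order of $x$ and $y$ along $P$: the copies on $P_L$ occur in the order of $P$ and those on $P_R$ in the reverse order. Checking the two cases (whether $x$ precedes $y$ on $P$ or not), in both one finds that $x_L$ and $y_R$ separate $y_L$ from $x_R$ along the outer cycle; equivalently, the pairs $\{x_L,y_R\}$ and $\{y_L,x_R\}$ interleave cyclically.

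Next I would note that the edge $\{x_L,y_R\}$ cannot be a boundary edge, since every edge of the outer cycle joins two consecutive vertices of $P_L$ or of $P_R$, whereas $x_L\in P_L$ and $y_R\in P_R$ are distinct copies. Hence $\{x_L,y_R\}$ is a chord drawn in the interior of the disk bounded by the outer cycle. By the Jordan curve theorem this chord splits the disk into two regions separated by $\{x_L,y_R\}$, and by the interleaving just established $y_L$ lies in one region and $x_R$ in the other.

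I would then conclude with a standard planarity argument. Any path in $O_P$ from $y_L$ to $x_R$ is a curve inside the closed disk that must pass from one region to the other, so it meets the chord $\{x_L,y_R\}$; as the path follows edges of the plane graph $O_P$, it can only meet this edge at one of its endpoints, and hence contains $x_L$ or $y_R$. In particular an edge between $x_R$ and $y_L$ would be such a path having no internal vertices and containing neither $x_L$ nor $y_R$ (the four copies are distinct because $x,y\neq v_1,v_2$ and $x\neq y$), which is impossible; so no such edge exists.

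The step I expect to be the main obstacle is making the separation precise: pinning down the cyclic order of $x_L,y_L,x_R,y_R$ on the outer cycle and justifying that a graph path cannot cross the interior chord except at its endpoints. The rest is bookkeeping about the two copies of $P$; the only genuinely topological input is this one use of planarity, essentially the non-crossing property of chords across a face. I would also record the harmless subcase where $x$ or $y$ equals $v_0$, which still has two distinct copies $v_{0,L},v_{0,R}$, so the argument goes through unchanged.
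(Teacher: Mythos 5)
Your proof is correct and follows essentially the same route as the paper's: both reduce the claim to the Jordan curve theorem applied to a closed curve through $x_L$ and $y_R$ that separates $y_L$ from $x_R$, using the cyclic position of the four copies on the outer cycle of $O_P$. The only cosmetic difference is that you treat the edge $\{x_L,y_R\}$ as a chord of the outer disk, whereas the paper adds a parallel edge in the outer face so that the two edges form a separating 2-cycle; you in fact spell out the interleaving argument that the paper leaves implicit.
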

\begin{proof}
	As $x$ and $y$ are not $v_1$ or $v_2$, $x_L\neq x_{R}$ and $y_L\neq y_{R}$. Assume that there is an edge between $x_L$ and $y_R$ in $O_P$. Let $O_P^+$ be the plane graph which is the result of adding one extra edge between $x_L$ and $y_R$ to $O_P$ in the outer face. Now the two edges between $x_L$ and $y_R$ form a cycle in a plane graph, and $y_L$ and $x_R$ are on different sides of that cycle. The Jordan curve theorem implies that any path between $y_L$ and $x_R$ in $O_P^+$ must also contain $x_L$ or $y_R$, which proves the lemma.
\end{proof}

\begin{lemma}\label{lem:minimal_path}
	Let $O$ be a $c3$-lopsp-operation and let $G$ be an embedded graph. Let $O$ be applied to $G$ using a cut-path of minimal length. If the vertices of an edge of $B_{O(G)}$ are on the same 1- or 2-side of a double chamber, then the edge itself is on that 1- or 2-side.
\end{lemma}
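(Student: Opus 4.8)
The plan is to argue by contradiction. Suppose $e$ is an edge of $B_{O(G)}$ whose two endpoints $a,b$ lie on a single side $S$ of a double chamber, but $e$ is not itself on $S$, and then push the whole configuration down to $O$ via the map $\pi$. Recall that a 1-side is a copy of $P_{v_0,v_2}$ and a 2-side is two copies of $P_{v_0,v_1}$ meeting at a common 1-point, so in either case the vertices of $S$ are copies of the vertices of a subpath $Q$ of $P$, with $Q=P_{v_0,v_2}$ or $Q=P_{v_0,v_1}$. Thus $\pi(a)$ and $\pi(b)$ are vertices of $Q$ and $\pi(e)$ is an edge of $O$ joining them. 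The first thing I would check is that $\pi(a)\neq\pi(b)$: if they coincided then $\pi(e)$ would be a loop, which condition~(2) forbids. This is exactly the point that needs attention for a 2-side, where $\pi$ is not injective on $S$ and $a,b$ could a priori be the two copies of one vertex of $P_{v_0,v_1}$ lying on the two different halves.

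Since $\pi$ sends edges lying on a side to edges of $P$ and interior edges to edges of $O$ that are not on $P$, I would first dispose of the case $\pi(e)\in P$: then $\pi(a)$ and $\pi(b)$ are consecutive on $Q$ and $\pi(e)$ is the unique edge of $Q$ between them, so $e$ is a copy of that edge lying on a side joining $a$ and $b$, and the only such copy is the edge of $S$ between $a$ and $b$; hence $e\subseteq S$, contradicting the assumption. So I may assume $\pi(e)\notin P$, i.e. $e$ lies in the interior of one of the two double chambers meeting along $S$.

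If $\pi(a)$ and $\pi(b)$ are not adjacent on $Q$, then $\pi(e)$ is a chord of $Q$, and since $Q$ is a subpath of $P$ I can replace the segment of $P$ between $\pi(a)$ and $\pi(b)$ by the single edge $\pi(e)$. This deletes only interior vertices of $Q$ and retains $v_0$, $v_1$, $v_2$, so the result is again a simple path from $v_1$ to $v_2$ through $v_0$ that is strictly shorter, contradicting the minimality of $P$. If instead $\pi(a)$ and $\pi(b)$ are adjacent on $Q$ but $\pi(e)$ is not the edge of $Q$ between them, then $\pi(e)$ together with that edge is a 2-cycle in $O$. I would realise this 2-cycle inside two copies of $O_P$ glued along $S$: since adjacent double chambers are mirror images across their common side, the edge $e$ in the double chamber on one side of $S$ and its mirror image in the double chamber on the other side both join the endpoints $a,b\in S$, forming a bigon. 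By Lemma~\ref{lem:c3char} this forces $O$ not to be $c3$, contradicting the hypothesis.

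The main obstacle is the 2-side case. There the side is not a single copy of a subpath but two copies joined at the 1-point, so $\pi$ fails to be injective along $S$ and one must track whether $a$ and $b$ lie on the same half or on different halves; in the latter situation no copy of an edge of $Q$ can join them, so only the chord (shorten) and parallel-edge (Lemma~\ref{lem:c3char}) alternatives survive, and the verification that the shortened walk is still a simple cut-path through $v_0$ must be carried out separately in that case.
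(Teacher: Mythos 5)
Your overall strategy matches the paper's, which proves this lemma by citing an external minimality result together with Lemma~\ref{lem:c3char}: you let minimality of the cut-path kill chords and the $c3$-property kill parallel edges, and your shortening argument, as well as the observation that $\pi(a)\neq\pi(b)$ because $O$ has no loops, are correct. However, there is a genuine error at the heart of your parallel-edge case: adjacent double chambers are \emph{not} mirror images of each other. This is precisely the difference between lsp- and lopsp-operations: the copies of $O_P$ are glued into the double chambers orientation-consistently (which is why chiral operations such as Gyro exist at all), so a shared side is the ``left'' boundary copy of the cut-path in one double chamber and the ``right'' one in the other. Consequently, if $e$ is an interior edge of $D$ whose ends are the copies $x_L,y_L$ on the shared side $S$, then the edge of the adjacent double chamber $D'$ corresponding to $\pi(e)$ has its ends on the \emph{other} side of $D'$, not on $S$; the ``mirror image of $e$'' you invoke does not exist, and neither does the bigon built from it. The case can still be closed, but differently. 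If $a,b$ lie on the same copy of the subpath (automatic for a 1-side), the 2-cycle is formed by $e$ together with the edge of $S$ itself between $a$ and $b$ --- no second double chamber is needed. If $a,b$ lie on different halves of a 2-side, the edge of $D'$ corresponding to $\pi(e)$ joins the \emph{opposite} pair of copies, and together with the two side edges one obtains a 4-cycle in the diamond enclosing the 1-point; that 4-cycle is non-trivial (a trivial one would force the 1-point to be a type-1 vertex of degree four, impossible since $t(v_1)=1$ implies $\deg(v_1)=2$), and then Lemma~\ref{lem:c3char} applies. This cross-half 4-cycle phenomenon is exactly what the paper's Lemma~\ref{lem:intersecting_edges} is built around, and it is invisible in your write-up because the mirror-image picture hides it.

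A second gap: both of your localization claims --- ``the only such copy is the edge of $S$ between $a$ and $b$'' in the case $\pi(e)\in P$, and ``$\pi(e)\notin P$, i.e.\ $e$ lies in the interior of one of the two double chambers meeting along $S$'' --- are asserted rather than proved. They reduce to the fact that two distinct sides of $B_{O(G)}$ intersect only in 0-, 1- or 2-points, but following that reduction one lands in the case $\{\pi(a),\pi(b)\}=\{v_0,v_2\}$, i.e.\ an edge of $O$ between $v_0$ and $v_2$, which by Lemma~\ref{lem:dual_char} means $O$ is Dual or the identity. Your proof never invokes Lemma~\ref{lem:dual_char}, and it cannot avoid doing so: for those two (perfectly $c3$) operations a 1-side is a single edge, and whenever a vertex of $G$ occurs twice on the boundary walk of a face, $B_{O(G)}$ contains two parallel 1-sides between the same 0-point and 2-point, each of which is an edge whose endpoints lie on the other. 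So the uniqueness step is genuinely false unless Dual and the identity are treated (or excluded) at this point --- which is also the only regime in which the paper ever applies the lemma.
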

\begin{proof}
	This follows immediately from Lemma~15 in \cite{brinkmann2021local} and Lemma~\ref{lem:c3char} in this article.
\end{proof}

Lemma~\ref{lem:4cycle_positions} will only be used in Section~\ref{sec:main}. It gives insight in where 4-cycles in $B_{O(G)}$ can be located with respect to the double chambers of $G$. 

\begin{lemma}\label{lem:4cycle_positions}
	Let $O$ be a $c3$-lopsp-operation that is not Dual or the identity operation and let $G$ be a simple 3-connected embedded graph such that there are no multiple edges in $B_G$. Then there are no cycles of length two in $B_{O(G)}$. If $c$ is a non-trivial cycle in $B_{O(G)}$ of length 4, then two non-adjacent vertices of $c$ are 2-points. The other 2 vertices are on different 2-sides. 
\end{lemma}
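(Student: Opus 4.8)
The plan is to apply $O$ with a cut-path $P$ of minimal length, so that Lemma~\ref{lem:minimal_path} is available, and to study a short cycle $c$ through the projection $\pi\colon B_{O(G)}\to O$. The guiding principle is that a cycle of length $2$ or $4$ can only live in very few copies of $O_P$: every edge of $B_{O(G)}$ lies either in the interior of a single double chamber or on a side shared by two adjacent double chambers, and Lemma~\ref{lem:c3char} already forbids a $2$-cycle or a non-trivial $4$-cycle inside two copies of $O_P$ that share exactly one side and not their other point. So the whole problem is to rule out every way a short cycle could avoid this forbidden local pattern, except for the one configuration in the statement. I will use throughout that under the gluing the $0$-, $1$- and $2$-points of $B_{O(G)}$ correspond exactly to the vertices, edges and faces of $G$; in particular a $2$-point is a copy of $v_2$, and since every edge of $G$ gives rise to a $1$-point incident to exactly two double chambers (its diamond), the neighbourhood of a $1$-point is covered by two copies of $O_P$ sharing a side.

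First I would dispose of $2$-cycles. Such a cycle that stays within one copy of $O_P$, within one diamond, or that only ever changes copies at a $1$-point is confined to two copies of $O_P$ sharing a side and is excluded by Lemma~\ref{lem:c3char}. In the remaining cases the image $\pi(c)$ is a closed walk of length $2$ in the plane graph $O$, so the two edges of $c$ are copies of a single edge of $O$ or of two parallel edges; tracing these back, each possibility either confines $c$ to two copies sharing a side (contradicting Lemma~\ref{lem:c3char}), or forces the two edges to be the two copies of one cut-path edge on a shared side and hence produces a multiple edge in $B_G$ (contradicting the hypothesis), or yields an edge $\{v_0,v_2\}$ in $O$, which by Lemma~\ref{lem:dual_char} makes $O$ equal to Dual or the identity. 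Hence $B_{O(G)}$ has no $2$-cycle.

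For a non-trivial $4$-cycle $c=a_1a_2a_3a_4$ the same trapping argument forces $c$ to spread beyond two copies sharing a single side, which can only happen if $c$ reaches a point where a whole fan of copies meets, i.e. a copy of $v_0$ or $v_2$. Since $O$ has no loop, no edge of $c$ can join two copies of $v_2$, so if two vertices of $c$ are $2$-points they are automatically the non-adjacent pair $a_1,a_3$. I would then show these pivot vertices must be $2$-points rather than $0$-points: a pivot at a copy of $v_0$ can, using Lemma~\ref{lem:intersecting_edges} and the planarity of $O_P$, be forced back into the two-copies-sharing-a-side pattern, or made to produce an edge $\{v_0,v_2\}$ (again Dual or the identity), or a multiple edge in $B_G$. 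Once $a_1,a_3$ are identified as $2$-points, each copy of $O_P$ contains a unique copy of $v_2$, so every $1$-side on the boundary of the copy carrying $a_1a_2$ has $a_1$ as its endpoint; were the connecting vertex $a_2$ on such a $1$-side, then $a_1$ and $a_2$ would lie on the same side and Lemma~\ref{lem:minimal_path} would put the edge $a_1a_2$ onto it, confining $c$ and contradicting Lemma~\ref{lem:c3char}. Thus $a_2$, and symmetrically $a_4$, lies on a $2$-side, and these two $2$-sides must be distinct, since otherwise $c$ would again be trapped in a single diamond.

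The step I expect to be the main obstacle is exactly this classification for the $4$-cycle: proving that $c$ can spread only by pivoting at two $2$-points lying on two distinct $2$-sides, and in particular excluding both $0$-point pivots and the configurations in which $c$ merely encircles a fan point through chords that cut off a copy of $v_2$. Handling the latter seems to require repeatedly extracting a smaller $4$-cycle inside two copies sharing one side and appealing to Lemma~\ref{lem:c3char}. This is where all the hypotheses are used at once: that $G$ is simple and $3$-connected, that $B_G$ has no multiple edges, and that $O$ is neither Dual nor the identity (equivalently, by Lemma~\ref{lem:dual_char}, that $O$ has no edge between $v_0$ and $v_2$). A further technical point is that $c$ could a priori be non-contractible; I would deal with this by arguing entirely through the length-$4$ closed walk $\pi(c)$ in the plane graph $O$ together with the planar confinement of Lemma~\ref{lem:intersecting_edges}, which bounds how the two copies of a cut-path vertex can be joined inside $O_P$ and hence limits the ways the four vertices of $c$ can match up.
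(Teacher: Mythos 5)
Your overall framing is sound and shares the paper's starting moves: a minimal cut-path so that Lemma~\ref{lem:minimal_path} applies, the observation that Lemma~\ref{lem:c3char} kills any short cycle confined to two copies of $O_P$ sharing one side, and the use of Lemma~\ref{lem:intersecting_edges} to control how copies of cut-path vertices can be joined. But the step you yourself flag as ``the main obstacle'' is exactly where the proof lives, and your proposal does not contain the idea needed to get past it. The paper organises the argument around a minimal set $M$ of double chambers covering the edges of $c$ (showing $|M|\le 4$ and that the double chambers of $M$ involve at most two distinct 2-points), and then dispatches most configurations with a transfer argument that is entirely absent from your plan: whenever the configuration of double chambers in $M$ can be realised in a polyhedral embedding $G'$, the cycle $c$ would reappear as a non-trivial 2- or 4-cycle in $B_{O(G')}$, contradicting the fact that $O(G')$ is polyhedral and hence 3-connected (Lemma 14 and Theorem 19 of the cited reference). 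This is what eliminates, for instance, the case where all edges of $c$ lie in double chambers sharing a single 2-point (a ``fan'' of three or four double chambers around one face-vertex of $G$), and the cases where $M_f$ and $M_g$ are single double chambers or share 2-sides in a way reproducible in a plane graph. None of these configurations reduce to ``two copies of $O_P$ sharing exactly one side,'' so Lemma~\ref{lem:c3char} alone cannot exclude them, and your proposed fallbacks (producing an edge $\{v_0,v_2\}$ or a multiple edge in $B_G$) do not apply to them either.

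Concretely: your claim that a 4-cycle ``can only spread by pivoting at a copy of $v_0$ or $v_2$'' overlooks that consecutive edges of $c$ can also pass between double chambers through shared 1-sides or 2-sides without ever meeting a fan point, and your promise that a pivot at a copy of $v_0$ ``can be forced back into the two-copies-sharing-a-side pattern'' is asserted rather than argued; in the paper this is precisely where Lemma~\ref{lem:intersecting_edges} is combined with the polyhedral-transfer argument and with the hypothesis that $B_G$ has no multiple edges. Until you either adopt that transfer argument (or an equivalent global tool) and carry out the case analysis over the possible shapes of $M_f$ and $M_g$, the proof is incomplete: the cases it silently skips are not degenerate corner cases but the bulk of the work.
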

\begin{proof}
	It follows from Lemma~\ref{lem:dual_char} that there is no edge between $v_0$ and $v_2$.
	
	Let $c$ be a non-trivial cycle of length two or four in $B_{O(G)}$. We apply $O$ to $G$ using a cut-path of minimal length.
	Let $M$ be a set of double chambers in $D_G$ of minimal size such that every edge of $c$ is in at least one double chamber of $M$. If $M$ has more than one element, then as $c$ must enter and leave every double chamber in $M$, at least two vertices on the boundary of each double chamber in $M$ are in $c$. It follows that every double chamber of $M$ shares at least two points with other double chambers in $M$. Note that every double chamber in $M$ contains at least one edge of $c$ that is not in another double chamber of $M$, as otherwise $M$ would not be of minimal size. It follows that $M$ has at most four elements.

	Assume that there is a 2-point $f$ such that the union of all the double chambers of $M$ containing this 2-point contains only one edge of $c$. The vertices of this edge would be on the same 2-side of a double chamber $D$. At least one of the vertices of the edge is not a 0-point, so the double chamber $D'$ sharing the 2-side with $D$ also contains an edge of $c$ and is therefore in $M$. Lemma~\ref{lem:minimal_path} implies that the edge of $c$ in $D$ is on the shared 2-side, which implies that it is also in $D'$ and therefore $M$ is not minimal, a contradiction. It follows that there are at most two different 2-points in the double chambers of $M$.
	
	Note that if the configuration of the double chambers that appear in $M$ can also be found in a polyhedral embedding $H$, then $c$ induces a non-trivial cycle of length two or four in $B_{O(H)}$. That is not possible as $O(H)$ is 3-connected (Lemma 14 and Theorem 19 in \cite{brinkmann2021local}).
	There are two cases:
	
	\begin{itemize}
		\item \textbf{There is only one 2-point in the double chambers of $M$:} As there can be no multiple edges between 0- and 2-vertices, we can find a plane graph that has the same configuration of double chambers, a contradiction.
		
		\item \textbf{There are two different 2-points in the double chambers of $M$:} Let $M_f$ and $M_g$ be the subsets of $M$ containing the double chambers with 2-points $f$ and $g$ respectively. In the double chambers of each subset there are exactly two edges of $c$, as there cannot be fewer.
		
		Each of the two subsets is of one of the following three forms: 
		\begin{enumerate}[(a)]
			\item One double chamber that contains two edges of $c$
			\item Two double chambers sharing a 1-side
			\item Two double chambers not sharing a 1-side
		\end{enumerate}
		
		Assume first that one of the sets, say w.l.o.g. $M_f$ is of form $(c)$. As there can be no edge of $c$ between the two 0-points of a double chamber, the double chambers of $M_f$ must share their 2-sides with double chambers of $M_g$, so $M_g$ cannot be of form $(a)$. If $M_g$ would be of form $(b)$ then there would be a double edge between $f$ and a 0-vertex, a contradiction. It follows that if $M_f$ or $M_g$ is of form $(c)$, then the other set is also of form $(c)$. Two non-adjacent vertices of $c$ are 2-points and the other 2 vertices of $c$ are on different 2-sides. Assume from now on that neither $M_f$ nor $M_g$ are of form $(c)$. We will prove that this leads to a contradiction.
		
		Assume w.l.o.g. that $M_f$ is of form $(a)$. Then $M_g$ cannot be of form $(a)$ for then any plane graph would have the same configuration of double chambers or there would be a double edge in $G$. It follows that $M_g$ is of form $(b)$. If the double chamber in $M_f$ would share its 2-side with a double chamber of $M_g$ we could again find the same configuration in a plane graph, so the double chamber in $M_f$ shares only its 0-points with double chambers in $M_g$. As there are no multiple edges in $G$, these 0-points are the two 0-points that are not shared by the two double chambers of $M_g$. Now the edges of $c$ in $M_g$ are in contradiction with Lemma~\ref{lem:intersecting_edges} because two of the vertices of $c$ in $M_g$ are 0-points and the other is not a 2-point.
		
		We can now assume that both $M_f$ and $M_g$ are of form $(b)$. If the double chambers in $M_f$ and $M_g$ would not share any 2-sides, then exactly as in the previous case we get a contradiction with Lemma~\ref{lem:intersecting_edges}. If the double chambers in $M_f$ and $M_g$ would share both 2-sides then $G$ would have a vertex of degree 2 which is not the case. It follows that they share exactly one 2-side. The remaining 0-points are also shared as otherwise we could find a plane graph with this configuration of double chambers. This shared 0-point is in $c$. 
		Let $D_1$ be the double chamber in $M_f$ that shares its 2-side with a double chamber in $M_g$, and let $D_2$ be the other double chamber in $M_f$. There is an edge $e$ in $c$ between the 0-point of $D_2$ that is not in $D_1$ and a vertex $x$ on the 1-side shared by $D_1$ and $D_2$. The vertex $x$ is not a 2-point. There is also an edge of $c$ between $x$ and a vertex $y$ on the 2-side of $D_1$. As there is an edge $e'$ in $D_1$ with $\pi(e')=\pi(e)$, Lemma~\ref{lem:intersecting_edges} implies that $y$ is the 0-point that is shared by $D_1$ and $D_2$. But then $\{x,y\}$ is the only edge in $D_1$ and it has both its vertices on the same 1-side. Lemma~\ref{lem:minimal_path} implies that $\{x,y\}$ is also in $D_2$ which is in contradiction with the minimality of $M$.
	\end{itemize}
\end{proof}

\subsection*{Breaking vertices}
We will now formally define what it means for a set of vertices of $O(G)$ to `break' a vertex of $G$. One of the reasons that Dual does not preserve 3-connectivity is because two vertices of $G^*$ can break too many vertices of $G$. 

\begin{definition}\label{def:break_vertex}
	Let $G$ be an embedded graph and let $O$ be a lopsp-operation. A set $X$ of vertices of $O(G)$ \emph{breaks a vertex}\index{break!a vertex} $v$ of $G$ if all of the vertices of $S_O(v)$ are in $X$, or if not all vertices of $S_O(v)\setminus X$ are in the same component of $O(G)\setminus X$. 
\end{definition}

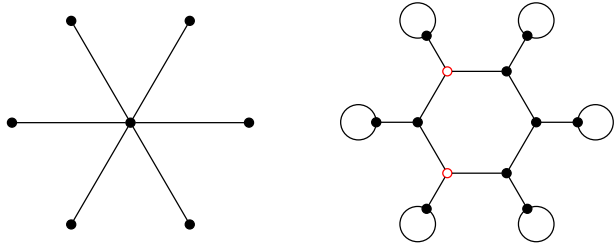
\begin{figure}
	\centering
	\scalebox{1.2}{\begin{tikzpicture}[scale=1.5, rotate = 90]
\tikzset{type0/.style={shape=circle, draw=black, scale=0.3, fill=black}}
\tikzset{type1/.style={shape=circle, draw=black!30!green, scale=0.3, fill=white}}
\tikzset{type2/.style={shape=circle, draw=black, scale=0.3, fill=white}}
\tikzset{t0/.style={draw=red, thin}}
\tikzset{t1/.style={draw=black!40!green, thin}}
\tikzset{t2/.style={}}



\node[type0] (v1) at ({(cos(120)-1)/2}, {sin(60)/2}) {};
\node[type0] (v2) at (0, {sin(60)}) {};
\node[type0] (v3) at({(1 - cos(120))/2}, {sin(60)/2}) {};
\node[type0] (v4) at({(1 - cos(120))/2}, {-sin(60)/2}) {};
\node[type0] (v5) at({(0}, {-sin(60)}) {};
\node[type0] (v6) at ({(cos(120)-1)/2}, {-sin(60)/2}) {};

\node[type0] (f) at (0, 0) {};

\begin{scope}[t2]
\draw (f) edge (v1);
\draw (f) edge (v2);
\draw (f) edge (v3);
\draw (f) edge (v4);
\draw (f) edge (v5);
\draw (f) edge (v6);
\end{scope}

\draw[draw=none]  ({(cos(120)-1)/2}, {sin(60)/2}) circle [radius = {0.15*sin(60)}];
\draw[draw=none]  (0, {sin(60)}) circle [radius = {0.15*sin(60)}];
\draw[draw=none] ({(1 - cos(120))/2}, {sin(60)/2}) circle [radius = {0.15*sin(60)}];
\draw[draw=none] ({(1 - cos(120))/2}, {-sin(60)/2}) circle [radius = {0.15*sin(60)}];
\draw[draw=none] ({(0}, {-sin(60)}) circle [radius = {0.15*sin(60)}];
\draw[draw=none]  ({(cos(120)-1)/2}, {-sin(60)/2}) circle [radius = {0.15*sin(60)}];

\end{tikzpicture}} \qquad
	\scalebox{1.2}{\begin{tikzpicture}[scale=1.5, rotate = 90]
\tikzset{type0/.style={shape=circle, draw=black, scale=0.3, fill=black}}
\tikzset{type1/.style={shape=circle, draw=black!30!green, scale=0.3, fill=white}}
\tikzset{red/.style={shape=circle, draw=red, scale=0.3, fill=white}}
\tikzset{t0/.style={draw=red, thin}}
\tikzset{t1/.style={draw=black!40!green, thin}}
\tikzset{t2/.style={}}

\node[red] (v1') at ({0.5*(cos(120)-1)/2}, {0.5*sin(60)/2}) {};
\node[type0] (v2') at (0, {0.5*sin(60)}) {};
\node[red] (v3') at({(1 - cos(120))/4}, {sin(60)/4}) {};
\node[type0] (v4') at ({(1 - cos(120))/4}, {-sin(60)/4}) {};
\node[type0] (v5') at (0, {-0.5*sin(60)}) {};
\node[type0] (v6') at ({(cos(120)-1)/4}, {-sin(60)/4}) {};

\node[type0] (v1) at ({0.85*(cos(120)-1)/2}, {0.85*sin(60)/2}) {};
\node[type0] (v2) at (0, {0.85*sin(60)}) {};
\node[type0] (v3) at({0.85*(1 - cos(120))/2}, {0.85*sin(60)/2}) {};
\node[type0] (v4) at({0.85*(1 - cos(120))/2}, {-0.85*sin(60)/2}) {};
\node[type0] (v5) at({(0}, {-0.85*sin(60)}) {};
\node[type0] (v6) at ({0.85*(cos(120)-1)/2}, {-0.85*sin(60)/2}) {};

\begin{scope}[t2]
	\draw (v1) edge (v1');
	\draw (v2) edge (v2');
	\draw (v3) edge (v3');
	\draw (v4) edge (v4');
	\draw (v5) edge (v5');
	\draw (v6) edge (v6');
	
	\draw  ({(cos(120)-1)/2}, {sin(60)/2}) circle [radius = {0.15*sin(60)}];
	\draw  (0, {sin(60)}) circle [radius = {0.15*sin(60)}];
	\draw ({(1 - cos(120))/2}, {sin(60)/2}) circle [radius = {0.15*sin(60)}];
	\draw ({(1 - cos(120))/2}, {-sin(60)/2}) circle [radius = {0.15*sin(60)}];
	\draw ({(0}, {-sin(60)}) circle [radius = {0.15*sin(60)}];
	\draw  ({(cos(120)-1)/2}, {-sin(60)/2}) circle [radius = {0.15*sin(60)}];
	
	\draw (v1')--(v2') -- (v3') --(v4') --(v5') --(v6') -- (v1');
	
\end{scope}

\end{tikzpicture}}
	\caption{On the left an embedding of the star graph $K_{1,6}$ is shown. Let $v$ be the middle vertex. In the right image Truncation is applied to the embedded graph. The six vertices in the middle form the vertex-shadow of $v$. The set of the two red vertices breaks $v$.}
	\label{fig:vertex_image}
\end{figure}

	Figure~\ref{fig:vertex_image} shows an example of a set of two vertices in a vertex-shadow that may break the corresponding vertex.

\begin{lemma}\label{lem:verteximage_cycle}
	Let $O$ be a $c3$-lopsp-operation different from Dual, and let $G$ be a simple embedded graph. If $v_0$ is of type 2, then the vertices of any vertex-shadow form a (simple) cycle, and the intersection of two vertex-shadows is either empty, one vertex, or one edge.
\end{lemma}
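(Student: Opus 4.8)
The plan is to read both assertions off the barycentric subdivision. Since $v_0$ is of type $2$, the vertex $v'$ of $B_{O(G)}$ corresponding to a vertex $v$ of $G$ is of type $2$, and by Lemma~\ref{lem:image_singleton_or_face} the vertex-shadow $S_O(v)$ is the vertex set of a face $f_v$ of $O(G)$. Because $B_{O(G)}$ is the barycentric subdivision of $O(G)$, the facial walk of $f_v$ is exactly the link of $v'$: its type-$0$ vertices are the elements of $S_O(v)$, and two consecutive ones are joined through a type-$1$ vertex by two type-$2$ edges. Each occurrence of a type-$0$ vertex $w$ on this walk contributes one type-$1$ edge from $v'$ to $w$, and each occurrence of a boundary edge contributes one type-$0$ edge from $v'$ to its midpoint. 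Hence the whole statement reduces to controlling short cycles through $v'$ (and, for the intersection part, through a pair $v',w'$) in $B_{O(G)}$, which is precisely what Lemma~\ref{lem:c3char} forbids. Note also that since $v_0$ is of type $2$, the operation $O$ is neither Dual (by assumption) nor the identity (whose $v_0$ is of type $0$ by Lemma~\ref{lem:dual_char}), so by Lemma~\ref{lem:dual_char} there is no edge between $v_0$ and $v_2$.

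For the first claim, suppose the walk is not a simple cycle of length at least three. Then either some vertex of the walk is met by $v'$ along two distinct edges, yielding a $2$-cycle in $B_{O(G)}$ (this covers a repeated vertex, a repeated boundary edge, and a loop), or the walk has length two, a digon bounded by edges $e,e'$ between vertices $w_1,w_2$, in which case $w_1,m_e,w_2,m_{e'}$ is a $4$-cycle whose interior on the $f_v$-side contains the type-$2$ vertex $v'$ together with its four incident edges and is therefore not trivial. In each case I want to exhibit the offending cycle inside two copies of $O_P$ that share exactly one side and not their other point, and then invoke Lemma~\ref{lem:c3char}. The edges involved map under $\pi$ to type-$1$ edges of $O$ incident to $v_0$; because there is no edge between $v_0$ and $v_2$, these run to honest type-$0$ neighbours of $v_0$, so the two copies of $O_P$ carrying them genuinely meet in a single $1$- or $2$-side rather than collapsing as they do for Dual. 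Applying $O$ along a cut-path of minimal length (Lemma~\ref{lem:minimal_path}) and using Lemma~\ref{lem:intersecting_edges} to fix the relative position of the two edges, the configuration falls under the hypothesis of Lemma~\ref{lem:c3char}, a contradiction.

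For the intersection claim, fix $v\neq w$; then $v'$ and $w'$ are distinct type-$2$ vertices and $S_O(v)\cap S_O(w)$ is the set of common vertices of $f_v$ and $f_w$. If $f_v$ and $f_w$ share exactly one edge $e=\{a,b\}$, then $v',a,w',b$ is a $4$-cycle whose interior is the single type-$1$ vertex $m_e$ with its four incident edges, so this $4$-cycle is trivial and the overlap is the admissible case of one edge. Any larger overlap forces a forbidden cycle: two common vertices that are not the two ends of a single shared edge give a non-trivial $4$-cycle $v',a,w',b$, and two distinct common edges (or a common vertex together with a pinch) give a $2$-cycle or a further non-trivial $4$-cycle through $v'$ and $w'$. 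As before I localise each such cycle to two copies of $O_P$ meeting in exactly one side and contradict Lemma~\ref{lem:c3char}, so the intersection is empty, a single vertex, or a single edge.

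The main obstacle is the localisation step. Turning ``$w$ occurs twice on the boundary of $f_v$'' or ``$f_v$ and $f_w$ overlap in too much'' into two concrete copies of $O_P$ that share exactly one side and not their other point requires tracing, through $\pi$, in which copies of $O_P$ the relevant edges lie and then checking that those copies meet in precisely one side. This is where the simplicity of $G$ (no loops or multiple edges, hence none between $0$- and $2$-vertices in the local picture) and the non-Dual hypothesis via Lemma~\ref{lem:dual_char} are used repeatedly. The bookkeeping of the degenerate boundary cases (loops, digons and repeated edges) and the verification that each resulting $4$-cycle is non-trivial, while the single-shared-edge $4$-cycle is trivial, are the most delicate points of the argument.
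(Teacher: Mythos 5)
Your strategy coincides with the paper's---reduce both claims to the existence of a $2$-cycle or non-trivial $4$-cycle in $B_{O(G)}$, place that cycle inside two copies of $O_P$ sharing exactly one side, and contradict Lemma~\ref{lem:c3char}---but the step you defer as ``the main obstacle'' is precisely where all the content of the proof lies, and your proposal never carries it out. For the cycle claim the missing argument is short but essential: every edge of the offending $2$-cycle is incident with the $0$-point $v'$, so both edges lie in double chambers having $v$ as a $0$-point; the other vertex $x$ of the $2$-cycle is not a $2$-point (Lemma~\ref{lem:dual_char}) and not a $0$-point, and since $G$ has no multiple edges the double chambers involved are either equal or share a $1$- or $2$-side---only then is the configuration literally the one to which Lemma~\ref{lem:c3char} applies.

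For the intersection claim the gap is more serious, because the localisation genuinely fails without an idea you do not mention: one must first prove that two vertex-shadows can only share a vertex $x$ when $v$ and $w$ are \emph{adjacent} in $G$. The paper gets this from Lemma~\ref{lem:intersecting_edges}: since $\pi(x)\neq v_0,v_2$, the vertex $x$ lies on the $1$-side shared by a double chamber containing $v$ and one containing $w$, and that lemma (applied to $\pi(x)$ and $v_0$ as vertices of the cut-path) shows the edges $\{x,v\}$ and $\{x,w\}$ cannot coexist when $v\not\sim w$. Only after this does everything live in the single diamond around the edge $\{v,w\}$ (simplicity of $G$ gives exactly two double chambers containing both $v$ and $w$), Lemma~\ref{lem:minimal_path} forces the edges $\{x,v\},\{x,w\},\{y,v\},\{y,w\}$ into that diamond, and your $4$-cycle $v',x,w',y$ becomes a $4$-cycle in two copies of $O_P$ sharing one side, where triviality yields the shared edge, and a third shared vertex $z$ yields at least one non-trivial $4$-cycle. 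Without the adjacency step, your cycle $v',a,w',b$ may run through double chambers that share no side at all, and Lemma~\ref{lem:c3char} simply does not apply to it; ``tracing through $\pi$'' cannot repair this, because the obstruction is not bookkeeping but the a priori possibility that $v$ and $w$ are far apart in $G$. (Your treatment of the digon face and your observation that the $4$-cycle coming from a single shared edge is trivial while all other overlaps give non-trivial short cycles are correct and consistent with the paper; it is only the localisation, i.e.\ the actual core of the proof, that is missing.)
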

\begin{proof}
	Let $P$ be a minimal cut-path of $O$ and assume that there is a vertex $v$ in $G$ such that the face corresponding to $v$ is not a cycle. This is the case if and only if there is a cycle of length two in $B_{O(G)}$ with $v$ as one of its vertices. Let $x$ be the other vertex. The cycle is completely contained in the double chambers with 0-point $v$, as each of the edges is incident with $v$. By Lemma~\ref{lem:dual_char}, $x$ is not a 2-point. As there are no multiple edges in $G$, this implies that either both edges of the cycle are in the same double chamber, or the edges are in double chambers sharing a 1- or 2-side. In any case Lemma~\ref{lem:c3char} implies that $O$ is not $c3$, a contradiction. It follows that the vertices of $S_O(v)$ form a cycle.
	
	Assume that a vertex $x$ is in two different vertex-shadows $S_O(v)$ and $S_O(w)$. First assume that $v$ and $w$ are not adjacent in $G$. It is clear that $\pi(x)$ is not $v_2$ or $v_0$, so $x$ is on the 1-side shared by a double chamber containing $v$ and a double chamber containing $w$. It follows from Lemma~\ref{lem:intersecting_edges} with $\pi(x)$ and $v_0$ as the vertices of the cut-path that the edges $\{x,v\}$ and $\{x,w\}$ cannot both exist. It follows that $v$ and $w$ are adjacent vertices of $G$.

	As $G$ has no multiple edges, there are exactly two double chambers that contain both $v$ and $w$. Those double chambers form the diamond around the edge $\{v,w\}$. The vertex $x$ is in that diamond. It follows that $\{v,x\}$ and $\{w,x\}$ are also in the diamond: If $x$ is not on a 1-side this is trivial, if $x$ is on a 1-side this follows by Lemma~\ref{lem:minimal_path}.
	Assume that there is another vertex $y$ in both $S_O(v)$ and $S_O(w)$. The edges $\{y,v\}$ and $\{y,w\}$ are also in the diamond around $\{v,w\}$. It now follows that the edges $\{v,x\}$, $\{x,w\}$, $\{w,y\}$ and $\{y,v\}$ induce a 4-cycle in the $P$-diamond of $O$. By Definition~\ref{def:c3} this cycle must be trivial. It follows that $x$ and $y$ are adjacent in $O(G)$ and the edge between them is both in face $v$ and face $w$.
	
	Now assume that $S_O(v)$ and $S_O(w)$ share more than the edge $\{x,y\}$. Then in the diamond around $\{v,w\}$ there is another vertex $z$ of type 0 or 1 that is not the vertex corresponding to the edge $\{x,y\}$ and there are also edges $\{z,v\}$ and $\{z,w\}$. Now there are three 4-cycles in the diamond around $\{v,w\}$: one containing $x$ and $y$, one containing $y$ and $z$, and one containing $x$ and $z$. All of these 4-cycles share $v$ and $w$. As $z$ is not the vertex corresponding to the edge $\{x,y\}$, at least one of the cycles is non-trivial, which is a contradiction with Lemma~\ref{lem:c3char}.
\end{proof}

\begin{corollary}\label{cor:breaking_vertices}
	Let $O$ be a $c3$-lopsp-operation different from Dual, and let $G$ be a simple embedded graph whose vertices all have degree at least three. A set of two vertices of $O(G)$ can break at most two vertices of $G$. If $v_0$ is of type 2 then such a set can break at most one vertex. If $k\in\{1,2\}$ vertices are broken, at least $k$ vertices of the breaking set are in the vertex-shadows of the broken vertices.
\end{corollary}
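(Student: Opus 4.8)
The plan is to split on the type of $v_0$, since by Lemma~\ref{lem:image_singleton_or_face} this determines whether vertex-shadows are singletons or the vertex sets of faces. In each case I would extract, directly from Definition~\ref{def:break_vertex}, a necessary condition on how $X$ must meet the shadows of the broken vertices, and then count.

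First I would handle the case where $v_0$ is of type $0$. Here every vertex-shadow $S_O(v)$ is a single vertex $s_v$ of $O(G)$, and distinct vertices of $G$ correspond to distinct $0$-points of $B_{O(G)}$, hence to distinct vertices $s_v$. Unwinding the definition of breaking for a singleton shadow, $X$ breaks $v$ if and only if $s_v\in X$: the first alternative reduces to $s_v\in X$, while the second (a disconnection of $S_O(v)\setminus X$) is vacuous for a set with at most one element. Since $|X|=2$ and the $s_v$ are distinct, at most two vertices can be broken, and if $k\in\{1,2\}$ vertices are broken then the $k$ distinct elements $s_v$ lie in $X$, each inside the corresponding shadow, giving the required $k$ vertices of $X$ in the vertex-shadows.

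Next I would treat the case where $v_0$ is of type $2$, claiming that then at most one vertex is broken. By Lemma~\ref{lem:verteximage_cycle}, and using that $G$ has minimum degree at least three so that each shadow has length at least three, every vertex-shadow $S_O(v)$ is a simple cycle and any two distinct shadows meet in at most one edge. The crucial observation is that breaking $v$ forces $X\subseteq S_O(v)$ with the two vertices of $X$ \emph{non-adjacent} on the cycle $S_O(v)$: the shadow cannot lie entirely in $X$ because its length exceeds $|X|=2$; and if $X$ met the cycle in at most one vertex, or in two adjacent vertices, then $S_O(v)\setminus X$ would still be a connected path sitting inside $O(G)\setminus X$, so all of $S_O(v)\setminus X$ would lie in one component of $O(G)\setminus X$ and $v$ would not be broken. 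Here I must be careful that the disconnection in the definition is measured in all of $O(G)\setminus X$, not merely inside the cycle, but the cycle itself keeps its surviving vertices connected unless two of its vertices are removed.

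Finally, to conclude the type-$2$ case I would assume for contradiction that $X$ breaks two vertices $v$ and $w$. Then $X\subseteq S_O(v)\cap S_O(w)$, and since this intersection is at most one edge, $X$ must equal that edge, say $X=\{x,y\}$ with $\{x,y\}$ an edge of both cycles; but then $x$ and $y$ are adjacent on $S_O(v)$, contradicting the non-adjacency required for $v$ to be broken. Hence at most one vertex is broken when $v_0$ is of type $2$, and together with the type-$0$ bound this yields the global bound of at most two. In either case the recorded necessary condition ($s_v\in X$, respectively $X\subseteq S_O(v)$) supplies at least $k$ vertices of $X$ inside the vertex-shadows of the broken vertices. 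I expect the main obstacle to be the type-$2$ analysis: pinning down the exact necessary condition for breaking a cycle and then combining the ``two non-adjacent cycle vertices'' requirement with the ``distinct shadows meet in at most one edge'' bound of Lemma~\ref{lem:verteximage_cycle}.
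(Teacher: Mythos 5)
Your proposal is correct and follows essentially the same route as the paper's proof: a case split on the type of $v_0$, the immediate singleton argument when $t(v_0)=0$, and, when $t(v_0)=2$, the combination of the minimum-degree-three bound with Lemma~\ref{lem:verteximage_cycle} to show that breaking a vertex forces two non-adjacent vertices of the cycle $S_O(v)$ into $X$, so that two broken vertices would force their shadows to share two non-adjacent vertices, contradicting the intersection bound of that lemma. Your write-up just makes explicit a few details (the ``measured in $O(G)\setminus X$, not merely inside the cycle'' point and the final counting claim) that the paper leaves implicit.
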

\begin{proof}
	If $v_0$ is of type 0 then every vertex-shadow consists of only one vertex and a vertex of $O(G)$ can be in at most one vertex-shadow. In this case the statement follows immediately.
	
	Assume that $v_0$ is of type 2. As there are no vertices of degree 1 or 2 in $G$, every vertex-shadow contains at least three vertices, so not every vertex of a vertex-shadow can be in the breaking set. By Lemma~\ref{lem:verteximage_cycle} the vertices of every vertex-shadow form a cycle, which implies that every broken vertex has at least two non-adjacent vertices of the breaking set in its vertex-shadow. If there would be two broken vertices then they would have two non-adjacent vertices in common, which contradicts Lemma~\ref{lem:verteximage_cycle}. It follows that if $v_0$ is of type 2, two vertices of $O(G)$ can break at most one vertex of $G$.
\end{proof}

\subsection*{Breaking edges}

To preserve the 3-connectivity of a graph, it is not sufficient that its vertices are not broken too easily. The structure of the edges between them must also be preserved. The next definition describes what it means for vertices in $O(G)$ to `break' edges of $G$. 

\begin{definition}\label{def:edge_breaking}
	For an embedded graph $G$ and a lopsp-operation $O$, let $X$ be a set of vertices of $O(G)$. 
	We say that $X$ \emph{breaks the edge}\index{break!an edge} $\{v,w\}$ if there are no vertices $a\in S_O(v)$ and $b\in S_O(w)$ that are in the same component of $O(G)\setminus X$, i.e. there are no paths in $O(G)\setminus X$ between vertices of $S_O(v)$ and $S_O(w)$.
	If either $S_O(v)$ or $S_O(w)$ only contains vertices of $X$, then $X$ also breaks $\{v,w\}$. 
	
	A lopsp-operation $O$ is \emph{edge-breaking} if $v_2$ and $v_1$ are adjacent vertices in $O$ and $v_2$ is of type 0. It follows that $v_1$ is of type 1 or type 2. Both these possibilities are shown in Figure \ref{fig:edge_breakers}. Edge-breaking operations with a $v_1$-vertex of type $i$ are said to be of \emph{type} $i$.
	A lopsp-operation that is not edge-breaking is \emph{edge-preserving}.
	
	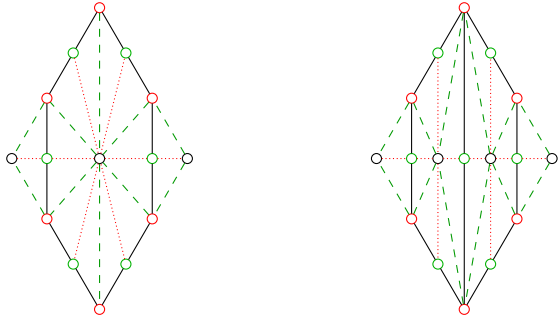
\begin{figure}	
			\centering
			\begin{tikzpicture}
\tikzset{type0/.style={shape=circle, draw=red, scale=0.4, fill=white}}
\tikzset{type1/.style={shape=circle, draw=black!30!green, scale=0.4, fill=white}}
\tikzset{type2/.style={shape=circle, draw=black, scale=0.4, fill=white}}
\tikzset{0edge/.style={draw=red, densely dotted}}
\tikzset{1edge/.style={draw=black!40!green, dashed}}
\tikzset{2edge/.style={}}

\node[type2] (v0) at (-{2*tan(30},0) {};
\node[type2] (v0') at ({2*tan(30},0) {};
\node[type2] (v1) at (0,0) {};
\node[type0] (v2') at (0,-2) {};
\node[type0] (v2) at (0,2) {};

\node[type1] (e1) at ({1.2*tan(30},0) {};
\node[type1] (e1') at ({-1.2*tan(30},0) {};
\node[type0] (x') at ({-1.2*tan(30},  0.8) {};
\node[type0] (x'') at ({-1.2*tan(30}, -0.8) {};
\node[type0] (x''') at ({1.2*tan(30},  -0.8) {};
\node[type0] (x) at ({1.2*tan(30},  0.8) {};
\node[type1] (e') at ({-0.6*tan(30}, 1.4) {};
\node[type1] (e) at ({0.6*tan(30}, 1.4) {};
\node[type1] (e'') at ({-0.6*tan(30}, -1.4) {};
\node[type1] (e''') at ({0.6*tan(30}, -1.4) {};

\begin{scope}[2edge]
\draw (e1) -- (x) -- (e) -- (v2) -- (e') -- (x') -- (e1') -- (x'') -- (e'') -- (v2') -- (e''') -- (x''') -- (e1);
\end{scope}

\begin{scope}[1edge]
\draw (v0') -- (x) -- (v1) -- (x') -- (v0);
\draw (v0') -- (x''') -- (v1) -- (x'') -- (v0);
\draw (v2) -- (v1) -- (v2'); 

\end{scope}

\begin{scope}[0edge]
\draw (e') -- (v1) -- (e''); 
\draw (e''') -- (v1) -- (e); 
\draw (v0) -- (e1') -- (v1) -- (e1) -- (v0');
\end{scope}

\end{tikzpicture}
			\qquad \qquad \qquad
			\begin{tikzpicture}
\tikzset{type0/.style={shape=circle, draw=red, scale=0.4, fill=white}}
\tikzset{type1/.style={shape=circle, draw=black!30!green, scale=0.4, fill=white}}
\tikzset{type2/.style={shape=circle, draw=black, scale=0.4, fill=white}}
\tikzset{0edge/.style={draw=red, densely dotted}}
\tikzset{1edge/.style={draw=black!40!green, dashed}}
\tikzset{2edge/.style={}}

\node[type2] (v0) at (-{2*tan(30},0) {};
\node[type2] (v0') at ({2*tan(30},0) {};
\node[type1] (v1) at (0,0) {};
\node[type0] (v2') at (0,-2) {};
\node[type0] (v2) at (0,2) {};

\node[type1] (e1) at ({1.2*tan(30},0) {};
\node[type1] (e1') at ({-1.2*tan(30},0) {};
\node[type0] (x') at ({-1.2*tan(30},  0.8) {};
\node[type0] (x'') at ({-1.2*tan(30}, -0.8) {};
\node[type0] (x''') at ({1.2*tan(30},  -0.8) {};
\node[type0] (x) at ({1.2*tan(30},  0.8) {};
\node[type1] (e') at ({-0.6*tan(30}, 1.4) {};
\node[type1] (e) at ({0.6*tan(30}, 1.4) {};
\node[type1] (e'') at ({-0.6*tan(30}, -1.4) {};
\node[type1] (e''') at ({0.6*tan(30}, -1.4) {};
\node[type2] (f) at ({0.6*tan(30}, 0) {};
\node[type2] (f') at ({-0.6*tan(30}, 0) {};

\begin{scope}[2edge]
\draw (e1) -- (x) -- (e) -- (v2) -- (e') -- (x') -- (e1') -- (x'') -- (e'') -- (v2') -- (e''') -- (x''') -- (e1);
\draw (v2) -- (v1) -- (v2'); 
\end{scope}

\begin{scope}[1edge]
\draw (v0') -- (x) -- (f) -- (v2) -- (f') -- (x') -- (v0) ;
\draw (v0')-- (x''') -- (f) -- (v2') -- (f') -- (x'') -- (v0);
\end{scope}

\begin{scope}[0edge]
\draw (v0)-- (e1') -- (f')
		-- (v1) -- (f) -- (e1) -- (v0') ;
\draw (e') -- (f') -- (e''); 
\draw (e''') -- (f) -- (e); 
\end{scope}

\end{tikzpicture}
		\caption{This figure shows diamonds of two different edge-breaking operations. The left operation is of type 2 and the right one is of type 1.}
		\label{fig:edge_breakers}
	\end{figure}
\end{definition}

\begin{remark*}\label{rem:half_12}
	Except for Dual, edge-breaking operations always exist in pairs. For every edge-breaking operation $O_2$ of type 2, replacing the two copies of $\{v_1,v_2\}$ in a diamond of $O_2$ by a 4-cycle of type 1 with a vertex of type 1 in its interior gives an edge-breaking operation $O_1$ of type 1. Conversely for every edge-breaking operation of type 1 there is such a 4-cycle with $v_1$ in its interior that can be replaced by two edges to get an edge-breaking operation of type 2. Such a pair of operations $O_1$ and $O_2$ is illustrated in Figure~\ref{fig:edge_breakers}. Note that for any embedded graph $G$, $O_1(G)$ is the union of $O_2(G)$ and the dual of $G$.
\end{remark*}

The following lemmas explain the name `edge-breaking'. A \emph{type-$i$ walk} or \emph{type-$i$ path} is respectively a walk or path that only has edges of type $i$. A type-2 walk or path in $B_{O(G)}$ with endpoints of type 0 corresponds to a walk or path in $O(G)$.

\begin{definition}\label{def:edge-path}
	Let $O$ be a lopsp-operation with a cut-path $P$. 
	Consider the path $P_{v_{0,L},v_{1}}\cup P_{v_{1},v_{0,R}}$ in $O_P$. We remove $v_{0,L}$ and $v_{0,R}$ if those are of type 2. Then we replace every vertex $v$ of type 2 in the path by the type-2 walk along all its neighbours in the order of their appearance in the cyclic order around the vertex $v$. This uniquely defines a walk in $O_P$, which is the \emph{shadow-connecting walk} for $O$ with cut-path $P$.
	
	An \emph{edge-path} for $O$ is a type-2 path in a diamond of $O$ between vertices in the two different vertex-shadows that only contains vertices that are in at least one of the two copies of the shadow-connecting walk.

\end{definition}

\begin{lemma}\label{lem:edge-path}
	Let $O$ be a $c3$-lopsp-operation. 
	\begin{enumerate}
		\item For all cut-paths $P$ in $O$ there is an edge-path in the $P$-diamond of $O$.
		\item $O$ is edge-preserving if and only if for all cut-paths $P$ there exists an edge-path that does not contain either of the 2-points of the diamond.
	\end{enumerate}
\end{lemma}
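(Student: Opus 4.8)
The plan is to run everything through the shadow-connecting walk of Definition~\ref{def:edge-path}, applying $O$ with a cut-path $P$ of minimal length so that Lemma~\ref{lem:minimal_path} is available. Throughout I write $e=\{v,w\}$ for the edge whose diamond we work in, so that $v_{0,L}$ and $v_{0,R}$ correspond to $v$ and $w$.

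First I would prove part (1). The initial step is to check that the shadow-connecting walk $W$ is genuinely a type-2 walk. After the deletions and the reroutings only vertices of types $0$ and $1$ survive, and two consecutive survivors are joined by an edge which, by condition (2) of Definition~\ref{def:lopsp}, cannot join equal types and hence is a type-2 edge; each rerouting around a type-2 vertex follows the boundary of triangular faces and is therefore type-2 as well. Next I would pin down the endpoints of $W$ in the shadows. If $v_0$ is of type $0$ the endpoints are $v_{0,L},v_{0,R}$, which are exactly the one-element shadows $S_O(v),S_O(w)$. If $v_0$ is of type $2$ the endpoints are the type-0 vertices adjacent to $v_{0,L},v_{0,R}$ exposed by the deletion; by Definition~\ref{def:0neighbourhood} these lie in $N_0(v_{0,L})$ and $N_0(v_{0,R})$, i.e.\ in $S_O(v)$ and $S_O(w)$, which by Lemma~\ref{lem:verteximage_cycle} are cycles. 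A minimal subpath of $W$ between the two shadows is then an edge-path. The genuinely degenerate case is Dual, where the two shadows already meet in a common 2-point and the edge-path collapses to that single vertex; this has to be isolated using Lemma~\ref{lem:dual_char}.

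For part (2) I would first settle the easy half. If $v_2$ has type $2$ then each 2-point is a type-2 vertex, whereas a type-2 path only meets vertices of types $0$ and $1$; hence every edge-path automatically avoids the 2-points, and by Definition~\ref{def:edge_breaking} an operation with $v_2$ of type $2$ is edge-preserving, so both sides of the equivalence hold. This reduces everything to the case $v_2$ of type $0$, where being edge-preserving is precisely $v_1\not\sim v_2$. For the direction ``edge-preserving $\Rightarrow$ an avoiding edge-path'', I would use that the diamond is a disk carrying its two 2-points on the outer boundary while the $2$-side runs through the middle; when $v_2$ is not adjacent to $v_1$, the central portion of $W$ (the passage through, or the rerouting around, $v_1$) never meets a copy of $v_2$, so the extracted path can be pushed through the middle and kept off the boundary 2-points. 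For the converse, ``edge-breaking $\Rightarrow$ no avoiding edge-path'', I would show that the copy of $v_2$ incident with $v_1$ is a bottleneck separating $S_O(v)$ from $S_O(w)$ among the admissible vertices; the two edge-breaking types of Figure~\ref{fig:edge_breakers} are handled separately, the type-2 operation forcing $W$ through $v_2$ while it encircles the type-2 vertex $v_1$, the type-1 operation forcing it through $v_2$ while it encircles a type-2 vertex adjacent to both $v_1$ and $v_2$.

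The hard part will be this converse. The local adjacency $v_1\sim v_2$ must be upgraded to the global claim that every type-2 path from $S_O(v)$ to $S_O(w)$ confined to shadow-connecting-walk vertices runs through a 2-point, and this is where the planarity of the diamond and the Jordan-curve bookkeeping of Lemma~\ref{lem:intersecting_edges} (applied to the two copies of $P$ inside each $O_P$) carry the argument, with the two edge-breaking types and the Dual degeneracy each checked by hand. A secondary difficulty, already in part (1), is making the endpoint analysis watertight when $v_0$ has type $2$: one must verify that the deletion step leaves $W$ anchored at a genuine type-0 vertex of the shadow rather than at a stray type-1 vertex, which is where minimality of $P$ and Lemma~\ref{lem:minimal_path} are needed.
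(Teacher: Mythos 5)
Your proposal has two genuine gaps. The first is one of scope: both parts of the lemma are quantified over \emph{all} cut-paths $P$, and this matters downstream (Lemma~\ref{lem:diff_v0sets} invokes this lemma for the cut-paths produced by Lemma~\ref{lem:type2_cutpath}, which are not minimal), so you cannot open by fixing a cut-path of minimal length in order to have Lemma~\ref{lem:minimal_path} available. The paper's proof never uses minimality or Lemma~\ref{lem:minimal_path}; in particular the type-1--endpoint problem in part (1), which you explicitly defer to minimality, is solved directly and for every cut-path: if the first vertex $x$ of the shadow-connecting walk is of type 1, it is simply dropped, and since all faces are triangles the second vertex is of type 0, lies in the diamond, and is adjacent to the same copy of $v_0$, hence lies in $N_0(v_{0,L})$.

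The second gap is the central one, in the direction ``edge-preserving $\Rightarrow$ there is an edge-path avoiding the 2-points'' (in the essential case $t(v_2)=0$, $v_1\not\sim v_2$). Your claim that the ``central portion of $W$ (the passage through, or the rerouting around, $v_1$) never meets a copy of $v_2$, so the extracted path can be pushed through the middle'' only controls the walk near $v_1$. The real obstruction sits elsewhere: an \emph{interior} type-2 vertex $v$ of the 2-side (a copy of a type-2 vertex of $P_{v_0,v_1}$ other than the copies of $v_0,v_1$) may be adjacent to the 2-point $v_2^D$, and then the detour of the shadow-connecting walk $Q_D$ around $v$ is forced through $v_2^D$; the hypothesis $v_1\not\sim v_2$ does nothing to prevent this. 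The paper resolves exactly this: each such detour is rerouted through the other copy $D'$, and the rerouting is legitimate because if the detours around $v$ hit the 2-points on \emph{both} sides, then (using the mirror copy $v'$ of $v$ on the 2-side) one obtains the edges $\{v,v_2^D\},\{v,v_2^{D'}\},\{v',v_2^D\},\{v',v_2^{D'}\}$, hence a 4-cycle in two copies of $O_P$ with $v_1$ in its interior; by Lemma~\ref{lem:c3char} this cycle must be trivial, which forces $v_1$ to be of type 1 and adjacent to $v_2$, contradicting edge-preservation. This use of the $c3$ property is the heart of the proof and is entirely absent from your outline; without it the ``push through the middle'' step fails. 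Incidentally, you also misplace the difficulty: the converse, ``edge-breaking $\Rightarrow$ every edge-path meets a 2-point'', is the short direction --- the path $v_2^D,v_1,v_2^{D'}$ separates the diamond, so an edge-path avoiding the 2-points must contain $v_1$, which must then be of type 1, and its only type-0 neighbours in the diamond are the 2-points, a contradiction; no case analysis via Lemma~\ref{lem:intersecting_edges} is needed. Your reduction disposing of the case $t(v_2)=2$ at the start is correct and consistent with the paper.
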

\begin{proof}
	\begin{enumerate}
		\item Let $P$ be a cut-path of $O$. 
		If the first and last vertex of the shadow-connecting walk are of type 0 then the general statement follows immediately. If the first (or similarly the last) vertex $x$ of the shadow-connecting walk is of type 1, then the second one $y$ is of type 0, it is in the diamond and it is adjacent to the same 0-point as $x$. It follows that if we remove the first and the last vertex of $Q$, the shadow-connecting walk induces a type-2 walk in the $P$-diamond from a vertex in $N_0(v_{0,L})$ to a vertex in $N_0(v_{0,R})$. This implies the existence of a path with the required properties.
		
		\item Let $P$ be any cut-path.
		Assume first that $O$ is edge-breaking. An edge-path must contain one of the 2-points or $v_1$ by the definition of an edge-breaking operation and the Jordan curve theorem. If an edge-path does not contain the 2-points then $v_1$ is in the edge-path. This implies that $v_1$ is of type 1. The endpoints of an edge-path are of type 0, so the neighbours of type 0 of $v_1$ must also be in the edge-path. As $O$ is edge-breaking, these neighbours are the 2-points. It follows that for an edge-breaking operation, every edge-path contains at least one of the 2-points of the diamond, which proves one implication.
		
		Conversely, assume that $O$ is edge-preserving. Let $D$ and $D'$ be the two copies of $O_P$ in the diamond.
		The shadow-connecting walk $Q$ induces a walk $Q_{D}$ in $D$ and a walk $Q_{D'}$ in $D'$. Let $v_2^D$ be the 2-point of $D$ and let $v_2^{D'}$ be the 2-point of $D'$. If $v_2^D$ is not in $Q_D$ we are done. Assume that $v_2^D$ is in $Q_D$. By definition of a shadow-connecting walk, $v_2^D$ is adjacent to a 2-vertex $v$ in $D\cap D'$. This vertex $v$ is not $v_1$, as $v_2$ is of type 0 and $O$ is edge-preserving. It is also not $v_0$, for then $v_2\in N_0(v_0)$ and by Lemma~\ref{lem:dual_char} $O$ would be the edge-breaking operation Dual. It follows that $\pi(v)\neq v_0,v_1$. 
		
		Let $x$ and $y$ be the neighbours of $v$ in $D\cap D'$ and let $v'\neq v$ be the unique vertex in $D\cap D'$ such that $\pi(v)=\pi(v')$. Note that $x$ and $y$ are in both $Q_D$ and $Q_{D'}$ by definition of a shadow-connecting walk. Assume that the segment of $Q_{D'}$ between $y$ and $x$ contains $v_2^{D'}$. Then there is an edge $\{v,v_2^{D'}\}$ and by symmetry there are also edges $\{v',v_2^{D}\}$ and $\{v',v_2^{D'}\}$. It follows that $v,v_2^D,v',v_2^{D'}$ is a 4-cycle in two adjacent copies of $O_P$ that has $v_1$ in its interior. By Lemma~\ref{lem:c3char}, this cycle must be trivial, which implies that $v_1$ is of type 1 and it is adjacent to $v_2$. This is a contradiction as $v_2$ is of type 0 and $O$ is edge-preserving. It follows that the segment of $Q_{D'}$ between $x$ and $y$ does not contain $v_2^{D'}$. Now we can replace the segment of $Q_D$ between $x$ and $y$ by the segment of $Q_{D'}$. Doing such a replacement for every appearance of $v_2^D$ in $Q_D$ and removing the first or last vertex if they are of type 1, we get a type-2 walk in the $P$-diamond of $O$ between a vertex in $N_0(v_{0,L})$ and a vertex in $N_0(v_{0,R})$ that does not contain 2-points. The lemma follows.
	\end{enumerate}
\end{proof}

\begin{lemma}\label{lem:2_broken}
	Let $O$ be a $c3$-lopsp-operation different from Dual and let $G$ be a simple embedded graph whose vertices all have degree at least three. Furthermore, assume that $O$ is edge-preserving or that $G^*$ is simple. Let $X$ be a set of two vertices of $O(G)$. If $X$ breaks $k$ vertices, then it breaks at most $2-k$ edges that are not incident with one of the broken vertices.
\end{lemma}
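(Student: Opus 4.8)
The plan is a charging argument. Write $X=\{p,q\}$, let $v_1,\dots,v_k$ be the vertices of $G$ broken by $X$, and let $m$ be the number of edges broken by $X$ that are not incident with any $v_i$; the goal is $k+m\le 2$. To each broken vertex and to each such broken edge I will attach a vertex of $X$, and I will show the attachment can be made injective, so that the number of broken objects is at most $|X|=2$. Throughout I apply $O$ with a cut-path of minimal length so that Lemma~\ref{lem:minimal_path} is available, and I split according to the type of $v_0$, since by Lemma~\ref{lem:image_singleton_or_face} this decides whether each vertex-shadow is a single vertex or a facial cycle.

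The vertices are handled by the results already in place. Corollary~\ref{cor:breaking_vertices} gives $k\le 2$ and tells us that at least $k$ vertices of $X$ lie in $\bigcup_i S_O(v_i)$, so I attach to each broken vertex a vertex of $X$ lying in its shadow. If $v_0$ is of type $2$ the same corollary improves this to $k\le 1$; moreover a vertex-shadow is then a cycle (Lemma~\ref{lem:verteximage_cycle}), and breaking such a cycle requires two non-adjacent vertices of $X$ on it, so when $k=1$ both vertices of $X$ are consumed on the single broken shadow. Thus in the type-$2$ case the essential situation is $k=0$, where I must show $m\le 2$; in the type-$0$ case the $k$ shadow-singletons in $X$ are distinct, and it remains to bound $m$ by the number $2-k$ of remaining vertices of $X$.

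The crux is the edges, and this is where the hypothesis enters. For a broken edge $\{v,w\}$ not incident with a broken vertex, $S_O(v)$ and $S_O(w)$ are intact, and by Lemma~\ref{lem:edge-path} there is an edge-path in the diamond around $\{v,w\}$ from $S_O(v)$ to $S_O(w)$; since the edge is broken this path meets $X$, and I attach such a meeting vertex. I must then show two things: that these edge-charges are disjoint from the vertex-charges, and that distinct broken edges receive distinct vertices of $X$. If $O$ is edge-preserving, Lemma~\ref{lem:edge-path}(2) lets me choose in each diamond an edge-path avoiding both $2$-points; its endpoints lie in the intact shadows of $v$ and $w$ and its internal type-$0$ vertices are not copies of $v_0$ (the only $v_0$-copies in the diamond are its two $0$-points), so the meeting vertex is neither a shadow-singleton breaking a vertex nor an endpoint of another edge's path. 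Distinct such edge-paths then meet only in shadow-vertices, and the planar structure of the diamonds together with Lemma~\ref{lem:intersecting_edges} shows that a single vertex of $X$ cannot disconnect two of them at once, giving injectivity and $m\le 2-k$. If instead $O$ is edge-breaking and different from Dual, then $v_2$ is of type $0$ and every edge-path runs through a $2$-point, i.e.\ a face-vertex; breaking $\{v,w\}$ then forces \emph{both} of its edge-paths, one through each incident face, to meet $X$, so no single face-vertex breaks an edge by itself, and the simplicity of $G^*$ (two faces share at most one edge) bounds the number of edges that the face-vertices of $X$ can break, again yielding $k+m\le 2$.

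I expect the injectivity of the third paragraph to be the main obstacle: ruling out that the two vertices of $X$ conspire to cut more edge-paths, or to cut an edge-path while also breaking a vertex, than their number allows. Making this precise requires analysing how the two diamonds of a broken edge sit inside $O(G)$ and how their edge-paths can be cut simultaneously, via the Jordan-curve argument behind Lemma~\ref{lem:intersecting_edges} in the edge-preserving case and the dual-simplicity of $G$ in the edge-breaking case, with Lemma~\ref{lem:dual_char} used to exclude the degenerate edge between $v_0$ and $v_2$. The remaining work—the vertex/edge bookkeeping and the type-$2$ reduction to $k=0$—is comparatively routine once this non-sharing is established.
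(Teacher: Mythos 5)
Your overall shape (charging each broken vertex and each broken edge to a distinct element of $X$) matches the paper's intent, and you correctly identify Corollary~\ref{cor:breaking_vertices}, Lemma~\ref{lem:edge-path} and the edge-preserving/$G^*$-simple dichotomy as the ingredients. But the step you yourself flag as the main obstacle --- injectivity of the edge-charges --- is a genuine gap, and the mechanism you propose for closing it does not work. You charge each broken edge to a vertex of $X$ on a \emph{single} chosen edge-path, and claim that Lemma~\ref{lem:intersecting_edges} plus the planar structure of the diamonds prevents one vertex of $X$ from serving two edges. That is false as stated: a vertex of $B_{O(G)}$ that is not a $0$- or $2$-point can lie in two double chambers, and those two double chambers can belong to the diamonds of two \emph{different} edges (they share a $1$-side), so a single vertex of $X$ can lie on the edge-paths of two distinct broken edges. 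With one path per edge you then only get ``two vertices cover at most four edges'', which is no contradiction to $3-k$ broken edges. The paper avoids this by requiring that \emph{both} shadow-connecting walks of each broken edge (one through each incident face) be hit by $X$; this gives $2(3-k)$ walks to cover against at most $2(2-k)$ double-chamber memberships for non-point vertices, forcing some vertex of $X$ outside the broken shadows to be a $2$-point, after which the $2$-point-free edge-path of Lemma~\ref{lem:edge-path} (edge-preserving case) or the resulting loop or repeated shared edges in $G^*$ (edge-breaking case) finishes the argument. Your plan never produces this factor of two, and without it the counting does not close.

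A second, smaller gap: when $t(v_0)=2$ you reduce to $k=0$ by saying that a broken vertex ``consumes'' both vertices of $X$, but the lemma still allows one broken edge when $k=1$, so you must rule out that a vertex of $X$ lying on the shadow-cycle $S_O(v)$ of the broken vertex $v$ also lies on a shadow-connecting walk in a diamond whose $0$-points differ from $v$. This is not automatic (such a vertex need not be a copy of $v_0$) and is exactly the first part of the paper's proof, established by a separate Jordan-curve argument via Lemma~\ref{lem:intersecting_edges}; your sketch only addresses the $t(v_0)=0$ case, where shadows are singleton $0$-points.
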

\begin{proof}
	First we prove that a vertex $x$ in a vertex-shadow $S_O(v)$ cannot be in a shadow-connecting walk in a double chamber with 0-points different from $v$. 
	
	For $x$ to be in $S_O(v)$, it must be in a double chamber $D$ with 0-point $v$. Assume that $x$ is also in a shadow-connecting walk $Q_{D'}$ in a double chamber $D'$ with 0-points $u\neq v$ and $w\neq v$. As $O$ is not Dual and $x$ is in a vertex-shadow, $x$ is not a 2-point. As there are no edges between the two 0-points of a double chamber, $x$ is also not a 0-point. Our vertex $x$ is in at least two different double chambers -- $D$ and $D'$ -- with different 1-points. It follows that it is on the shared 1-side of $D$ and $D'$, but it is not one of the points of that side. Assume w.l.o.g.\ that $u$ is the 0-point of the shared 1-side. This situation in shown in the left image of Figure~\ref{fig:2_broken}. As $x$ is in $S_O(v)$, there is an edge in $D$ between $v$ and $x$ which is on the opposite 1-side. It now follows from Lemma~\ref{lem:intersecting_edges} that any path from $x$ to $w$ in $D'$ also contains $u$ or the vertex $x'\neq x$ in $D'$ with $\pi(x')=\pi(x)$. As $Q_{D'}$ is a shadow-connecting walk there is an edge in $D'$ between $x$ and a 2-vertex $f\neq u$ on the 2-side of $D'$. As shown in the right image of Figure~\ref{fig:2_broken}, adding to $\{x,f\}$ only edges of $B_{O(G)}$ on the 2-side of $D'$ we get a path in $D'$ between $x$ and $w$ that does not contain $u$ or $x'$. This is a contradiction. 
	
	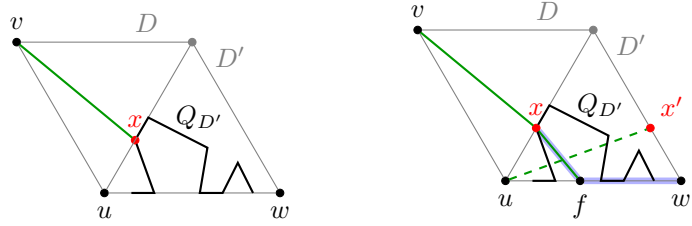
\begin{figure}		
		\begin{center}
			\begin{tikzpicture}[]
\tikzset{bluevertex/.style={shape=circle, draw=blue, scale=0.3, fill=blue}}
\tikzset{grayvertex/.style={shape=circle, draw=gray, scale=0.3, fill=gray}}
\tikzset{redvertex/.style={shape=circle, draw=red, scale=0.3, fill=red}}
\tikzset{blackvertex/.style={shape=circle, draw=black, scale=0.3, fill=black}}
\tikzset{blueedge/.style={draw=blue, thick}}
\tikzset{greenedge/.style={draw=black!40!green, thick}}
\tikzset{grayedge/.style={shape=circle, draw=gray}}

\tikzset{noNode/.style={draw=none}}

\node[gray] (D') at (0.5,1.8) {$D'$};
\node[gray] (D) at (-0.6,2.25) {$D$};
\node[black] (Q) at (0.1,1) {$Q_{D'}$};
\node[grayvertex] (v2) at (0,2) {};
\node[blackvertex,label={[label distance=-0.0cm]-90:{$\color{black}{w}$}}] (w) at ({2*tan(30)}, 0) {};
\node[blackvertex,label={[label distance=-0.0cm]-90:{$\color{black}{u}$}}] (u) at ({-2*tan(30)}, 0) {};

\node[blackvertex,label={[label distance=-0.0cm]90:{$\color{black}{v}$}}] (v) at ({-4*tan(30)}, 2) {};

\node[redvertex,label={[label distance=-0.0cm]90:{$\color{red}{x}$}}] (x) at ({-1.3*tan(30)}, 0.7) {};

\begin{scope}[grayedge]
\draw (v) -- (u) --(w);
\draw  (v) -- (v2) --(w) ;
\draw (v2) -- (u);
\end{scope}

\begin{scope}[greenedge]
\draw (v) -- (x);

\end{scope}

\begin{scope}[draw=black, thick]
\draw (-0.8,0) --(-0.5,0) -- (x) -- ({-1*tan(30)}, 1) -- (0.2,0.6) -- (0.1,0) -- (0.4,0) -- (0.6,0.4) -- (0.8,0)  ;
\end{scope}
\end{tikzpicture} \hspace{1cm}
			\begin{tikzpicture}[]
\tikzset{bluevertex/.style={shape=circle, draw=blue, scale=0.3, fill=blue}}
\tikzset{grayvertex/.style={shape=circle, draw=gray, scale=0.3, fill=gray}}
\tikzset{redvertex/.style={shape=circle, draw=red, scale=0.3, fill=red}}
\tikzset{blackvertex/.style={shape=circle, draw=black, scale=0.3, fill=black}}
\tikzset{blueedge/.style={draw=blue, thick}}
\tikzset{greenedge/.style={draw=black!40!green, thick}}
\tikzset{grayedge/.style={shape=circle, draw=gray}}

\tikzset{noNode/.style={draw=none}}

\node[gray] (D') at (0.5,1.8) {$D'$};
\node[gray] (D) at (-0.6,2.25) {$D$};
\node[black] (Q) at (0.1,1) {$Q_{D'}$};
\node[grayvertex] (v2) at (0,2) {};
\node[blackvertex,label={[label distance=-0.0cm]-90:{$\color{black}{w}$}}] (w) at ({2*tan(30)}, 0) {};
\node[blackvertex,label={[label distance=-0.0cm]-90:{$\color{black}{u}$}}] (u) at ({-2*tan(30)}, 0) {};

\node[blackvertex,label={[label distance=-0.0cm]90:{$\color{black}{v}$}}] (v) at ({-4*tan(30)}, 2) {};
\node[blackvertex,label={[label distance=-0.0cm]-90:{$\color{black}{f}$}}] (f) at ({-0.3*tan(30)}, 0) {};
\node[redvertex,label={[label distance=-0.0cm]90:{$\color{red}{x}$}}] (x) at ({-1.3*tan(30)}, 0.7) {};
\node[redvertex,label={[label distance=-0.02cm]70:{$\color{red}{x'}$}}] (x') at ({1.3*tan(30)}, 0.7) {};

\draw[draw=blue, line width=2.5pt, opacity=0.3] (x) -- (f) -- (w);

\begin{scope}[grayedge]
\draw (v) -- (u) --(f) --(w);
\draw  (v) -- (v2) --(x') --(w) ;
\draw (v2) --(x) -- (u);
\end{scope}

\begin{scope}[greenedge]
\draw (v) -- (x) --(f);
\draw[dashed] (u) -- (x');
\end{scope}

\begin{scope}[draw=black, thick]
\draw (-0.8,0) --(-0.5,0) -- (x) -- ({-1*tan(30)}, 1) -- (0.2,0.6) -- (0.1,0) -- (0.4,0) -- (0.6,0.4) -- (0.8,0)  ;
\end{scope}

\end{tikzpicture} 
		\end{center}
		\caption{These images show an example of the walks and vertices in the first part of the proof of Lemma~\ref{lem:2_broken}. The left image shows the initial assumptions, in the right one the path that leads to a contradiction is highlighted in blue.}
		\label{fig:2_broken}
	\end{figure}
		
	Let $k$ be the number of vertices that are broken by $X$. Let $X'$ be the set of vertices in $X$ that are not in the vertex-shadow of any of the broken vertices. By Corollary~\ref{cor:breaking_vertices} $k$ is at most 2, and $|X'|\leq 2-k$. Assume that $X$ breaks $3-k$ edges $e_1, \ldots, e_{3-k}$ of $G$ that are not incident with any of the broken vertices. Let $e_i=\{u_i,w_i\}$ for $i=1,\ldots, 3-k$. We will prove that $O$ is edge-breaking and that $G^*$ is not simple.
	
	Let $Q_i$ and $Q_i'$ be the two shadow-connecting walks in the diamond around $e_i$. As none of the $2(3-k)$ walks $Q_i$, $Q_i'$ ($i\in\{1,\ldots,3-k\}$) exist in $O(G)\setminus X$, each of those walks contains at least one vertex of $X$. We just proved that such a vertex cannot be in the vertex-shadow of any of the broken vertices so they must be in $X'$. If $k=2$ there are no vertices in $X'$ so they cannot be in any of those paths, a contradiction. Assume that $k<2$. If a vertex of $B_{O(G)}$ is not a 0- or 2-point, then it is in at most two double chambers. If all vertices in $X'$ would be such vertices then the vertices of $X'$ would be in at most $2(2-k)$ of the $2(3-k)$ double chambers, so at least two of our shadow-connecting walks would not contain a vertex of $X'$, a contradiction. It follows that at least one vertex of $X'$ is a 0- or 2-point. 
	
	A vertex of $X'$ cannot be a 0-point for then it would be the only vertex in a vertex-shadow and therefore the corresponding vertex would be broken. Thus, at least one vertex of $X'$ is a 2-point. If $|S'|=1$ this is the only vertex of $X'$, if $|X'|=2$ there is one other vertex. If that vertex is not a 2-point it can be in at most two of our shadow-connecting walks.
	
	In any case, there is at least one of the $3-k$ edges $e_i$, say $e_1$, such that the only vertices of $X'$ that are in $Q_1$ and $Q_1'$ are 2-points. It follows that there is either an edge $e_i$ that has the same face of $G$ on both sides, or if $|S'|=2$ the two vertices of $X'$ may correspond to faces in $G$ that share at least three edges. In either case, $G^*$ is not simple.
	
	If $O$ would be edge-preserving, then by Lemma \ref{lem:edge-path} there would be a path from a vertex in $S_O(u_1)$ to a vertex in $S_O(w_1)$ that only contains vertices and edges from $Q_1$ and $Q_1'$ and that does not contain a 2-point. As these 2-points are the only vertices of $X'$ in $Q_1$ and $Q'_1$, the path also exists in $O(G)\setminus X$, a contradiction. It follows that $O$ is edge-breaking.
\end{proof}

\subsection*{Local cuts}

To prove that edge-preserving operations preserve 3-connectivity we also need to prove that there can be no `local cuts' in $O(G)$. More specifically, every component of $O(G)\setminus X$ for a set $X$ of two vertices must contain all vertices of at least one vertex-shadow.

To reduce the number of cases that we need to check in Lemma~\ref{lem:diff_v0sets} and Lemma~\ref{lem:no_2_in_v0set} we will use Lemma~\ref{lem:type2_cutpath} to choose convenient cut-paths. Lemma~\ref{lem:remove_induced_subgraph} and Lemma~\ref{lem:path_is_induced} will be used to prove that we can always choose such cut-paths.

\begin{lemma}\label{lem:remove_induced_subgraph}
	Let $G$ be a plane triangulation with a connected embedded induced subgraph $H$. For any face $f$ of $H$, the vertices of $G$ in the interior of $f$ are in the same component of $G\setminus H$. If $H$ has no faces, i.e.\ it is just one vertex without edges, then $G\setminus H$ is connected.
\end{lemma}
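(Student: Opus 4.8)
The plan is to first reduce the statement to a single face and then argue by contradiction, with the \emph{induced} hypothesis doing the essential work. First I would observe that the components of $G\setminus H$ are distributed among the faces of $H$: an edge of $G$ with neither endpoint in $H$ cannot cross an edge of $H$, so it lies in the interior of exactly one face of $H$, and hence every component of $G\setminus H$ is contained in the interior of a single face. Consequently it suffices to fix a face $f$ of $H$ and show that the vertices of $G$ in the interior of $f$ lie in one component of $G\setminus H$, i.e.\ that they are connected to one another using only edges with both endpoints in the interior of $f$. Throughout I would use that the closure $\bar f$ carries a triangulation of a disk whose boundary is the facial walk $W$ of $f$ (a closed walk in $H$), since $G$ is a triangulation and every face of $G$ inside $f$ is a triangle.

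Assume for contradiction that the interior vertices of $f$ form at least two components, and let $A$ be one of them. The key step is to produce a cycle $L$ in $G$, lying in $\bar f$, that separates $A$ from the remaining interior vertices and whose vertices all lie on $W$. I would obtain $L$ as the boundary of the closed star of $A$ in the disk triangulation of $\bar f$ (the union of all triangles incident with $A$). Since $A$ is a full component of the interior vertices, no interior vertex outside $A$ is adjacent to $A$; hence every vertex of $L$ is adjacent to $A$ but does not itself lie in $A$, so it cannot be an interior vertex and must lie on $W$, and in particular in $V_H$.

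Now comes the crux. Because $H$ is an induced subgraph, every edge of $G$ joining two vertices of $V_H$ is already an edge of $H$; therefore all edges of $L$ lie in $E_H$, so $L$ is a cycle of $H$. But $L$ lies inside $\bar f$, and $f$ is a face of $H$, so it contains no edge of $H$ in its interior; hence the edges of $L$ must lie on the boundary $W$. The region $D$ enclosed by $L$ on the side of $A$ is then an open subset of $f$ whose boundary $L$ meets $f$ nowhere, so $D$ is simultaneously open and closed in the connected disk $f$; as $D\neq\emptyset$ this forces $D=f$. Thus the closed star of $A$ exhausts $\bar f$, every interior vertex already lies in $A$, and so $A$ is \emph{all} the interior vertices, contradicting the assumption of at least two components.

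The step I expect to be the main obstacle is the structural claim that the boundary of the closed star of $A$ is a single cycle $L$ with the stated property, together with the bookkeeping needed when the facial walk $W$ is not a simple cycle (so that a proper subwalk of $W$ could a priori carry $L$); these are the places where the planarity of $G$ and the disk structure of $\bar f$ are used most delicately, whereas once $L\subseteq H$ is in hand the contradiction is immediate. Finally, for the degenerate case in which $H$ is a single vertex $v$ with no edges (and hence no faces), I would argue directly: since $G$ is a triangulation the neighbours of $v$ form a cycle, so any walk of $G$ passing through $v$ can be rerouted along that cycle to avoid $v$, whence $G\setminus H=G-v$ is connected. Equivalently, one may invoke that a plane triangulation on at least four vertices is $3$-connected, so that deleting a single vertex leaves a connected graph.
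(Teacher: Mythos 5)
Your proof is correct in substance but takes a genuinely different route from the paper's. The paper argues directly and constructively: for each vertex $x_i$ of the facial walk it lists the interior neighbours $y_{i,1},\dots,y_{i,k_i}$ in rotational order, uses the triangulation property to see that consecutive ones are adjacent, and uses inducedness to guarantee that the third vertex of the triangle on a boundary edge $\{x_i,x_{i+1}\}$ is interior, so that $y_{i,k_i}=y_{i+1,1}$; this glues everything into one closed walk of interior vertices running just inside the boundary, to which every remaining interior vertex attaches by bridge-connectivity. You instead argue by contradiction via the closed star of a component $A$ and a clopen (Jordan-type) argument. Both proofs hinge on exactly the same use of the \emph{induced} hypothesis -- no chords of the facial walk inside $f$ -- and your key deduction (every frontier edge of the closed star joins two $H$-vertices, hence is an edge of $H$, hence lies on $W$, so the star is clopen in the open face) is sound. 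Two caveats. First, the intermediate claim that the frontier of the closed star is a \emph{single} cycle $L$ is both unnecessary and not obviously true (the star could a priori have several boundary components); your argument survives if you drop $L$ entirely and apply the clopen reasoning to the closed star itself, since its frontier meets the open face nowhere. Second, the fallback ``a plane triangulation on at least four vertices is $3$-connected'' is valid only for simple graphs, whereas this lemma is applied in the paper to triangulations that may have multiple edges (lopsp-operations and barycentric subdivisions); keep the rerouting argument along the closed walk of neighbours, and note, as the paper does, that inducedness is what rules out a loop at the single vertex of $H$. The paper also handles a non-simple facial walk explicitly by indexing boundary vertices with repetition, the bookkeeping you flagged but deferred.
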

\begin{proof}
	Let $x_0,x_1,\ldots, x_{n-1}$ be the order of the vertices in the face $f$, indices modulo $n$. It is possible that $x_i=x_j$ for $i\neq j$. For every $i\in\{0,\ldots,n-1\}$ let $y_{i,1}, y_{i,2},\ldots, y_{i,k_i}$ be the neighbours of $x_i$ in the interior of $f$ in the $i$-th angle, in the same order as they appear in the rotational order of $x_i$. As $G$ is a triangulation and $H$ is an induced subgraph, the vertices $y_{i,1}, y_{i,2},\ldots, y_{i,k_i}$ are different from the $x_i$ and they form a walk in the interior of $f$. 
	If an $x_i$ has no neighbour in the interior of $f$, then there is an edge between $x_{i-1}$ and $x_{i+1}$ as $G$ is a triangulation. Because $H$ is an induced subgraph that is not possible unless $f$ is an empty triangle, in which case the lemma holds. We can therefore assume that every $x_i$ has at least one neighbour in the interior of $f$.  
	
	Note that as $G$ is a triangulation, $y_{i,k_i}=y_{i+1,1}$ for every $i$. This allows to construct a closed walk $y_{0,1},\ldots, y_{0,k_0-1}, y_{1,1},\ldots, y_{1,k_1-1},\ldots, y_{n-1,1}, \ldots, y_{n-1,k_{n-1}-1}$ that contains all the vertices in the interior of $f$ that have a neighbour in $f$, and no vertices of $f$.
	By the definition of a bridge, there is a path from any other vertex in the interior of $f$ to a vertex $y_{i,j}$. This proves the first part of the lemma.
	
	If $H$ is only one vertex with no edges then there are no loops at that vertex as $H$ is induced. As $G$ is a triangulation the neighbours of the vertex of $H$ now form a closed walk in $G\setminus H$. This proves that the vertex of $H$ is not a cut-vertex of $G$, which proves the lemma. 
\end{proof}

\begin{lemma}\label{lem:path_is_induced}
	A type-2 path with endpoints of type 0 in a barycentric subdivision, lopsp-operation or double chamber patch $G$ is an induced subgraph of $G$. 
\end{lemma}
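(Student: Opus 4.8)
The plan is to show that no \emph{chord} can exist, that is, that no edge of $G$ joins two vertices of the path $P$ that are not consecutive along $P$. First I would record the shape of such a path: since a type-2 edge is by definition incident to no vertex of type~2, it joins a vertex of type~0 to a vertex of type~1. Hence the vertices of $P$ alternate between type~0 and type~1, and because both endpoints are of type~0, every type-1 vertex of $P$ lies in its interior and so has exactly two neighbours along $P$, both of type~0.

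Next I would classify the possible chords. By the ``no edges between vertices of the same type'' condition (Definition~\ref{def:lopsp}, and the analogous property of $B_G$ in Definition~\ref{def:bary}), an edge joining two vertices of $P$ can join neither two type-0 vertices nor two type-1 vertices. Therefore any chord must be a type-2 edge joining a type-0 vertex $u$ of $P$ to a type-1 vertex $w$ of $P$. So it suffices to prove that every type-0 neighbour of a type-1 vertex $w$ of $P$ is already a neighbour of $w$ along $P$.

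The key structural fact I would use is that every type-1 vertex has at most two type-0 neighbours. Around any vertex the incident edges alternate between its two types, so the type-2 edges at $w$ (the only edges reaching type-0 vertices) number at most half of $deg(w)$. For an interior vertex $w$ of $P$ we have $deg(w)=4$ in a lopsp-operation, or $w$ is an edge-vertex of $B_G$ which likewise meets exactly two type-0 vertices; in both cases $w$ has exactly two type-2 edges. The special vertex $v_1$ of degree~2 has only one type-2 edge and hence cannot be interior to a type-2 path, so it never occurs here, and duplicated boundary vertices of a double chamber patch only have fewer such edges. In every case $w$ has at most two type-2 edges, and both of them are the edges of $P$ incident with $w$. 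Consequently $w$ has no type-0 neighbour other than its two neighbours along $P$, so the putative chord cannot exist and $P$ is induced.

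I expect the main obstacle to be bookkeeping rather than a genuine difficulty: making the single argument cover all three ambient structures simultaneously and handling the degenerate vertices --- the degree-2 copy of $v_1$, possible loops contributing two parallel type-2 edges at an edge-vertex of $B_G$, and the vertex duplication in $O_P$. In each of these cases the relevant claim (a type-1 vertex has at most two type-2 edges, both of which are used up by $P$) still holds, so the uniform argument above applies; I would simply verify it case by case.
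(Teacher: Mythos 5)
Your proof is correct and follows essentially the same route as the paper's: a chord must be a type-2 edge joining a type-0 vertex to an interior type-1 vertex of the path, and since a type-1 vertex has at most two incident type-2 edges --- both already occupied by the path --- no such chord can exist. The only difference is that you spell out the case analysis (degree-4 vertices, the degree-2 vertex $v_1$, edge-vertices of $B_G$, duplicated boundary vertices of $O_P$) that the paper compresses into ``because of the properties of barycentric subdivisions, lopsp-operations and double chamber patches.''
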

\begin{proof}
	Assume that such a path $P$ is not an induced subgraph. Then there is an edge $e\notin P$ whose vertices $x$ and $y$ are in $P$. As there are no edges between vertices of the same type, the vertices $x$ and $y$ are of types 0 and 1. Say w.l.o.g.\ that $t(x)=1$. Then $x$ has at most two incident edges of type 2 in $G$ because of the properties of barycentric subdivisions, lopsp-operations and double chamber patches. As $x$ is not one of the endpoints of $P$ all of its incident edges of type 2, including $e$, are in $P$, a contradiction.
\end{proof}

The idea in Lemma~\ref{lem:type2_cutpath} is that we would like to choose a cut-path that is of type 2. Generally this is not possible, but we can choose a cut-path such that either $P_{v_0,v_2}\setminus\{v_0,v_2\}$ is of type 2 or $P_{v_0,v_1}\setminus\{v_0\}$ is of type 2. Figure~\ref{fig:cutpaths} shows two double chamber patches that are obtained by cutting along cut-paths that satisfy the conditions in Lemma~\ref{lem:type2_cutpath}. A more concrete example is the cut-path of gyro shown in Figure~\ref{fig:lopsp_gyro}.

\begin{lemma}\label{lem:type2_cutpath}
	Let $O$ be a $c3$-lopsp-operation. Then there exists a cut-path $P$ such that at least one of the following is true:
	\begin{enumerate}[(a)]
		\item The path $P_{v_0,v_2}$ contains exactly one vertex $q_0$ in $N_0(v_0)$ and one vertex $q_2$ in $N_0(v_2)$. The subpath of $P_{v_0,v_2}$ between $q_0$ and $q_2$ is a type-2 path that contains all vertices of $P_{v_0,v_2}$ different from $v_0$ and $v_2$.
		
		\item The path $P_{v_0,v_1}$ contains exactly one vertex $q_0$ in $N_0(v_0)$. The subpath of $P_{v_0,v_1}$ between $q_0$ and $v_1$ is a type-2 path that contains all vertices of $P_{v_0,v_1}$ different from $v_0$. Let $q_1$ be the last vertex of type 0 in $P_{v_0,v_1}$. It is different from $v_0$ and it is the only vertex of $P_{v_0,v_1}$ that may be in $N_0(v_2)$.
		There is a type-2 path $R$ in $O$ between $q_1$ and a vertex $q_2$ in $N_0(v_2)$ that contains no other vertices of $P$ than $q_1$ and perhaps $v_2$. The only vertex of $N_0(v_2)$ in $R$ is $q_2$ and the only vertex of $R$ that may be in $N_0(v_0)$ is $q_1$.
		\end{enumerate}
\end{lemma}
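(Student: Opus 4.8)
The plan is to construct the cut-path by hand, forcing one of its two arms to be (almost) a type-2 path, with a dichotomy at $v_1$ deciding which arm. I would begin with two preliminary facts. First, the spanning subgraph $O'$ of $O$ on the vertices of types $0$ and $1$, keeping only the type-2 edges, is connected: since every face of $O$ is a triangle with one vertex of each type, the link of any type-2 vertex is a closed walk whose edges are all type-2, so deleting a type-2 vertex never separates its neighbours, and deleting all of them one by one preserves connectivity. Hence any two type-0 vertices are joined by a type-2 path, and by taking a shortest one we may assume it meets $N_0(v_0)$ and $N_0(v_2)$ each in a single vertex. Second, if $T$ is any type-2 path with type-0 endpoints, then $T$ is induced by Lemma~\ref{lem:path_is_induced}; being a tree it bounds a single face, so Lemma~\ref{lem:remove_induced_subgraph}, applied to the triangulation $O$, shows $O\setminus T$ is connected. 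This second fact is the tool I will use to attach the remaining arm of the cut-path without reusing vertices of $T$. Throughout, the degenerate operations with an edge between $v_0$ and $v_2$ (Dual and the identity, by Lemma~\ref{lem:dual_char}) are checked by hand.

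The dichotomy I would run is whether some vertex of $N_0(v_0)$ can be joined to some vertex of $N_0(v_2)$ by a type-2 path avoiding $v_1$. If $t(v_1)=2$ then $v_1\notin O'$, and if $t(v_1)=1$ then $\deg v_1=2$ makes $v_1$ a leaf of $O'$; in either case such a path exists, so the only genuinely obstructed situation is $t(v_1)=0$ with $v_1$ a cut vertex of $O'$ separating $N_0(v_0)$ from $N_0(v_2)$. When a $v_1$-avoiding path exists I aim for alternative $(a)$: take a minimal such type-2 path $T$ from $q_0\in N_0(v_0)$ to $q_2\in N_0(v_2)$, so that $q_0$ and $q_2$ are its only vertices in the respective $N_0$-sets, and form $P_{v_0,v_2}$ by prefixing $v_0$ and suffixing $v_2$ exactly when $t(v_0)=2$, respectively $t(v_2)=2$. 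It remains to complete the cut-path with a $v_0$--$v_1$ path meeting $P_{v_0,v_2}$ only in $v_0$; since such a path must avoid all of $P_{v_0,v_2}\setminus\{v_0\}$ it automatically avoids $v_2$, and its existence follows from the connectivity of $O\setminus T$ (adding back $v_0$ when $t(v_0)=0$, and, when $t(v_2)=2$, applying Lemma~\ref{lem:remove_induced_subgraph} to the induced subgraph $T\cup\{v_2\}$ to see that $v_0$ and $v_1$ lie in a common face).

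In the obstructed case I aim for alternative $(b)$, making the $v_0$--$v_1$ arm the type-2 path: with $t(v_1)=0$ I take a minimal type-2 path from $q_0\in N_0(v_0)$ to $v_1$ on the $v_0$-side of the cut vertex $v_1$ (so $q_1=v_1$), take the auxiliary path $R$ to be a minimal type-2 path from $v_1$ into $N_0(v_2)$ on the $v_2$-side, and route the remaining, not-necessarily-type-2 arm $P_{v_0,v_2}$ through the complement of the type-2 arm using the second preliminary fact. The step I expect to be the main obstacle is arranging $R$ and $P_{v_0,v_2}$ \emph{simultaneously} so that $R$ meets the cut-path only in $q_1$ and possibly $v_2$: because $v_1$ separates $O'$, the $v_2$-side is a single planar ``pocket'' at $v_1$, and I would use the single-face structure of the induced type-2 arm (again via Lemma~\ref{lem:remove_induced_subgraph}) to keep $R$ inside that pocket while letting $P_{v_0,v_2}$ leave $v_0$ on the opposite side, so that the two stay internally disjoint. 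Verifying these planar separation claims and the accompanying bookkeeping---that each minimal path meets the forbidden $N_0$-sets only where the statement permits---is where essentially all the work lies; the mere existence of every individual path is immediate from the two preliminary facts.
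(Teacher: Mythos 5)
Your toolkit is the same as the paper's (the link-walk argument giving type-2 connectivity, and Lemma~\ref{lem:path_is_induced} plus Lemma~\ref{lem:remove_induced_subgraph} to route the remaining arm of the cut-path), but your case A contains a genuine gap that cannot be patched within your framework. The problem is the step ``when $t(v_2)=2$, applying Lemma~\ref{lem:remove_induced_subgraph} to the induced subgraph $T\cup\{v_2\}$ to see that $v_0$ and $v_1$ lie in a common face.'' Lemma~\ref{lem:remove_induced_subgraph} does not show any such thing, and the claim is false: $O$ is a plane \emph{multigraph}, so there may be two parallel edges between $v_2$ and $q_2$, and then $T\cup\{v_2\}$ has a face of size two which can contain $v_1$ in its interior while $v_0$ lies outside. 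Truncation is exactly such an operation: its operation graph has five vertices, with $N_0(v_0)=N_0(v_2)=\{a\}$ for a single type-0 vertex $a$, two parallel edges between $a$ and $v_2$, and $v_1$ (of type 1, degree 2, adjacent to $a$ and $v_2$) inside the 2-gon they bound. Your case A applies to Truncation (the one-vertex path $T=\{a\}$ avoids $v_1$), yet no $v_0$--$v_1$ path avoiding $P_{v_0,v_2}\setminus\{v_0\}$ exists, since both neighbours of $v_1$ lie on $P_{v_0,v_2}$. Worse, for Truncation alternative $(a)$ is impossible for \emph{every} cut-path: any cut-path must begin $v_1,a,\ldots$, so $a\in P_{v_0,v_1}$, and then $P_{v_0,v_2}$ contains no vertex of $N_0(v_0)$ at all. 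So in this configuration one must abandon $(a)$ and produce a $(b)$-type cut-path instead; this is precisely the subcase the paper handles with a Jordan-curve argument plus Lemma~\ref{lem:c3char}, which forces the enclosed $v_1$ to be of type 1 and degree 2 with neighbours $v_2$ and $q_2$, after which a cut-path satisfying $(b)$ (with $R$ the single vertex $q_2$) is built. Note that this is the only place the $c3$ hypothesis is genuinely used; your proposal never invokes it, which is a structural sign that the degenerate case has been missed.

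Your obstructed case is essentially sound but needlessly fragile: rather than keeping $R$ in a ``pocket'' and arguing planar separation for $R$ and $P_{v_0,v_2}$ separately, concatenate the $v_0$-side path and $R$ into a single type-2 path through $v_1$ (they meet only in $v_1$, since their interiors lie in different components of $O'\setminus v_1$), note it is induced by Lemma~\ref{lem:path_is_induced}, and apply Lemma~\ref{lem:remove_induced_subgraph} once to its restriction away from $v_0$ and $v_2$; this gives $P_{v_0,v_2}$ disjoint from both arms simultaneously, which is exactly how the paper's second case runs. That repair is routine; the case-A gap above is the substantive error.
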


\begin{figure}
	\begin{center}
		\scalebox{1}{\begin{tikzpicture}[scale= 1.2]
\tikzset{bluevertex/.style={shape=circle, draw=blue, scale=0.3, fill=blue}}
\tikzset{grayvertex/.style={shape=circle, draw=gray, scale=0.3, fill=gray}}
\tikzset{redvertex/.style={shape=circle, draw=red, scale=0.3, fill=red}}
\tikzset{blackvertex/.style={shape=circle, draw=black, scale=0.3, fill=black}}
\tikzset{blueedge/.style={draw=blue, thick}}
\tikzset{greenedge/.style={draw=black!40!green, thick}}
\tikzset{grayedge/.style={shape=circle, draw=gray}}

\tikzset{noNode/.style={draw=none}}

\node[grayvertex,label=90:$\color{gray}{v_2}$] (v2) at (0,2) {};
\node[blackvertex,label=-90:${q_{0,R}}$] (w) at ({2*tan(30)}, 0) {};
\node[blackvertex,label=-90:${q_{0,L}}$] (u) at ({-2*tan(30)}, 0) {};
\node[grayvertex] (v1) at (0, 0) {};
\node[blackvertex,label=0:${q_{2,R}}$] (q2) at ({tan(30)*0.3}, 1.7) {};
\node[blackvertex,label=180:${q_{2,L}}$] (q2') at ({-tan(30)*0.3}, 1.7) {};


\begin{scope}[grayedge]

\draw (q2) -- (v2) -- (q2');
\draw (u) --(v1);
\draw (v1)--(w);
\end{scope}

\begin{scope}[greenedge]

\end{scope}

\begin{scope}[draw=black, very thick]
\draw  (q2) --(w) ;
\draw (q2') -- (u);

\end{scope}

\end{tikzpicture} \hspace{1cm}}
		\scalebox{1}{\begin{tikzpicture}[scale= 1.2]
\tikzset{bluevertex/.style={shape=circle, draw=blue, scale=0.3, fill=blue}}
\tikzset{grayvertex/.style={shape=circle, draw=gray, scale=0.3, fill=gray}}
\tikzset{redvertex/.style={shape=circle, draw=red, scale=0.3, fill=red}}
\tikzset{blackvertex/.style={shape=circle, draw=black, scale=0.3, fill=black}}
\tikzset{blueedge/.style={draw=blue, thick}}
\tikzset{greenedge/.style={draw=black!40!green, thick}}
\tikzset{grayedge/.style={shape=circle, draw=gray}}

\tikzset{noNode/.style={draw=none}}

\node[blackvertex,label=90:$q_{2}$] (v2) at (0,2) {};
\node[blackvertex,label=-90:$q_{0,R}$] (w) at ({2*tan(30)}, 0) {};
\node[blackvertex,label=-90:$q_{0,L}$] (u) at ({-2*tan(30)}, 0) {};
\node[blackvertex,label=-90:$v_1$] (v1) at (0, 0) {};
\node[bluevertex] (x1) at (0.2, 0.5) {};
\node[bluevertex] (x2) at (-0.35, 0.8) {};
\node[bluevertex] (x3) at (0.1, 1.3) {};

\node[blackvertex,label={[label distance=0.08cm]90:{$\color{black}{q_{1,L}}$}}] (y) at (-0.3, 0) {};
\node[blackvertex,label={[label distance=-0.05cm]90:{$\color{black}{q_{1,R}}$}}] (y') at (0.3, 0) {};


\begin{scope}[grayedge]

\draw  (v2) --(w) ;
\draw (v2) -- (u);
\end{scope}

\begin{scope}[greenedge]

\end{scope}

\begin{scope}[draw=black, very thick]
\draw (u) --(y)--(v1);
\draw (v1)--(y') --(w);
\draw[blue] (y)--(x1)--(x2)--(x3)--(v2);
\end{scope}

\node[blue] (_) at (0.1,0.9) {$R$};
\end{tikzpicture} }
	\end{center}
	\caption{This figure shows a schematic representation of double chamber patches for cut-paths of the two different forms in Lemma~\ref{lem:type2_cutpath}. Only the type-2 paths that are described in that lemma are drawn in black or blue.}
	\label{fig:cutpaths}
\end{figure}
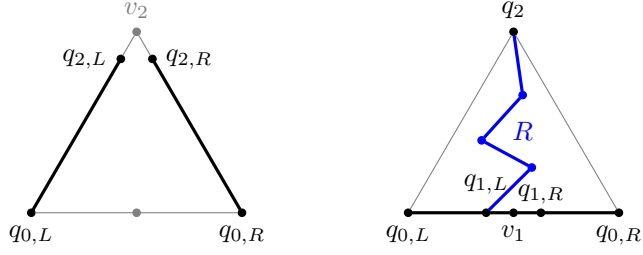
\begin{proof}	
	Consider any path in $O$ between a vertex in $N_0(v_0)$ and a vertex in $N_0(v_2)$. By replacing the type-2 vertices in this path by walks along their neighbours we get a type-2 walk between those same vertices which implies that there is also a type-2 path $Q$ between a vertex $q_0$ in $N_0(v_0)$ and a vertex $q_2$ in $N_0(v_2)$ that contains no other vertices of $N_0(v_0)$ or $N_0(v_2)$. Both $q_0$ and $q_2$ are of type 0 and they are identical to or adjacent to $v_0$ and $v_2$ respectively. By adding at most one edge at each end we get a path $\widetilde{Q}$ in $O$ from $v_0$ to $v_2$. There are two cases:
	
	\begin{itemize}
		\item[\fbox{$v_1\notin Q$:}] Let $Q'$ be the graph induced by $v_2$ and $Q\setminus v_0$. The path $Q$ is an induced graph by Lemma~\ref{lem:path_is_induced}. If $v_2$ is in $Q$ then $Q'$ is a path. However, if $v_2$ is of type 2 there may be more than one edge between $v_2$ and $q_2$. All of those edges are in $Q'$. If $v_1$ is in the interior of the same face of $Q'$ as $v_0$ --which is always the case if $v_2$ is in $Q$-- then by Lemma~\ref{lem:remove_induced_subgraph} there is a path in $O\setminus Q'$ between $v_1$ and $v_0$. Adding this path to $\widetilde{Q}$ we get a cut-path of $O$ which satisfies (a).
		
		Now assume that $v_1$ is not in the interior of the same face of $Q'$ as $v_0$. Then $v_1$ is in the interior of a face of $Q'$ of size two with vertices $v_2$ and $q_2$. The boundary $c$ of this face is a cycle and therefore a Jordan curve in the plane graph $O$. For any cut-path, the subpath between $v_1$ and $v_0$ would contain $q_2$ as $v_2$ cannot be in that path. This implies that $c$ induces a 4-cycle of type 1 in every diamond of $O$, as shown in Figure~\ref{fig:type2_cutpath_proof}. This 4-cycle has $v_1$ in its interior. By Lemma~\ref{lem:c3char} the cycle must be trivial, which implies that $v_1$ is of type 1 and has degree 2, its neighbours are $v_2$ and $q_2$. Let $P_1$ be the subpath of $\widetilde{Q}$ from $v_0$ to $q_2$ together with the unique edge between $q_2$ and $v_1$. This is a path from $v_0$ to $v_1$. We can now apply Lemma~\ref{lem:remove_induced_subgraph} to the path $P_1\setminus v_0$ to get a path $P_2$ from $v_0$ to $v_2$ that intersects $P_1$ only in the vertex $v_0$. The path $P:=P_1\cup P_2$ is a cut-path of $O$ that satisfies the properties of (b). The path $R$ is just the vertex $q_2$. If $q_2$ would be $v_0$ then $Q'$ would be just one vertex and we would have found a cut-path with the first set of properties, so $q_2\neq v_0$.
		
		\begin{figure}		
			\begin{center}
				\begin{tikzpicture}[baseline=-35pt, scale=1.3]
\tikzset{bluevertex/.style={shape=circle, draw=blue, scale=0.3, fill=blue}}
\tikzset{grayvertex/.style={shape=circle, draw=gray, scale=0.3, fill=gray}}
\tikzset{redvertex/.style={shape=circle, draw=red, scale=0.3, fill=red}}
\tikzset{blackvertex/.style={shape=circle, draw=gray, scale=0.3, fill=black}}
\tikzset{blueedge/.style={shape=circle, draw=blue, thick}}
\tikzset{grayedge/.style={shape=circle, draw=gray, thick}}
\tikzset{greenedge/.style={shape=circle, draw=green!70!black, thick}}

\tikzset{noNode/.style={draw=none}}



\node[grayvertex,label={[label distance=-0.05cm]20:{$\color{gray}{v_1}$}}] (v1) at (0.2,-0.1) {};
\node[blackvertex,label={[label distance=-0.05cm]-20:{$v_2$}}] (v2) at (1,0) {};
\node[grayvertex,label={[label distance=-0.05cm]120:{$\color{gray}{v_0}$}}] (v0) at (0.9, 1.5) {};
\node[redvertex,label={[label distance=-0.05cm]-120:{$\color{red}{q^2}$}}] (q2) at (-1.2, 0) {};

\begin{scope}[grayedge]
\draw [dashed] plot [smooth, tension = 1] coordinates {(0.9,1.5) (0,0.85) (-1.3,1) (-1.2,0) (-0.2,0.2) (0.2,-0.1)};

\draw [dashed] plot [smooth, tension = 1] coordinates {(1,0) (1, 0.4) (1.15,1.1) (0.9,1.5)};
\end{scope}

\begin{scope}[greenedge]
\draw (v2) to[in=50, out = 130] (q2);
\draw (v2) to[in=-50, out = -130] (q2);
\end{scope}

\end{tikzpicture} \hspace{2cm}						\begin{tikzpicture}
\tikzset{bluevertex/.style={shape=circle, draw=blue, scale=0.3, fill=blue}}
\tikzset{grayvertex/.style={shape=circle, draw=gray, scale=0.3, fill=gray}}
\tikzset{redvertex/.style={shape=circle, draw=red, scale=0.3, fill=red}}
\tikzset{blackvertex/.style={shape=circle, draw=gray, scale=0.3, fill=black}}
\tikzset{blueedge/.style={shape=circle, draw=blue, thick}}
\tikzset{grayedge/.style={shape=circle, draw=gray, thick}}
\tikzset{greenedge/.style={shape=circle, draw=green!70!black, thick}}

\tikzset{noNode/.style={draw=none}}



\node[grayvertex] (v1) at (0,0) {};
\node[blackvertex] (v2') at (0,-2) {};
\node[blackvertex] (v2) at (0,2) {};
\node[grayvertex] (v0) at ({2*tan(30)}, 0) {};
\node[grayvertex] (v0') at ({-2*tan(30)}, 0) {};
\node[redvertex] (q2) at (0.6, 0) {};
\node[redvertex] (q2') at (-0.6, 0) {};


\begin{scope}[grayedge, dashed]
\draw   (v0') -- (v2')--(v0);
\draw (v0) -- (v1) -- (v0') ;
\draw  (v2) --(v0);
\draw (v0') -- (v2);
\end{scope}

\begin{scope}[greenedge]
\draw (q2) -- (v2) -- (q2') -- (v2') -- (q2);
\end{scope}

\end{tikzpicture}
			\end{center}
			\caption{On the left a schematic representation of a lopsp-operation in the proof of Lemma~\ref{lem:type2_cutpath}} is shown. The dashed lines show the cut-path $P$. Only relevant vertices and edges are drawn. On the right the $P$-diamond of the operation is shown.
			\label{fig:type2_cutpath_proof}
		\end{figure}
		
		\item[\fbox{$v_1\in Q$:}] In this case $v_1$ is of type 0, if it would be of type 1 it would have degree four in $O$ which is impossible by Definition~\ref{def:lopsp}. Let $Q'$ be the graph $Q\setminus \{v_0,v_2\}$. As before, this is an induced graph by Lemma~\ref{lem:path_is_induced}. Applying Lemma~\ref{lem:remove_induced_subgraph} gives a path $P_2$ between $v_0$ and $v_2$ that does not contain any vertices of $Q'$. Now $P_2$ and the subpath of $\widetilde{Q}$ between $v_0$ and $v_1$ form a cut-path together that satisfies the properties of (b). The path $R$ is the subpath of ${Q}$ between $v_1$ and $q_2$. 
	\end{itemize}
\end{proof}

Lemma~\ref{lem:polyhedral_cut} will be used several times in the proofs of Lemma~\ref{lem:diff_v0sets} and Lemma~\ref{lem:no_2_in_v0set}. 
The idea of those proofs is that we assume that $O(G)$ has a 2-cut, and prove that certain components are `small' enough so that they would also appear when applying $O$ to a cube (or other polyhedral embeddings). With Lemma~\ref{lem:polyhedral_cut} that leads to a contradiction. In this article Lemma~\ref{lem:polyhedral_cut} is only used with the cube as $G'$, but it is formulated for general polyhedral embeddings.

\begin{lemma}\label{lem:polyhedral_cut}
	Let $G$ be an embedded graph, and let $O$ be a $c3$-lopsp-operation. Let $H$ be a subgraph of $B_{O(G)}$. There is no face $f$ in $H$ and two 0-vertices $x$ and $y$ in $O(G)$ with all of the following properties:
	\begin{enumerate}[(1)]
		\item $f$ is simple and there is a polyhedral embedding $G'$ such that there is a simple face $f'$ in a subgraph $H'$ of $B_{O(G')}$ with $IC(f)\cong IC(f')$.
		\item For every vertex of type 1 in the boundary of $f$ its two neighbours of type 0 are also in the boundary of $f$.
		\item There is a 0-vertex $a$ in the interior of $f$ such that every type-2 path from $a$ to a 0-vertex in the boundary of $f$ contains $x$ or $y$.
		\item $x$ and $y$ each appear at most once in the facial walk of $f$.
	\end{enumerate} 
\end{lemma}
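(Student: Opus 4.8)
The plan is to assume such a configuration $(f,x,y)$ exists in $B_{O(G)}$ and to derive a contradiction by transporting it into the polyhedral setting, where $O(G')$ is forced to be 3-connected. First I would invoke property (1): fix the polyhedral embedding $G'$, the simple face $f'$ in a subgraph $H'\subseteq B_{O(G')}$, and a type-preserving isomorphism $\varphi\colon IC(f)\to IC(f')$. Because $x$ and $y$ each occur at most once in the facial walk of $f$ (property (4)), they are not duplicated when the boundary of $f$ is cut open to build $IC(f)$; hence $\varphi$ carries them to single 0-vertices $x'=\varphi(x)$ and $y'=\varphi(y)$ of $B_{O(G')}$, and it carries the interior 0-vertex $a$ to an interior 0-vertex $a'=\varphi(a)$. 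Since $\varphi$ respects types, the interior/boundary split, and type-2 paths, properties (2) and (3) transfer verbatim to $f'$: every type-1 vertex on the boundary of $f'$ has both of its type-0 neighbours on that boundary, and every type-2 path inside $IC(f')$ from $a'$ to a boundary 0-vertex meets $\{x',y'\}$. As $O$ is $c3$ and $G'$ is polyhedral, $O(G')$ is a polyhedral embedding and therefore 3-connected (Definition~\ref{def:c3}).

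Next I would upgrade the transferred property (3) to a genuine separation statement in $O(G')$. Recall that the 0-vertices of $B_{O(G')}$ are precisely the vertices of $O(G')$ and that type-2 paths with type-0 endpoints correspond to paths in $O(G')$. The key observation is that a type-2 path leaving $a'$ can only exit the interior of $f'$ through a 0-vertex. Indeed, let $v$ be the first vertex of such a path that does not lie in the interior of $f'$; since consecutive vertices are adjacent and the facial walk separates the interior from the exterior, $v$ lies on the boundary of $f'$, and its predecessor $w$ is an interior vertex joined to $v$ by a type-2 edge. Were $v$ of type 1, then $w$ would be one of its two type-0 neighbours and, by the transferred property (2), would have to lie on the boundary, contradicting that $w$ is interior. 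Hence $v$ is a 0-vertex, the initial segment from $a'$ to $v$ lies in $IC(f')$, and the transferred property (3) forces this segment to meet $\{x',y'\}$. Together with property (3) applied to boundary targets, this shows that in $O(G')$ every path from $a'$ to a vertex that is not interior to $f'$ passes through $x'$ or $y'$.

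Finally I would exhibit an actual cut. It suffices to find a single vertex $b'$ of $O(G')$ that is not interior to $f'$ and differs from $x'$ and $y'$: then the at-most-two-element set $\{x',y'\}$ separates $a'$ from $b'$, contradicting the 3-connectivity of $O(G')$. Since $f'$ is one simple face of the subgraph $H'$ whereas $O(G')$ is a 3-connected polyhedral embedding that extends beyond $f'$, such a 0-vertex exists on the boundary of $f'$ or in its exterior; routing an $a'$--$b'$ path that avoids $\{x',y'\}$ (possible by 3-connectivity) and applying the previous paragraph to its first crossing of the boundary delivers the contradiction. I expect the delicate points to be twofold. First, making the transfer along $\varphi$ fully rigorous, in particular checking that cutting $f$ open does not split $x$ or $y$ and that the type-2 paths inside $IC(f')$ are exactly the ones that property (3) governs. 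Second, justifying the existence of the separated vertex $b'$, that is, confirming that $f'$ genuinely has vertices of $O(G')$ outside its interior so that the separation is non-trivial.
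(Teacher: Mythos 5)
Your proposal follows essentially the same route as the paper's own proof: transport the configuration through the isomorphism $IC(f)\cong IC(f')$ to the polyhedral side, use property (2) to argue that a type-2 path can leave the interior of $f'$ only through a 0-vertex of its boundary, conclude that every path in $O(G')$ from $a'$ to a vertex not interior to $f'$ meets $\{x',y'\}$, and contradict the 3-connectivity of $O(G')$, which holds because $O$ is $c3$ and $G'$ is polyhedral. Two points of comparison. First, you tacitly assume that $x$ and $y$ lie in $f$ (boundary or interior), so that $\varphi(x)$ and $\varphi(y)$ are defined; the paper disposes of the remaining case in one sentence: if, say, $x$ is in neither the boundary nor the interior of $f$, then no copy of $x$ occurs in $IC(f)$, so the transferred property (3) forces every relevant path to contain $y'$ alone, and the argument produces an even smaller cut. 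You should add this case, though it costs nothing. Second, the concern you flag about the existence of $b'$ is genuine and is not resolved by the paper either: hypotheses (1)--(4) alone do not rule out that the only 0-vertices of $B_{O(G')}$ outside the interior of $f'$ are $x'$ and $y'$ themselves (property (3) is then satisfied trivially, since every path to a boundary 0-vertex ends at $x'$ or $y'$, and $\{x',y'\}$ separates nothing). The paper's proof simply asserts that ``$\{x',y'\}$ is a cut of $O(G')$'' without checking this; the assertion is harmless only because in every application of the lemma the boundary of $f$ contains 0-vertices distinct from $x$ and $y$ that are known not to lie in the component of $a$. So on this last point your write-up is, if anything, more candid than the paper's, and your overall argument is as sound as the original.
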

\begin{proof}
	Assume that such a face $f$ exists. As $IC(f)\cong IC(f')$ and $a$ is in the interior of $f$, there is a unique vertex $a'$ in $B_{O(G')}$ that corresponds to $a$. Because of $(4)$, $x$ and $y$ each correspond to at most one vertex of $IC(f)$. Therefore, if $x$ and $y$ are in the boundary or interior of $f$ they correspond to unique vertices $x'$ and $y'$ of $B_{O(G')}$ in $f'$. If w.l.o.g.\ $x$ is not then as $f$ is simple there is no type-2 path from $a$ to $x$ without a vertex in the boundary of $f$. 
	
	Condition $(2)$ implies that if a non-trivial type-2 path contains a 1-vertex in the boundary of $f$ then it also contains a 0-vertex in the boundary of $f$. As $f'$ is simple, any type-2 path from $a'$ to a 0-vertex outside of $f'$ contains a 0-vertex in the boundary of $f'$. By $(3)$ and the fact that $IC(f)\cong IC(f')$, such a path also contains $x'$ or $y'$. It follows that $\{x',y'\}$ is a cut of $O(G')$ of size at most two and therefore $O(G')$ is not 3-connected. As $G'$ is a polyhedral embedding and $O$ is $c3$, it follows by Theorem 2 in \cite{brinkmann2021local} that $O(G')$ is also a polyhedral embedding and therefore it is 3-connected, a contradiction. 
\end{proof}

\begin{lemma}\label{lem:diff_v0sets}
	Let $G$ be a simple 3-connected embedded graph, let $O$ be a $c3$-lopsp-operation and let $\{x,y\}$ be a set of vertices in $O(G)$. If $O$ is edge-preserving or $G^*$ is simple, then every component $A$ of $O(G)\setminus \{x,y\}$ contains a vertex in a vertex-shadow.
\end{lemma}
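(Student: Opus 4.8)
The plan is to argue by contradiction: suppose some component $A$ of $O(G)\setminus\{x,y\}$ contains no vertex lying in any vertex-shadow, and use the reduction to polyhedral embeddings packaged in Lemma~\ref{lem:polyhedral_cut} to derive a contradiction. As a preliminary step I would fix a cut-path $P$ of one of the two convenient forms provided by Lemma~\ref{lem:type2_cutpath}, so that the relevant sides of the double chambers are type-2 paths; this is exactly what later makes the local configuration around $A$ recognisable inside the barycentric subdivision of a polyhedral embedding. I would also record whether $v_0$ is of type $0$ or of type $2$, since this controls whether vertex-shadows are single $0$-points or are the boundary cycles of faces (Lemma~\ref{lem:image_singleton_or_face}).

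Next I would pick a vertex $a\in A$ and analyse the double chambers of $D_G$ incident with $a$. Since $a$ lies in no vertex-shadow, $\pi(a)$ is a type-$0$ vertex of $O$ that is not $v_0$ (when $v_0$ is of type $0$), respectively not adjacent to $v_0$ (when $v_0$ is of type $2$); in both cases $a$ is "interior" to the region that $\{x,y\}$ cuts off. The aim of this step is to exhibit a subgraph $H$ of $B_{O(G)}$ together with a simple face $f$ of $H$ whose interior contains $a$, whose boundary is assembled from the $1$- and $2$-sides of the double chambers separating $A$ from the surrounding vertex-shadows together with the two cut vertices $x$ and $y$, and along which each of $x,y$ occurs at most once. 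By construction every type-$1$ vertex on the boundary keeps both of its type-$0$ neighbours on the boundary, since those neighbours lie on the same sides. This gives hypotheses $(2)$ and $(4)$ of Lemma~\ref{lem:polyhedral_cut} directly.

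The decisive step is hypothesis $(3)$: every type-$2$ path from $a$ to a $0$-vertex on the boundary of $f$ must pass through $x$ or $y$. Because a type-$2$ path with type-$0$ endpoints corresponds to a walk in $O(G)$ (the remark before Definition~\ref{def:edge-path}), and because the boundary of $f$ is drawn along sides carrying vertex-shadow vertices, such a path avoiding $\{x,y\}$ would yield a walk in $O(G)\setminus\{x,y\}$ joining $a$ to a vertex-shadow vertex, contradicting the choice of $A$; hence $(3)$ holds. For hypothesis $(1)$ I would take $G'$ to be the cube: the face $f$ is determined only by how finitely many double chambers of $D_G$ are glued along shared sides, and, using that $G$ is simple with all degrees at least three (so no multiple edges between $0$- and $2$-vertices occur locally) and that $P$ has the special form from Lemma~\ref{lem:type2_cutpath}, this bounded gluing pattern can be reproduced around a suitable vertex, edge or face of the cube, giving $IC(f)\cong IC(f')$. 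With all four hypotheses in place, Lemma~\ref{lem:polyhedral_cut} produces a $2$-cut in $O(G')$, contradicting the $3$-connectivity of the polyhedral embedding $O(G')$ (which holds because $O$ is $c3$).

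The main obstacle I anticipate is the case analysis required to build $f$ and to certify $IC(f)\cong IC(f')$, organised according to whether $x$ and $y$ are $0$-points, $2$-points, or interior points of sides, and whether they share a diamond. The genuinely delicate case is when $x$ and $y$ are \emph{both} $2$-points, i.e.\ faces of $G$: then the separating face $f$ chosen naively need not be realisable in a polyhedral embedding, because two faces of $G$ may meet in many edges. This is precisely where the hypothesis "$O$ is edge-preserving or $G^*$ is simple" enters. If $G^*$ is simple the two faces share at most two edges, which keeps the configuration polyhedrally realisable; if instead $O$ is edge-preserving, Lemma~\ref{lem:edge-path} supplies, in each relevant diamond, an edge-path avoiding both $2$-points, so a $2$-cut consisting of $2$-points cannot separate $a$ from a vertex-shadow at all. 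In either reading the offending configuration is excluded, and the polyhedral-cut contradiction goes through.
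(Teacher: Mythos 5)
Your overall strategy is the paper's own: fix a cut-path of one of the two special forms from Lemma~\ref{lem:type2_cutpath}, build a subgraph of $B_{O(G)}$ out of the copies of that path so that the component $A$ is trapped inside a simple face $f$, and feed $f$ to Lemma~\ref{lem:polyhedral_cut} with the cube as $G'$; the role you assign to the hypothesis ``$O$ edge-preserving or $G^*$ simple'' (via Lemma~\ref{lem:edge-path} supplying edge-paths that avoid the 2-points) is also the right one. However, there is a genuine gap in your verification of hypothesis~$(3)$ of Lemma~\ref{lem:polyhedral_cut}. You argue that a type-2 path from $a$ to a $0$-vertex on the boundary of $f$ avoiding $\{x,y\}$ would reach a vertex-shadow vertex and hence contradict the choice of $A$, ``because the boundary of $f$ is drawn along sides carrying vertex-shadow vertices.'' But the boundary of $f$ is \emph{not} made up only of vertex-shadow vertices. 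The copies of the cut-path contain, besides the endpoint $q_0\in N_0(v_0)$, the vertex $q_2\in N_0(v_2)$ --- which lies in a \emph{face}-shadow, not a vertex-shadow --- as well as possibly further interior type-0 vertices (and, in case (b), the copies of $q_1$). For such a vertex the assumption on $A$ says nothing: a priori it could itself belong to $A$, in which case a type-2 path from $a$ to it avoiding $\{x,y\}$ produces no contradiction, and hypothesis~$(3)$ fails.

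Closing this gap is a substantial part of the paper's proof: for every copy $q_2^D$ of $q_2$ it constructs three pairwise vertex-disjoint paths from three different vertex-shadows to $q_2^D$, using the copies of $Q$ in the double chamber $D$ and in the two double chambers sharing a 1-side with $D$, extended through the common face-shadow; since $x$ and $y$ can block at most two of these, no copy of $q_2$ lies in $A$ (and an analogous three-path argument handles the copies of $q_1$ in case (b), using the path $R$ and the two halves of the 2-side). Only after that, together with a careful choice of $a$ (taking $a$ inside $\widetilde{Q}_G$ if possible and deleting the subpath between $x$ and $y$ that contains it), can one assert that no $0$-vertex on the boundary of $f$ is in $A$. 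A second, smaller imprecision: you locate the delicate case at ``$x$ and $y$ both 2-points,'' whereas the actual obstruction is that when $G^*$ is not simple the boundary of $f$ need not be a cycle (a 2-point can occur twice in the facial walk, threatening hypothesis~$(4)$), which is why the paper splits on ``$G^*$ simple or neither of $x,y$ is a 2-point'' versus ``$G^*$ not simple and $x$ or $y$ is a 2-point,'' and in the latter case augments $\widetilde{Q}_G$ with the edge-paths of Lemma~\ref{lem:edge-path} to subdivide the faces rather than arguing that 2-points ``cannot separate $a$ from a vertex-shadow at all'' (the edge-paths join vertex-shadows to each other, not $a$ to a vertex-shadow).
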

\begin{proof}
	
Assume that $O$ is edge-preserving or $G^*$ is simple, and that there is a component $A$ of $O(G)\setminus \{x,y\}$ that does not contain a vertex in a vertex-shadow. Let $P$ be a cut-path as in Lemma \ref{lem:type2_cutpath}. We discuss the two different cases separately. In \textbf{case (a)} $P$ satisfies (a) of Lemma~\ref{lem:type2_cutpath}, in \textbf{case (b)} $P$ satisfies (b) of Lemma~\ref{lem:type2_cutpath}.

\begin{itemize}
	\item[\textbf{case (a)}] Let $Q$ be the subpath of $P_{v_0,v_2}=:\widetilde{Q}$ between the vertex $q_0$ in $N_0(v_0)$ and the vertex $q_2$ in $N_0(v_2)$. For every double chamber $D$, let $\widetilde{Q}_L^D$ denote the left copy of $\widetilde{Q}$ in that double chamber. We will also use a superscript $D$ to denote the copies of $q_0$, $q_2$ and $Q$ in $\widetilde{Q}_L^D$. The vertex $q_0^D$ is not in component $A$ by assumption. Consider the double chambers $D_L$ and $D_R$ that share a 1-side with double chamber $D$. The only vertices of $Q^{D_L}$ and $Q^{D_R}$ that are in a face-shadow are $q_2^{D_L}$ and $q_2^{D_R}$. The path $Q^D$ and the paths $Q^{D_L}$ and $Q^{D_R}$ extended with paths in the face-shadow from $q_2^{D_L}$ and $q_2^{D_R}$ to $q^D_2$ form three paths from different vertex-shadows to $q_2^D$. The paths are vertex-disjoint because $D$, $D_L$ and $D_R$ have different 2-sides. As $x$ and $y$ can only be in two of these paths, $q_2^D$ is not in component $A$. It follows that no copy of $q_2$ is in component $A$.
	
	Let $\widetilde{Q}_G$ be the embedded subgraph $\pi^{-1}(\widetilde{Q})$ of $B_{O(G)}$. Note that every face of $\widetilde{Q}_G$ is simple. Let $a$ be a vertex in component $A$. If there is a vertex of $A$ in $\widetilde{Q}_G$ then let $a$ be such a vertex. Then it appears in a copy of $Q$ between $x$ and $y$, for the endpoints of copies of $Q$ are not in $A$. Remove this path from $\widetilde{Q}_G$. Now $a$ is in the interior of a simple face $f$ of the modified $\widetilde{Q}_G$. The two possible configurations of this face are shown in Figure~\ref{fig:case1}. There are two cases:
	
	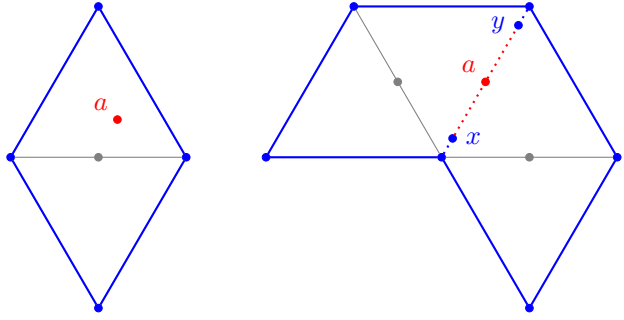
\begin{figure}
		\centering
		\begin{tikzpicture}
\tikzset{bluevertex/.style={shape=circle, draw=blue, scale=0.3, fill=blue}}
\tikzset{grayvertex/.style={shape=circle, draw=gray, scale=0.3, fill=gray}}
\tikzset{redvertex/.style={shape=circle, draw=red, scale=0.3, fill=red}}
\tikzset{blueedge/.style={shape=circle, draw=blue, thick}}
\tikzset{grayedge/.style={shape=circle, draw=gray}}

\tikzset{noNode/.style={draw=none}}


\node[draw=none] (_) at (0, {-4.82*tan(30)}) {};

\node[grayvertex] (v1) at (0,0) {};
\node[bluevertex] (v2') at (0,-2) {};
\node[bluevertex] (v2) at (0,2) {};
\node[bluevertex] (v0) at ({2*tan(30)}, 0) {};
\node[bluevertex] (v0') at ({-2*tan(30)}, 0) {};

\node[redvertex,label={[label distance=-0.05cm]120:{$\color{red}{a}$}}] (b) at (0.25, 0.5) {};

\begin{scope}[grayedge]
\draw  (v2) --(v0) -- (v2') --(v0') -- (v2);
\draw (v0) -- (v1) -- (v0');
\end{scope}

\begin{scope}[blueedge]
\draw  (v2) --(v0) -- (v2') --(v0') -- (v2);
\end{scope}

\end{tikzpicture} \qquad
		\begin{tikzpicture}
\tikzset{bluevertex/.style={shape=circle, draw=blue, scale=0.3, fill=blue}}
\tikzset{grayvertex/.style={shape=circle, draw=gray, scale=0.3, fill=gray}}
\tikzset{redvertex/.style={shape=circle, draw=red, scale=0.3, fill=red}}
\tikzset{blueedge/.style={shape=circle, draw=blue, thick}}
\tikzset{grayedge/.style={shape=circle, draw=gray}}

\tikzset{noNode/.style={draw=none}}


\node[draw=none] (_) at (0, {-4.82*tan(30)}) {};

\node[grayvertex] (v1) at (0,0) {};
\node[bluevertex] (v2') at (0,-2) {};
\node[bluevertex] (v2) at (0,2) {};
\node[bluevertex] (v0) at ({2*tan(30)}, 0) {};
\node[bluevertex] (v0') at ({-2*tan(30)}, 0) {};
\node[grayvertex] (v1l) at ({-3*tan(30)}, 1) {};

\node[redvertex,label={[label distance=-0.05cm]120:{$\color{red}{a}$}}] (b) at ({-1*tan(30)}, 1) {};
\node[bluevertex] (v0l) at ({-4*tan(30)}, 2) {};
\node[bluevertex] (v2l) at ({-6*tan(30)}, 0) {};

\node[bluevertex,label={[label distance=-0.0cm]180:{$\color{blue}{y}$}}] (y) at ({-0.25*tan(30)}, 1.75) {};
\node[bluevertex,label={[label distance=-0.0cm]0:{$\color{blue}{x}$}}] (x) at ({-1.75*tan(30)}, 0.25) {};

\begin{scope}[grayedge]
\draw (v0) -- (v1) -- (v0') --(v1l) -- (v0l);
\end{scope}

\begin{scope}[blueedge]
\draw  (v2) --(v0) -- (v2') --(v0') -- (v2l) -- (v0l) -- (v2);
\draw[dotted] (v0') --(x);
\draw[dotted] (v2) --(y);
\end{scope}

\draw[red, thick, dotted] (x) -- (b) -- (y);

\end{tikzpicture}
		\caption{This figure shows the two possible configurations in the proof of Lemma~\ref{lem:diff_v0sets}, \textbf{case (a)}. The blue lines show $f$, the gray lines show $D_G$. In the right image the vertices $x$ and $y$ are not drawn as vertices of $f$, but they could be the 0- or 2-point of the 1-side containing $a$.}
		\label{fig:case1}
	\end{figure}
	
	\begin{itemize}
		\item  \textbf{$G^*$ is simple or neither $x$ nor $y$ are a 2-point.} We prove that $f$, $x$ and $y$ satisfy the properties of Lemma~\ref{lem:polyhedral_cut}, which is a contradiction with that lemma.
		\begin{enumerate}[(1)]
			\item Take for example the cube as $G'$.
			\item By definition of $\widetilde{Q}_G$.
			\item It suffices to prove that no 0-vertex in the boundary of $f$ is in component $A$. If we did not remove a copy of $\widetilde{Q}$ this follows by the choice of $a$. Otherwise the only way that a 0-vertex in the boundary of $f$ could be in component $A$ is if $x$ and $y$ are in another copy $\widetilde{Q}$ together, which is only possible if one of them is a 2-point and $G^*$ has a loop. We assumed that that is not the case.		 
			\item The only way that $f$ is not a cycle is if the 2-points drawn in Figure~\ref{fig:case1} do not represent different 2-points, i.e. if $G^*$ is not simple. In that case neither $x$ nor $y$ are a 2-point so they appear only once in the facial walk of $f$.
		\end{enumerate}

		\item \textbf{$G^*$ is not simple, and therefore $O$ is edge-preserving, and $x$ or $y$ is a 2-point of $D_G$.} For every diamond around an edge of $G$, add an edge-path (Definition~\ref{def:edge-path}) that does not contain either of the 2-points of the diamond to $\widetilde{Q}_G$. Such an edge-path exists because of Lemma~\ref{lem:edge-path}. Let $\widetilde{Q}^+_G$ be the result. 
		
		Let $f$ be the face of $\widetilde{Q}^+_G$ that has $a$ in its interior. The different cases are shown in Figure~\ref{fig:case1+}. Note that $a$ is not in one of the added edge-paths because $x$ or $y$ is a 2-point of $D_G$. Again we prove that $f$, $x$ and $y$ satisfy the properties in Lemma~\ref{lem:polyhedral_cut}, which is a contradiction.
		\begin{enumerate}[(1)]
			\item Take for example the cube as $G'$.
			\item By definition of $\widetilde{Q}_G^+$
			\item 
			Again it follows from the choice of $a$ and the restrictions on the location of $x$ and $y$ that no 0-vertex in the boundary of $f$ is in component $A$. 	
			
			\item In both cases the boundary of $f$ is a cycle.
		\end{enumerate}
	\end{itemize} 

	\begin{figure}
	\centering
	\begin{tikzpicture}
\tikzset{bluevertex/.style={shape=circle, draw=blue, scale=0.3, fill=blue}}
\tikzset{grayvertex/.style={shape=circle, draw=gray, scale=0.3, fill=gray}}
\tikzset{redvertex/.style={shape=circle, draw=red, scale=0.3, fill=red}}
\tikzset{blueedge/.style={shape=circle, draw=blue, thick}}
\tikzset{grayedge/.style={shape=circle, draw=gray}}

\tikzset{noNode/.style={draw=none}}


\node[draw=none] (_) at (0, {-4.82*tan(30)}) {};

\node[grayvertex] (v1) at (0,0) {};
\node[grayvertex] (v2') at (0,-2) {};
\node[bluevertex] (v2) at (0,2) {};
\node[bluevertex] (v0) at ({2*tan(30)}, 0) {};
\node[bluevertex] (v0') at ({-2*tan(30)}, 0) {};

\node[redvertex,label={[label distance=-0.05cm]120:{$\color{red}{a}$}}] (b) at (0.3, 1) {};



\begin{scope}[grayedge]
\draw   (v0') -- (v2')--(v0);
\draw (v0) -- (v1) -- (v0') ;
\end{scope}

\begin{scope}[blueedge]
\draw  (v2) --(v0);
\draw (v0') -- (v2);
\end{scope}


\draw [blueedge] plot [smooth] coordinates {({-2*tan(30)},0) (0,-0.7) ({1*tan(30)},0.2) ({2*tan(30)},0)};

\end{tikzpicture}\qquad\qquad
	\begin{tikzpicture}
\tikzset{bluevertex/.style={shape=circle, draw=blue, scale=0.3, fill=blue}}
\tikzset{grayvertex/.style={shape=circle, draw=gray, scale=0.3, fill=gray}}
\tikzset{redvertex/.style={shape=circle, draw=red, scale=0.3, fill=red}}
\tikzset{blueedge/.style={shape=circle, draw=blue, thick}}
\tikzset{grayedge/.style={shape=circle, draw=gray}}

\tikzset{noNode/.style={draw=none}}


\node[draw=none] (_) at (0, {-4.82*tan(30)}) {};

\node[grayvertex] (v1) at (0,0) {};
\node[grayvertex] (v2') at (0,-2) {};
\node[bluevertex,label={[label distance=-0.0cm]0:{$\color{blue}{y}$}}] (y) at (0,2) {};
\node[bluevertex] (v0) at ({2*tan(30)}, 0) {};
\node[bluevertex] (v0') at ({-2*tan(30)}, 0) {};
\node[grayvertex] (v1l) at ({-3*tan(30)}, 1) {};

\node[redvertex,label={[label distance=-0.05cm]120:{$\color{red}{a}$}}] (b) at ({-1*tan(30)}, 1) {};
\node[bluevertex] (v0l) at ({-4*tan(30)}, 2) {};
\node[grayvertex] (v2l) at ({-6*tan(30)}, 0) {};


\node[bluevertex,label={[label distance=-0.0cm]0:{$\color{blue}{x}$}}] (x) at ({-1.75*tan(30)}, 0.25) {};

\begin{scope}[grayedge]
\draw  (v0l) -- (v2l) -- (v0') -- (v2')--(v0);
\draw (v0) -- (v1) -- (v0') --(v1l) -- (v0l);
\end{scope}

\begin{scope}[blueedge]
\draw  (y) --(v0);
\draw (v0l) -- (y);
\draw[dotted] (v0') --(x);
\end{scope}

\draw[red, thick, dotted] (x) -- (b) -- (y);

\draw [blueedge] plot [smooth] coordinates {({-4*tan(30)},2) ({-3*tan(30) - 0.7},{1-0.7*tan(30)}) ({-2.5*tan(30) +0.2},{0.5+0.2*tan(30)}) ({-2*tan(30)},0)};
\draw [blueedge] plot [smooth] coordinates {({-2*tan(30)},0) (0,-0.7) ({1*tan(30)},0.2) ({2*tan(30)},0)};

\end{tikzpicture}
	\caption{This figure shows the two possible configurations in the proof of Lemma~\ref{lem:diff_v0sets}, \textbf{case (a)}. The blue lines show $f$, the gray lines show $D_G$. In the right image $x$ is not drawn as a vertex of $f$, but it could be the 0-point of the 1-side containing $a$.}
	\label{fig:case1+}
	\end{figure}
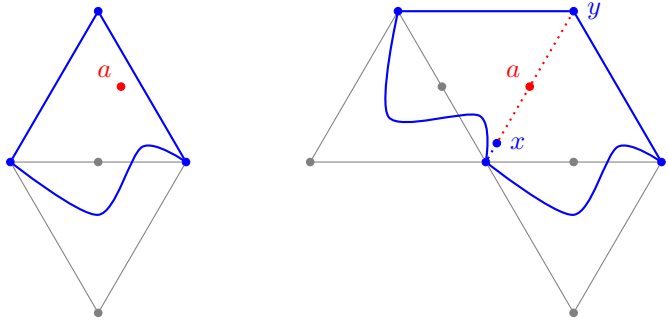

	 \item[\textbf{case (b)}] 
	 Let $\widetilde{Q}$ be the union of $P_{v_0,v_1}$ and the path $R$ from Lemma~\ref{lem:type2_cutpath}. If $v_2$ is of type 2 also add $v_2$ and its edge to the vertex $q_2\in N_0(v_2)$ in $R$. If $v_1$ is of type 0 then $\widetilde{Q}$ is a path, if $v_1$ is of type 1 then $\widetilde{Q}$ is the union of a path and an edge between $v_1$ and the vertex $q_1$ in the path. Let $Q$ be the type-2 path in $\widetilde{Q}$ between $q_0\in N_0(v_0)$ and $q_2$. In $O_P$ there is at least one path corresponding to the path $Q$. If there are two, choose one of them. There is a copy of that path in every double chamber. Let $Q^D$ denote the copy in double chamber $D$. Let $q_0^D$, $q_1^D$ and $q_2^D$ be the copies of $q_0$, $q_1$ and $q_2$ in $Q^D$.
	 
	 Except for perhaps 0- or 2-points, the paths $Q^D$ are disjoint for different $D$. Each path $Q^D$ contains only one vertex in a face-shadow, namely $q_2^D$. It follows as in the first paragraph of \textbf{case (a)} that we can construct three vertex-disjoint paths from vertices in vertex-images to any copy of $q_2$, which proves that no copy of $q_2$ is in component $A$. 
	 
	 In every double chamber there are two vertex-disjoint type-2 paths on the 2-side from the copies of $q_0$ to $q_1$. The type-2 path $R$ described in Lemma~\ref{lem:type2_cutpath} induces a path from the copy of $q_2$ to the copy of $q_1$. These three paths are vertex-disjoint paths from vertices in component $A$ to $q_1$. As there are three such paths and only two vertices $x$ and $y$, it follows that no copy of $q_1$ is in component $A$.
	 
	 Let $\widetilde{Q}_G$ be the subgraph $\pi^{-1}(\widetilde{Q})$ of $B_{O(G)}$. Let $a$ be a vertex in $O(G)\setminus \{x,y\}$ that is in component $A$. If possible choose $a$ in $\widetilde{Q}_G$. If $a$ is in $\widetilde{Q}_G$ then $x$ and $y$ are in the same subpath of $\widetilde{Q}_G$ as $a$ between copies of $q_i$ and $q_j$. Remove this path from $\widetilde{Q}_G$. Then $a$ is in the interior of a face $f$ of the modified $\widetilde{Q}_G$. As before, we prove that $f$, $x$ and $y$ satisfy the properties of Lemma~\ref{lem:polyhedral_cut}, which is a contradiction with that Lemma.
	 \begin{enumerate}[(1)]
	 	\item Take for example the cube as $G'$.
	 	\item By definition of $\widetilde{Q}_G$.
	 	\item Again it follows from the choice of $a$ and the restrictions on the location of $x$ and $y$ that no 0-vertex in the boundary of $f$ is in component $A$. 
	 	
	 	\item In the first two cases of Figure~\ref{fig:case2} the boundary of $f$ is a cycle. In the third case the two drawn 2-points may be the same, but $x$ and $y$ cannot be this 2-point as they are both on a 2-side. It follows that $x$ and $y$ each appear at most once in the facial walk of $f$.
	 \end{enumerate}

	 	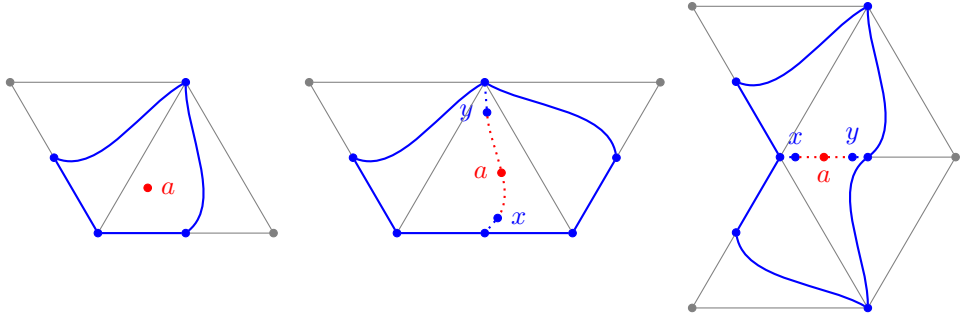
\begin{figure}
	 	\centering
	 	\begin{tikzpicture}[baseline=-31pt]
\tikzset{bluevertex/.style={shape=circle, draw=blue, scale=0.3, fill=blue}}
\tikzset{grayvertex/.style={shape=circle, draw=gray, scale=0.3, fill=gray}}
\tikzset{redvertex/.style={shape=circle, draw=red, scale=0.3, fill=red}}
\tikzset{blueedge/.style={shape=circle, draw=blue, thick}}
\tikzset{grayedge/.style={shape=circle, draw=gray}}

\tikzset{noNode/.style={draw=none}}

\clip (-2.4,-0.1) rectangle (1.3,2.2);

\node[draw=none] (_) at (0, {-4.82*tan(30)}) {};

\node[bluevertex] (v1) at (0,0) {};
\node[bluevertex] (v2) at (0,2) {};
\node[grayvertex] (v0) at ({2*tan(30)}, 0) {};
\node[bluevertex] (v0') at ({-2*tan(30)}, 0) {};
\node[bluevertex] (v1l) at ({-3*tan(30)}, 1) {};

\node[redvertex,label={[label distance=-0.0cm]-0:{$\color{red}{a}$}}] (b) at (-0.5, 0.6) {};
\node[grayvertex] (v0l) at ({-4*tan(30)}, 2) {};


\begin{scope}[grayedge]
\draw (v0) -- (v1);
\draw (v1l) -- (v0l);
\draw  (v0') -- (v2) --(v0) ;
\draw (v2) -- (v0l);
\end{scope}

\begin{scope}[blueedge]
\draw (v1) --(v0') -- (v1l);
\draw (v2) edge[looseness=0.8, in= 40, out =-90] (v1);
\draw (v2) edge[looseness=0.8, in= -20, out =-150] (v1l);
\end{scope}

\end{tikzpicture}
	 	\begin{tikzpicture}[baseline=-31pt]
\tikzset{bluevertex/.style={shape=circle, draw=blue, scale=0.3, fill=blue}}
\tikzset{grayvertex/.style={shape=circle, draw=gray, scale=0.3, fill=gray}}
\tikzset{redvertex/.style={shape=circle, draw=red, scale=0.3, fill=red}}
\tikzset{blueedge/.style={shape=circle, draw=blue, thick}}
\tikzset{grayedge/.style={shape=circle, draw=gray}}

\tikzset{noNode/.style={draw=none}}

\clip (-2.4,-0.1) rectangle (2.4,2.2);

\node[draw=none] (_) at (0, {-4.82*tan(30)}) {};

\node[bluevertex] (v1) at (0,0) {};
\node[bluevertex] (v2) at (0,2) {};
\node[bluevertex] (v0) at ({2*tan(30)}, 0) {};
\node[bluevertex] (v0') at ({-2*tan(30)}, 0) {};
\node[bluevertex] (v1l) at ({-3*tan(30)}, 1) {};

\node[redvertex,label={[label distance=-0.0cm]-180:{$\color{red}{a}$}}] (b) at (0.22, 0.8) {};
\node[grayvertex] (v0r) at ({4*tan(30)}, 2) {};
\node[grayvertex] (v0l) at ({-4*tan(30)}, 2) {};
\node[bluevertex] (v1r) at ({3*tan(30)}, 1) {};

\node[bluevertex,label={[label distance=-0.0cm]180:{$\color{blue}{y}$}}] (y) at (0.03, 1.6) {};
\node[bluevertex,label={[label distance=-0.0cm]0:{$\color{blue}{x}$}}] (x) at (0.17, 0.2) {};

\begin{scope}[grayedge]
\draw (v1l) -- (v0l);
\draw  (v0') -- (v2) --(v0) ;
\draw (v1r) --(v0r) --(v2) -- (v0l);
\end{scope}

\begin{scope}[blueedge]
\draw (v1r) -- (v0) --(v1) --(v0') -- (v1l);
\draw (v2) edge[looseness=0.8, in= 100, out =-30] (v1r);
\draw (v2) edge[looseness=0.8, in= -20, out =-150] (v1l);
\draw[dotted] (x) -- (v1)
			    (y) -- (v2);
\end{scope}

\draw[red, thick, dotted] (y) edge[looseness=0.8, in= 60, out =-85] (x);
\end{tikzpicture}
	 	\begin{tikzpicture}
\tikzset{bluevertex/.style={shape=circle, draw=blue, scale=0.3, fill=blue}}
\tikzset{grayvertex/.style={shape=circle, draw=gray, scale=0.3, fill=gray}}
\tikzset{redvertex/.style={shape=circle, draw=red, scale=0.3, fill=red}}
\tikzset{blueedge/.style={shape=circle, draw=blue, thick}}
\tikzset{grayedge/.style={shape=circle, draw=gray}}

\tikzset{noNode/.style={draw=none}}

\clip (-2.4,-2.1) rectangle (1.3,2.2);

\node[draw=none] (_) at (0, {-4.82*tan(30)}) {};

\node[bluevertex] (v1) at (0,0) {};
\node[bluevertex] (v2') at (0,-2) {};
\node[bluevertex] (v2) at (0,2) {};
\node[grayvertex] (v0) at ({2*tan(30)}, 0) {};
\node[bluevertex] (v0') at ({-2*tan(30)}, 0) {};
\node[bluevertex] (v1l) at ({-3*tan(30)}, 1) {};
\node[bluevertex] (v1l') at ({-3*tan(30)}, -1) {};

\node[redvertex,label={[label distance=-0.0cm]-90:{$\color{red}{a}$}}] (b) at ({-1*tan(30)}, 0) {};
\node[grayvertex] (v0l) at ({-4*tan(30)}, 2) {};
\node[grayvertex] (v0l') at ({-4*tan(30)}, -2) {};

\node[bluevertex,label={[label distance=-0.0cm]90:{$\color{blue}{y}$}}] (y) at ({-0.35*tan(30)}, 0) {};
\node[bluevertex,label={[label distance=-0.0cm]90:{$\color{blue}{x}$}}] (x) at ({-1.65*tan(30)}, 0) {};

\begin{scope}[grayedge]
\draw (v0) -- (v1);
\draw (v1l) -- (v0l);
\draw  (v0') -- (v2) --(v0) -- (v2') --(v0');
\draw (v1l') -- (v0l') -- (v2');
\draw (v2) -- (v0l);
\end{scope}

\begin{scope}[blueedge]
\draw (v0') -- (v1l);
\draw (v2) edge[looseness=0.8, in= 40, out =-90] (v1);
\draw (v2') edge[looseness=0.8, in= 220, out =90] (v1);
\draw (v2') edge[looseness=0.8, in= -80, out =150] (v1l');
\draw (v2) edge[looseness=0.8, in= -20, out =-150] (v1l);
\draw  (v1l') -- (v0');
\draw[dotted] (x) -- (v0')
			    (y) -- (v1);
\end{scope}

\draw[red, thick, dotted] (x) -- (b)--(y);
\end{tikzpicture}
	 	\caption{This figure shows the three possible configurations in the proof of Lemma~\ref{lem:diff_v0sets}, \textbf{case (b)}. The blue lines show $f$, the gray lines show $D_G$. The vertices $x$ and $y$ are not drawn as vertices of $f$, but either one could be a point of $D_G$ in the path containing $a$.}
	 	\label{fig:case2}
	 \end{figure}
\end{itemize}
\end{proof}

\begin{lemma}\label{lem:no_2_in_v0set}
	Let $G$ be a simple 3-connected embedded graph, let $O$ be a $c3$-lopsp-operation different from Dual and let $\{x,y\}$ be a set of vertices in $O(G)$. If $O$ is edge-preserving or $G^*$ is simple, then in every component $A$ of $O(G)\setminus \{x,y\}$ there is a vertex that is in a vertex-shadow of a vertex of $G$ that is not broken by $\{x,y\}$.
\end{lemma}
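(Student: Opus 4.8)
The plan is to bootstrap from Lemma~\ref{lem:diff_v0sets}, which already guarantees that $A$ contains a vertex lying in \emph{some} vertex-shadow; the only thing left is to rule out that every such vertex belongs to a \emph{broken} vertex. So I would assume for contradiction that $A$ contains no vertex in the vertex-shadow of a non-broken vertex, and let $z\in A$ be a vertex-shadow vertex supplied by Lemma~\ref{lem:diff_v0sets}; then $z$ lies in $S_O(v)$ for some broken vertex $v$. First I would dispose of the case $t(v_0)=0$: by Lemma~\ref{lem:image_singleton_or_face} every vertex-shadow is then a singleton, so by Definition~\ref{def:break_vertex} a vertex is broken exactly when its unique shadow vertex lies in $\{x,y\}$. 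Since $x$ and $y$ are deleted, the image of a broken vertex cannot lie in $A$, so $z$ is the image of a non-broken vertex, a contradiction. Hence $t(v_0)=2$, and by Corollary~\ref{cor:breaking_vertices} the set $\{x,y\}$ breaks at most one vertex; as $v$ is broken, it breaks exactly $v$.

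By Lemma~\ref{lem:verteximage_cycle} the shadow $C:=S_O(v)$ is a simple cycle, and the argument in the proof of Corollary~\ref{cor:breaking_vertices} shows that $x$ and $y$ are two non-adjacent vertices of $C$. Deleting them splits $C$ into two arcs, and since $v$ is broken these arcs lie in different components of $O(G)\setminus\{x,y\}$; write $C_1$ for the arc in $A$, so that $A\cap C=C_1$. Because $G$ is $3$-connected, $v$ has at least three neighbours $w_1,\dots,w_d$ with $d\geq 3$. For each edge $\{v,w_i\}$ Lemma~\ref{lem:edge-path} provides an edge-path from $C$ to $S_O(w_i)$ inside the diamond around $\{v,w_i\}$; its vertices off $C$ lie in that diamond, and its intersection with $C$ is confined to that diamond, so the two vertices $x,y$ can block at most two of these edge-paths. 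Hence at least $d-2\geq 1$ edge-paths survive in $O(G)\setminus\{x,y\}$, and if any survivor starts on $C_1$ then $S_O(w_i)$ meets $A$; since $w_i\neq v$ is not broken, this contradicts our assumption. Thus the whole difficulty is concentrated in the degenerate situation in which $x$ and $y$ lie so close together on $C$ that every edge-path meeting $C_1$ is blocked; then $C_1$ borders only the face-shadow $S_O(f)$ of a single face $f$ incident with $v$, and $A$ meets $S_O(f)$.

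To finish, and to treat all positions of $x,y$, all genera, and the face-shadow vertices that $A$ is allowed to contain in one stroke, I would reproduce the trapped region inside a polyhedral embedding and invoke Lemma~\ref{lem:polyhedral_cut}. Choosing a cut-path as in Lemma~\ref{lem:type2_cutpath} and assembling a subgraph $H\subseteq B_{O(G)}$ from the corresponding preimage $\pi^{-1}(\widetilde{Q})$, the cycle $C$, and (when needed) edge-paths avoiding the $2$-points, one locates a vertex $a\in A$ in the interior of a simple face $f$ of $H$. Conditions (1), (2) and (4) of Lemma~\ref{lem:polyhedral_cut} then follow from the construction, taking the cube as $G'$ and using that $C$ is a cycle so that $x$ and $y$ each occur once on $\partial f$, while condition (3) holds because every type-2 path from $a$ to a $0$-vertex of $\partial f$ that lies in a non-broken shadow must leave $A$ and hence pass through $x$ or $y$. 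This exhibits $\{x,y\}$ as a $2$-cut of the polyhedral embedding $O(G')$, contradicting that $O$ is $c3$.

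The main obstacle is exactly this degenerate placement of $x$ and $y$ within a single face-region of the broken cycle, where $C_1$ does not directly reach any neighbouring vertex-shadow: here one must escalate through the adjacent face-shadow $S_O(f)$ and, simultaneously, build $H$ carefully enough that Lemma~\ref{lem:polyhedral_cut} applies despite $x,y$ lying on the broken cycle $C$ and despite $A$ legitimately containing face-shadow vertices. Verifying condition (3) in the presence of these face-shadow vertices, i.e.\ that no $0$-vertex of $\partial f$ reachable from $a$ without crossing $\{x,y\}$ belongs to a non-broken vertex-shadow, is the delicate point, and is where the hypothesis that $O$ is edge-preserving or that $G^*$ is simple, acting through Lemma~\ref{lem:edge-path} and Lemma~\ref{lem:2_broken}, is expected to be essential.
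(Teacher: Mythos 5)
Your opening reduction (disposing of $t(v_0)=0$, then using Corollary~\ref{cor:breaking_vertices} and Lemma~\ref{lem:verteximage_cycle} to get exactly one broken vertex $v$ with $x,y$ non-adjacent on the cycle $C=S_O(v)$) agrees with the paper. The genuine gap is in your final step. To invoke Lemma~\ref{lem:polyhedral_cut} you must verify its condition (3), which demands that \emph{every} type-2 path from $a$ to \emph{any} 0-vertex on the boundary of the face $f$ of your scaffold $H$ passes through $x$ or $y$ --- equivalently, that no 0-vertex of $H$ lies in the component $A$. You verify this only for boundary 0-vertices lying in non-broken vertex-shadows, where the standing assumption on $A$ does the work. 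But the boundary of $H$ also contains 0-vertices of a different kind: the copies of $q_2$, which lie in \emph{face}-shadows, and the intermediate type-0 vertices of the copies of $Q$ (or of $R$ and $P_{v_0,v_1}$ in case (b)), which lie in no shadow at all. Nothing in your proposal prevents these from belonging to $A$; indeed your own observation that ``$A$ meets $S_O(f)$'' in the degenerate case shows they genuinely threaten to. Ruling this out is exactly where the paper's proof does its real work: assuming a copy of $Q$ contains a vertex $a\in A$, it deduces that the copy $q_2^{a}$ of $q_2$ is also in $A$, builds three vertex-disjoint paths from distinct vertex-shadows to $q_2^{a}$ (two of them running through $S_O(f)$ in the neighbouring double chambers), concludes that both $x$ and $y$ must lie in $S_O(f)$, and then exhibits the non-trivial 4-cycle $v,x,f,y$ with $q_2^{a}$ in its interior, contradicting Lemma~\ref{lem:c3char}; a separate three-disjoint-paths argument handles the copies of $q_1$ and $R$ in case (b). No counterpart of this appears in your proposal, and without it condition (3) cannot be established.

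There is also a structural problem with your scaffold: you include the cycle $C$ itself in $H$. Since $C_1=C\cap A$ is non-empty, vertices of $A$ then lie \emph{on} $H$, so any face of $H$ whose interior contains a vertex $a\in A$ and whose boundary meets $C_1$ violates condition (3) outright --- $a$ and that boundary vertex are in the same component of $O(G)\setminus\{x,y\}$, hence joined by a type-2 path avoiding $x$ and $y$ --- while if all of $A$ were to sit on $H$ there would be no interior vertex $a$ to start from. The paper avoids this by taking $H=\pi^{-1}(\widetilde{Q})$ only and proving that this scaffold is disjoint from $A$. Two smaller points: your claim that $x,y$ block ``at most two'' edge-paths is not justified (a vertex of $S_O(v)$ on a shared 1-side lies in two diamonds, so the pair can block up to four), though you discard that count anyway; and in this lemma the hypothesis that $O$ is edge-preserving or $G^*$ is simple enters only through the appeal to Lemma~\ref{lem:diff_v0sets}, not through Lemma~\ref{lem:2_broken} or Lemma~\ref{lem:edge-path}.
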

\begin{proof}
	If $t(v_0)=0$ this follows immediately from Lemma~\ref{lem:diff_v0sets}, so assume that $t(v_0)=2$. If $\{x,y\}$ breaks no vertices of $G$ we are done, so assume that a vertex $v$ is broken by $\{x,y\}$. It follows from Lemma~\ref{lem:verteximage_cycle} that $v$ is the only vertex broken by $\{x,y\}$ and $x$ and $y$ are in $S_O(v)$. By Lemma~\ref{lem:diff_v0sets}, every component contains at least one vertex in a vertex-shadow. Assume that there is a component $A$ of $O(G)\setminus \{x,y\}$ that does not contain vertices of other vertex-shadows than $S_O(v)$. 
	
	Let the cut-path $P$ of $O$ be as in Lemma~\ref{lem:type2_cutpath}. We define a type-2 subgraph $Q$ in $O$ and prove that no vertex of $O(G)$ in $\pi^{-1}(Q)$ is in component $A$ for the two different cases of cut-paths in Lemma~\ref{lem:type2_cutpath} separately:
		
	\begin{itemize}
		\item[\textbf{case (a):}]
		Let $Q$ be the type-2 subpath of $P_{v_0,v_2}=:\widetilde{Q}$ between $q_0\in S_O(v_0)$ and $q_2\in S_O(v_2)$. Assume that there is a vertex $a$ in component $A$ that is in a copy $Q^{a}$ of $Q$. Let $q_2^{a}$ be the copy of $q_2$ in $Q^{a}$. 
		
		The vertices $x$ and $y$ are in $S_O(v)$ so they are not between $q_2^{a}$ and $a$ in $Q^{a}$. This implies that $q_2^{a}$ and $a$ are in the same component of $O(G) \setminus \{x,y\}$ and thus $q_2^{a}$ is in component $A$. Let $f$ be the face of $G$ with $q_2^{a}\in S_O(f)$. In the two double chambers that share the 1-side containing $Q^{a}$ there are two other copies of $Q$. Adding the vertices of $S_O(f)$ in those double chambers to those copies of $Q$ we get vertex-disjoint paths from two vertices in different vertex-shadows than $S_O(v)$ to $q_2^{a}$. For $q_2^{a}$ to be in $A$, those two paths must each contain one vertex of $\{x,y\}$. As $x$ and $y$ cannot be in the copies of $Q$ in those paths, they must both be in $S_O(f)$. Now $v,x,f,y$ is a 4-cycle in two adjacent copies of $O_P$ that has at least one vertex, $q_2^{a}$, of type 0 on its inside. By Lemma~\ref{lem:c3char} $O$ is not $c3$, a contradiction. We have proved that there are no vertices of component $A$ in copies of $Q$.

		\item[\textbf{case (b):}] 
		Let $\widetilde{Q}$ be the union of $P_{v_0,v_1}$ and the path $R$ from Lemma~\ref{lem:type2_cutpath}. If $v_2$ is of type 2 also add $v_2$ and its edge to the vertex $q_2\in N_0(v_2)$ in $R$. 
		Consider the type-2 path on a 2-side of a double chamber between two vertices in different vertex-shadows. As $x$ and $y$ are in the same vertex-shadow and those vertices cannot be in the path except as one endpoint, no vertex of the path is in component $A$. It follows that no vertex in a copy of $P_{v_0,v_1}$ is in component $A$. 
		
		Now assume that there is a vertex $a\in A$ in a copy $R^{a}$ of $R$. Let $D^{a}$ be the double chamber containing $R^{a}$ and let $D^{a}_L$ and $D^{a}_R$ be the two double chambers sharing a 1-side with $D^{a}$. There are three paths from copies of $q_1$ to the vertex $q_2^{a}$ in $R^{a}$, each consisting of a copy of $R$ (in $D^{a}_L$, $D^{a}$ or $D^{a}_R$) and vertices in the face-shadow containing $q_2^{a}$. As the only vertices in a face-shadow that are in a copy of $R$ are the copies of $q_2$, these three paths are vertex-disjoint. This implies that there is at least one path without $x$ or $y$ from a vertex that is not in $A$ to $q_2^{a}$ and therefore also to $a$, a contradiction.
	\end{itemize}

	In both cases we can now consider the graph $\widetilde{Q}_G:=\pi^{-1}(\widetilde{Q})$. Let $f$ be a face of $\widetilde{Q}_G$ that has a vertex of $A$ in its interior. We get a contradiction by proving that $f$, $x$ and $y$ satisfy the properties in Lemma~\ref{lem:polyhedral_cut}.
	
	\begin{enumerate}[(1)]
		\item Take for example the cube as $G'$.
		\item By definition of $\widetilde{Q}_G$.
		\item We proved this for each case.
		\item In case (b) the face is a cycle. In case (a) the face may contain only one distinct 2-point instead of two. However, $x$ and $y$ cannot be this 2-point as they are in a vertex-shadow and $O$ is not Dual. It follows that $x$ and $y$ each appear at most once in the facial walk of $f$.
	\end{enumerate}
\end{proof}

\section{Main theorems}\label{sec:main}

We can now put together the lemmas from the previous sections to prove that edge-preserving operations preserve 3-connectivity of simple embedded graphs.

\begin{theorem}\label{thm:main_1}
	Let $G$ be a simple 3-connected graph, and let $O$ be a $c3$-lopsp-operation different from Dual. If $O$ is edge-preserving or $G^*$ is simple 
	then $O(G)$ is 3-connected.
\end{theorem}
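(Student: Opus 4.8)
The plan is to argue by contradiction: assume $O(G)$ is not $3$-connected, extract a cut $X$ of exactly two vertices, and then show the preparatory lemmas force $O(G)\setminus X$ to be connected after all. First I would record that $|V_{O(G)}|\ge|V_G|\ge 4$ by Lemma~\ref{lem:size_OG}; this applies because a $3$-connected $G$ has at least four vertices, minimum degree at least three, and hence $|E_G|>|V_G|$, so it is not a tree. Since $O(G)$ is a connected embedded graph on at least four vertices that is not $3$-connected, it has a vertex-cut of size at most two; since $|V_{O(G)}|\ge 4$, such a cut can, if it has size one, be extended to a set $X=\{x,y\}$ of exactly two vertices whose removal still disconnects $O(G)$. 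So I fix $X=\{x,y\}$ with $O(G)\setminus X$ split into components $A_1,\dots,A_m$, $m\ge 2$.

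Next I would count the damage $X$ does to $G$. Let $k$ be the number of vertices of $G$ broken by $X$; by Corollary~\ref{cor:breaking_vertices} we have $k\le 2$. All hypotheses of Lemma~\ref{lem:2_broken} are in force ($G$ is simple with minimum degree at least three, $O\ne$ Dual is $c3$, and $O$ is edge-preserving or $G^*$ is simple), so $X$ breaks at most $2-k$ edges of $G$ both of whose endpoints are unbroken. Let $G'$ be the subgraph of $G$ obtained by deleting the $k$ broken vertices and then deleting the (at most $2-k$) broken edges between the surviving vertices. The crucial bookkeeping is then: deleting $k<3$ vertices from the $3$-connected graph $G$ leaves a $(3-k)$-connected graph, which is therefore $(3-k)$-edge-connected, and deleting at most $2-k<3-k$ further edges cannot disconnect it. Hence $G'$ is connected, and it is nonempty since $k\le 2<4\le|V_G|$.

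Then I would transfer this connectivity from $G'$ to $O(G)\setminus X$. For an unbroken vertex $v$, Definition~\ref{def:break_vertex} guarantees that $S_O(v)\setminus X$ is nonempty and lies in a single component of $O(G)\setminus X$, which I call $c(v)$. For an edge $\{v,w\}$ of $G'$ (so unbroken, with unbroken endpoints), Definition~\ref{def:edge_breaking} yields vertices $a\in S_O(v)\setminus X$ and $b\in S_O(w)\setminus X$ in a common component, forcing $c(v)=c(w)$. Thus $c$ is constant along the edges of $G'$, and since $G'$ is connected, $c$ takes a single value $A^{\ast}$ on all unbroken vertices. Finally, Lemma~\ref{lem:no_2_in_v0set} says every component $A_i$ contains a vertex lying in the vertex-shadow of some unbroken vertex $v$, whence $A_i=c(v)=A^{\ast}$. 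So all components coincide, contradicting $m\ge 2$, and $O(G)$ is $3$-connected.

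The substance of the theorem already lives in the earlier lemmas, so I expect the main obstacle to be organisational: checking that each invoked result's hypotheses genuinely hold (in particular that $3$-connectivity supplies the minimum-degree-three precondition of Corollary~\ref{cor:breaking_vertices} and Lemma~\ref{lem:2_broken}), cleanly reducing to a cut of size exactly two, and matching up the ``at most $2-k$ broken edges'' bound against the ``$(3-k)$-edge-connectivity'' of $G$ minus the broken vertices. The one genuinely conceptual point is recognising that the two failure modes of $3$-connectivity are complementary — local cuts are ruled out by Lemma~\ref{lem:no_2_in_v0set}, and broken structure is controlled by Corollary~\ref{cor:breaking_vertices} together with Lemma~\ref{lem:2_broken} — so that the single fact that $G'$ stays connected is exactly what is needed to close the argument.
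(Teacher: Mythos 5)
Your proposal is correct and follows essentially the same route as the paper's own proof: Lemma~\ref{lem:size_OG} to get at least four vertices and hence a $2$-cut, Corollary~\ref{cor:breaking_vertices} with Lemma~\ref{lem:2_broken} to bound the total broken vertices and edges by two so that the damaged graph $G'$ stays connected, and Lemma~\ref{lem:no_2_in_v0set} to anchor every component of $O(G)\setminus X$ in an unbroken vertex-shadow before transferring a path of $G'$ into $O(G)\setminus X$ for the contradiction. Your reformulation via the component-assignment $c(\cdot)$ and your explicit handling of extending a $1$-cut to a $2$-cut and of the $(3-k)$-edge-connectivity bookkeeping are only cosmetic refinements of the argument the paper gives more tersely.
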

\begin{proof}
	Assume that $G$ is 3-connected, that $O$ is edge-preserving or $G^*$ is simple and that $O(G)$ is not 3-connected. It follows from Lemma~\ref{lem:size_OG} that $O(G)$ has at least four vertices so $O(G)$ has a 2-cut $X$.
	
	By Lemma~\ref{lem:no_2_in_v0set} there exist two vertices $a$ and $b$ in different components of $O(G)\setminus X$ that are also in different vertex-shadows $S_O(v)$ and $S_O(w)$ such that $X$ does not break $v$ or $w$.
	
	Let $G'$ be the embedded graph obtained from $G$ by removing all the edges and vertices that are broken by $X$. By Corollary~\ref{cor:breaking_vertices} and Lemma~\ref{lem:2_broken} it follows that the sum of the removed vertices and removed edges not adjacent to those vertices is at most 2. As $G$ is 3-connected, this implies that $G'$ is connected. Let $P$ be a path in $G'$ from $v$ to $w$.
	
	It now follows from the definitions of breaking vertices and edges that $P$ induces a path from $a$ to $b$ in $O(G)\setminus X$. This is impossible as $a$ and $b$ were in different components. It follows that $O(G)$ is 3-connected.
\end{proof}

This theorem immediately implies that edge-preserving lopsp-operations always preserve 3-connectivity, but it does not prove that this is not true for edge-breaking operations. In the following theorem we give an example of a 3-connected embedded graph to which applying any edge-breaking lopsp-operation results in a graph that is not 3-connected.

\begin{theorem}\label{thm:edge-breaking_breaks}
	For every edge-breaking $c3$-operation $O$, there exist simple 3-connected embedded graphs $G$ of arbitrary genus $>0$ such that the (multi)graph $O(G)$ is not 3-connected.
\end{theorem}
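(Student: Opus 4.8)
The plan is to produce, for each genus $g \ge 1$, one graph $G_g$ that is defeated by every edge-breaking $O$ simultaneously, by exhibiting two faces whose associated vertices of $O(G_g)$ form a $2$-cut.

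For the construction I take $G_g$ to be the $(2g+2)$-gonal prism: two cycles $a_1\cdots a_{2g+2}$ and $b_1\cdots b_{2g+2}$ joined by rungs $e_i=\{a_i,b_i\}$. Rather than the planar embedding, I embed it on the orientable surface of genus $g$ by prescribing exactly four facial walks: the two cycles $f_3=a_1\cdots a_{2g+2}$ and $f_4=b_1\cdots b_{2g+2}$, together with two large faces $f_1,f_2$ whose boundaries run through \emph{all} rungs in the alternating pattern $a_1b_1b_2a_2a_3b_3b_4a_4\cdots$ and its complement. Because $2g+2$ is even these walks close up and every edge lies on exactly two of them, so this is a genuine cellular embedding; Euler's formula $(4g+4)-(6g+6)+4=2-2g$ confirms the genus is $g$. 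The graph is simple and $3$-connected, the rungs are the only edges between the two cycles (hence an edge-cut), and by design $f_1,f_2$ are precisely the two faces meeting every rung.

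Since $O$ is edge-breaking, $v_2$ is of type $0$, so each face of $G_g$ contributes a single $2$-point that is an honest vertex of $O(G_g)$; let $x,y$ be the vertices coming from $f_1,f_2$. I claim $X=\{x,y\}$ is a $2$-cut. Writing $V_A=\{a_i\}$ and $V_B=\{b_i\}$, the two halves of the argument are: (i) no internal cycle edge is broken by $X$, because such an edge is incident with $f_3$ or $f_4$, and by Lemma~\ref{lem:edge-path} it has an edge-path through that non-removed $2$-point, which lies in its own diamond and therefore avoids both $x$ and $y$; and (ii) every rung $e_i$ \emph{is} broken, because the two $2$-points of its diamond are exactly $x$ and $y$, so by part~$2$ of Lemma~\ref{lem:edge-path} every edge-path across $e_i$ meets $X$. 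Granting the claim, the vertex-shadows of $V_A$ and of $V_B$ can be linked in $O(G_g)\setminus X$ only through unbroken edges, i.e.\ edges internal to one cycle, which never cross the cut; so a shadow of $V_A$ and a shadow of $V_B$ lie in different components and $O(G_g)$ is not $3$-connected. That this is not a vacuous failure is checked separately: for $O$ other than Dual, Lemma~\ref{lem:size_OG} gives $|V_{O(G_g)}|\ge|V_{G_g}|=4g+4$, and for Dual one verifies directly that $G_g^{*}$ has the four vertices $f_1,\dots,f_4$ with $f_3,f_4$ non-adjacent, so $\{f_1,f_2\}$ is again a true $2$-cut.

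The step I expect to be the real work is upgrading (ii) from a statement about edge-paths \emph{confined to a single diamond} to one about arbitrary paths of $O(G_g)$: a path from a $V_A$-shadow to a $V_B$-shadow might try to creep along the boundaries of the fans of double chambers around $f_1$ and $f_2$, whose $A$-arcs and $B$-arcs alternate. I would control this with a separation curve on the surface. The edge-cut $\{e_i\}$ corresponds to a system of closed arcs that can be routed to meet the graph $O(G_g)$ only at $x$ and $y$: inside each rung-diamond the arc runs from the $f_1$-corner $x$ to the $f_2$-corner $y$ through the degenerate $1$-point, which is possible precisely because for an edge-breaking operation the $1$-point of a rung is joined by its unique type-$2$ edge to a $2$-point and hence offers no type-$2$ shortcut between the two endpoints of the rung. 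All these arcs meet only at $x$ and $y$, and together they separate the $A$-side of the surface (carrying $f_3$ and $V_A$) from the $B$-side (carrying $f_4$ and $V_B$); since the arcs avoid $O(G_g)$ off $\{x,y\}$, no edge of $O(G_g)\setminus X$ crosses, which is exactly the global separation needed. Verifying that this arc system is well-defined for every edge-breaking $O$ and genuinely separates is the crux; everything else is bookkeeping with Lemma~\ref{lem:edge-path} and Euler's formula.
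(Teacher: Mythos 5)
Your proposal is, at its core, the same argument as the paper's, and it is essentially sound. For $g=1$ your twisted $4$-prism \emph{is} the paper's example (the cube embedded on the torus, Figure~\ref{fig:cube_counter}), your cut $\{x,y\}$ is the paper's $\{f,g\}$, and your arc system $\Theta$ is exactly the paper's separating subgraph: because an edge-breaking operation has $v_1$ adjacent to $v_2$, the two edges of $B_G$ joining each rung's $1$-point to the two big face vertices survive as \emph{edges of $B_{O(G)}$}, and the union of these over all rungs is a subgraph of $B_{O(G)}$ with exactly two faces whose only type-$0$ vertices are $x$ and $y$, each face containing type-$0$ vertices in its interior. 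This observation settles the step you flag as the crux: well-definedness is automatic, since the arcs are not freely drawn curves but actual edges of $B_{O(G)}$; and separation is combinatorial, since a path of $O(G)$ is a type-$2$ path in $B_{O(G)}$, which can cross the subgraph only at one of its vertices, and the $1$-points on it are of type $1$ or $2$ (so not vertices of $O(G)$), while in the type-$1$ case their only type-$2$ edges lead straight back to $x$ and $y$. So the plan you describe does complete, and in exactly the way the paper does it.

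Where you genuinely differ is the handling of higher genus: the paper keeps its single genus-$1$ example and raises the genus by adding vertices and edges inside the two quadrilateral faces (which leaves $f$, $g$ and the separating subgraph untouched), whereas you construct a fresh $(2g+2)$-prism with a twisted rotation system for each $g$. Your family is valid and arguably more uniform, but note that your justification of the embedding is incomplete as stated: covering every oriented edge exactly once is not by itself sufficient for a facial-walk system to come from a rotation system; here it works because the prism is cubic and the walks have no immediate reversals, so the induced local permutation at each vertex is a fixed-point-free permutation of three elements, hence a $3$-cycle. Two further minor points. Your steps (i) and (ii) about broken and unbroken edges are dispensable once the global separation is in place, and (i) slightly misuses Lemma~\ref{lem:edge-path}: that lemma does not let you route an edge-path through a $2$-point of your choosing; what does the job is the copy of the shadow-connecting walk lying in the double chamber whose $2$-point is $f_3$ (resp.\ $f_4$), which avoids $x$ and $y$ simply because it never leaves that double chamber. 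Finally, the appeal to Lemma~\ref{lem:size_OG} is unnecessary: once $O(G)\setminus\{x,y\}$ is disconnected with vertices on both sides (the vertex-shadows of the $a_i$ and $b_i$ are nonempty and disjoint from $\{x,y\}$ by Lemma~\ref{lem:dual_char}, since $O$ is neither Dual nor the identity), $\{x,y\}$ is a vertex-cut of size two and $O(G)$ is not $3$-connected by definition, regardless of its number of vertices.
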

\begin{proof}
	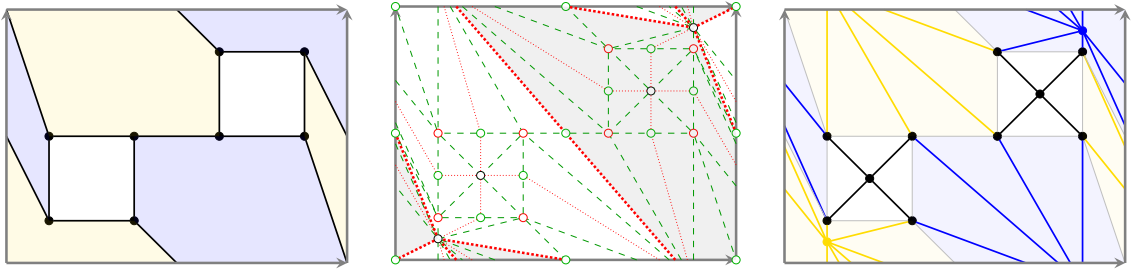
\begin{figure}			
		\centering
		\scalebox{0.8}{\begin{tikzpicture}[scale=0.7]
\tikzset{normal/.style={shape=circle, draw=black, scale=0.4,fill=black}}

\node[normal] (v1) at (-3,0) {};
\node[normal] (v2) at (-1,0) {};
\node[normal] (v3) at (-3,-2) {};
\node[normal] (v4) at (-1,-2) {};

\node[normal] (v1') at (3,0) {};
\node[normal] (v2') at (1,0) {};
\node[normal] (v3') at (3,2) {};
\node[normal] (v4') at (1,2) {};

\begin{scope}[blue,opacity = 0.1]
\fill (3,0) -- (4,-3) -- (0,-3) -- (-1,-2) -- (-1,0) -- cycle;
\fill (-4,3) -- (-3,0) -- (-3,-2) -- (-4,0) -- cycle;
\fill (0,3) -- (4,3) -- (4,0) -- (3,2) -- (1,2) -- cycle;
\end{scope}

\begin{scope}[yellow!90!red,opacity = 0.1]
\fill (-4,3) -- (0,3) -- (1,2) -- (1,0) -- (-3,0) -- cycle;
\fill (-4,0) -- (-3,-2) -- (-1,-2) -- (0,-3) -- (-4,-3) -- cycle;
\fill (3,2) -- (4,0) -- (4,-3) -- (3, 0) -- cycle;
\end{scope}

\begin{scope}[thick]
\draw (v1) -- (v2) -- (v4) -- (v3) -- (v1) ;
\draw (v1') -- (v2') -- (v4') -- (v3') -- (v1') ;
\end{scope}

\begin{scope}[thick]
\draw (v2) -- (v2');
\draw (v4) -- (0,-3);
\draw (0,3) -- (v4');
\draw (v1) -- (-4,3);
\draw (4,-3) -- (v1');
\draw (v3) -- (-4,0);
\draw (4,0) -- (v3');
\end{scope}

\begin{scope}[-{stealth[scale=2]},gray, very thick]
\draw (-4, -3) -- (-4,3) {};
\draw (4, -3) -- (4,3) {};
\draw (-4, -3) -- (4,-3) {};
\draw (-4, 3) -- (4,3) {};
\end{scope}
\end{tikzpicture}} \quad
		\scalebox{0.8}{\begin{tikzpicture}[scale=0.7]
\tikzset{normal/.style={shape=circle, draw=black, scale=0.4, fill=white}}
\tikzset{type0/.style={shape=circle, draw=red, scale=0.4, fill=white}}
\tikzset{type1/.style={shape=circle, draw=black!30!green, scale=0.4, fill=white}}
\tikzset{type2/.style={shape=circle, draw=black, scale=0.4, fill=white}}
\tikzset{0edge/.style={draw=red, densely dotted}}
\tikzset{1edge/.style={draw=black!40!green, dashed}}
\tikzset{2edge/.style={}}
\tikzset{type12/.style={shape=circle, draw=black!30!green, scale=0.4, fill=white}}

\begin{scope}[-{stealth[scale=2]},gray,very  thick]
\draw (-4, -3) -- (-4,3) {};
\draw (4, -3) -- (4,3) {};
\draw (-4, -3) -- (4,-3) {};
\draw (-4, 3) -- (4,3) {};
\end{scope}

\node[type0] (v1) at (-3,0) {};
\node[type1] (e1) at (-2,0) {};
\node[type0] (v2) at (-1,0) {};
\node[type1] (e2) at (-1,-1) {};
\node[type0] (v3) at (-3,-2) {};
\node[type1] (e3) at (-2,-2) {};
\node[type0] (v4) at (-1,-2) {};
\node[type1] (e4) at (-3,-1) {};
\node[type2] (v5) at (-2,-1) {};

\node[type0] (v1') at (3,0) {};
\node[type1] (e1') at (2,0) {};
\node[type0] (v2') at (1,0) {};
\node[type1] (e2') at (1,1) {};
\node[type0] (v3') at (3,2) {};
\node[type1] (e3') at (2,2) {};
\node[type0] (v4') at (1,2) {};
\node[type1] (e4') at (3,1) {};
\node[type2] (v5') at (2,1) {};

\node[type12] (E) at (0,0) {};
\node[type12] (E1) at (0,-3) {};
\node[type12] (E1') at (0,3) {};
\node[type12] (E2) at (-4,0) {};
\node[type12] (E2') at (4,0) {};
\node[type12] (E3) at (-4,3) {};
\node[type12] (E3') at (4,-3) {};
\node[type12] (E3'') at (-4,-3) {};
\node[type12] (E3''') at (4,3) {};

\node[normal] (f) at (-3,-2.5) {};
\node[normal] (g) at (3,2.5) {};


\begin{scope}[gray, opacity = 0.12]
\fill (4,-3) -- ({18/7},-3) -- (0,0)-- ({-18/7},3) -- (0,3) -- (3,2.5) -- (4,0)-- cycle;
\fill (3,2.5) -- (4,3) --  ({18/7},3) -- cycle;
\fill (-3,-2.5) -- (-4,-3) --  (-4,0) -- cycle;
\fill (0,-3) -- (-3,-2.5) --  ({-18/7},-3) -- cycle;
\end{scope}

\begin{scope}[1edge]
\draw (v3') -- (v5') -- (v4');
\draw (v1') -- (v5')--(v2');
\draw[0edge] (e1') -- (v5') -- (e2');
\draw[0edge] (e4') -- (v5') -- (e3');
\draw (v3) -- (v5) -- (v4);
\draw (v1) -- (v5);
\draw[0edge] (e1) -- (v5) -- (e2);
\draw[0edge] (e4) -- (v5) -- (e3);
\draw[2edge] (e1) -- (v1) -- (e4)--(v3) -- (e3) -- (v4) -- (e2) -- (v2) -- (e1);
\draw (v2) -- (v5);
\draw[2edge]  (e1') -- (v1') -- (e4')--(v3') -- (e3') -- (v4') -- (e2') -- (v2') -- (e1');

\draw[2edge] (v2) -- (E) -- (v2');
\draw[2edge] (v4') -- (E1');
\draw[2edge] (v4) -- (E1);
\draw[2edge] (v3) -- (E2);
\draw[2edge] (v3') -- (E2');
\draw[2edge] (v1) -- (E3);
\draw[2edge] (v1') -- (E3');

\draw[0edge] (e3) -- (f) ;

\draw[0edge] (e3') -- (g) ;

\draw[0edge] (-4,-0.75) -- (f);
\draw[0edge] (4,-0.75) -- (e4');
\draw[0edge] (-{20/7},-3) -- (f);
\draw[0edge] ({-20/7},3) -- (e1);
\draw[0edge] (-2.2,-3) -- (f);
\draw[0edge] (-2.2,3) -- (e2');

\draw[0edge] (4,0.75) -- (g);
\draw[0edge] (-4,0.75) -- (e4);
\draw[0edge] ({20/7},3) -- (g);
\draw[0edge] ({20/7},-3) -- (e1');
\draw[0edge] (2.2,3) -- (g);
\draw[0edge] (2.2,-3) -- (e2);

\draw (v4) -- (f) -- (v3);
\draw (-4, -0.25) -- (f) -- (-4,-1.25);
\draw (4, -0.25) -- (v3');
\draw (4, -1.25) -- (v1');
\draw (-3,-3) -- (f) -- (-{19/7},-3);
\draw (-3,3) -- (v1);
\draw (-{19/7},3) -- (v2);
\draw (-{5/3},-3) -- (f) -- (-{17/7},-3);
\draw (-{5/3},3) -- (v4');
\draw (-{17/7},3) -- (v2');

\draw (v4') -- (g) -- (v3');
\draw (4, 0.25) -- (g) -- (4,1.25);
\draw (-4, 0.25) -- (v3);
\draw (-4, 1.25) -- (v1);
\draw (3,3) -- (g) -- ({19/7},3);
\draw (3,-3) -- (v1');
\draw ({19/7},-3) -- (v2');
\draw ({5/3},3) -- (g) -- ({17/7},3);
\draw ({5/3},-3) -- (v4);
\draw ({17/7},-3) -- (v2);

\end{scope}

\begin{scope}[0edge, very thick]
\draw ({18/7},3) -- (g);
\draw ({18/7},-3) -- (E);
\draw (-{18/7},-3) -- (f);
\draw ({-18/7},3) -- (E);

\draw (f) -- (E2);
\draw (E1) -- (f) -- (E3'');
\draw (E1') -- (g) -- (E3''');
\draw (g) -- (E2');
\end{scope}

\end{tikzpicture}} \quad
		\scalebox{0.8}{\begin{tikzpicture}[scale=0.7]
\tikzset{normal/.style={shape=circle, draw=black, scale=0.4,fill=black}}
\tikzset{greenface/.style={shape=circle, draw=yellow!90!red, scale=0.4,fill=yellow!90!red}}
\tikzset{redface/.style={shape=circle, draw=blue, scale=0.4,fill=blue}}

\node[normal] (v1) at (-3,0) {};
\node[normal] (v2) at (-1,0) {};
\node[normal] (v3) at (-3,-2) {};
\node[normal] (v4) at (-1,-2) {};
\node[normal] (v5) at (-2,-1) {};

\node[normal] (v1') at (3,0) {};
\node[normal] (v2') at (1,0) {};
\node[normal] (v3') at (3,2) {};
\node[normal] (v4') at (1,2) {};
\node[normal] (v5') at (2,1) {};



\node[greenface] (f) at (-3,-2.5) {};
\node[redface] (g) at (3,2.5) {};

\begin{scope}[blue,opacity = 0.05]
\fill (3,0) -- (4,-3) -- (0,-3) -- (-1,-2) -- (-1,0) -- cycle;
\fill (-4,3) -- (-3,0) -- (-3,-2) -- (-4,0) -- cycle;
\fill (0,3) -- (4,3) -- (4,0) -- (3,2) -- (1,2) -- cycle;
\end{scope}

\begin{scope}[yellow!90!red,opacity = 0.05]
\fill (-4,3) -- (0,3) -- (1,2) -- (1,0) -- (-3,0) -- cycle;
\fill (-4,0) -- (-3,-2) -- (-1,-2) -- (0,-3) -- (-4,-3) -- cycle;
\fill (3,2) -- (4,0) -- (4,-3) -- (3, 0) -- cycle;
\end{scope}

\begin{scope}[thick]
\draw (v3') -- (v5') -- (v4');
\draw (v3) -- (v5) -- (v4);
\draw (v2) -- (v5) -- (v1);
\draw (v2') -- (v5') -- (v1');
\end{scope}

\begin{scope}[thick,yellow!90!red]
\draw (v4) -- (f) -- (v3);
\draw (-4, -0.25) -- (f) -- (-4,-1.25);
\draw (4, -0.25) -- (v3');
\draw (4, -1.25) -- (v1');
\draw (-3,-3) -- (f) -- (-{19/7},-3);
\draw (-3,3) -- (v1);
\draw (-{19/7},3) -- (v2);
\draw (-{5/3},-3) -- (f) -- (-{17/7},-3);
\draw (-{5/3},3) -- (v4');
\draw (-{17/7},3) -- (v2');
\end{scope}

\begin{scope}[thick,blue]
\draw (v4') -- (g) -- (v3');
\draw (4, 0.25) -- (g) -- (4,1.25);
\draw (-4, 0.25) -- (v3);
\draw (-4, 1.25) -- (v1);
\draw (3,3) -- (g) -- ({19/7},3);
\draw (3,-3) -- (v1');
\draw ({19/7},-3) -- (v2');
\draw ({5/3},3) -- (g) -- ({17/7},3);
\draw ({5/3},-3) -- (v4);
\draw ({17/7},-3) -- (v2);
\end{scope}

\begin{scope}[lightgray]
\draw (v1) -- (v2) -- (v4) -- (v3) -- (v1) ;
\draw (v1') -- (v2') -- (v4') -- (v3') -- (v1');
\end{scope}

\begin{scope}[lightgray]
\draw (v2) -- (v2');
\draw (v4) -- (0,-3);
\draw (0,3) -- (v4');
\draw (v1) -- (-4,3);
\draw (4,-3) -- (v1');
\draw (v3) -- (-4,0);
\draw (4,0) -- (v3');
\end{scope}

\begin{scope}[-{stealth[scale=2]},gray,very  thick]
\draw (-4, -3) -- (-4,3) {};
\draw (4, -3) -- (4,3) {};
\draw (-4, -3) -- (4,-3) {};
\draw (-4, 3) -- (4,3) {};
\end{scope}
\end{tikzpicture}}
		\caption{The figure on the left-hand side shows a 3-connected graph embedded on the torus. The faces that correspond to the 2-cut are colored blue and yellow. The middle figure shows the barycentric subdivision of the graph, with a subgraph that has two simple faces as explained in the proof of Theorem~\ref{thm:edge-breaking_breaks}. One of them is colored gray. The last figure shows the (edge-breaking) operation Join applied to the graph. The blue and the yellow vertex form a 2-cut, their incident edges are also drawn in blue and yellow. }
		\label{fig:cube_counter}
	\end{figure}
	Let $G$ be the graph on the left-hand side of Figure \ref{fig:cube_counter}. It is a 3-connected embedding of genus 1. Let $O$ be any edge-breaking operation. Then $O(G)$ will have a vertex $f$ corresponding to the blue face and a vertex $g$ corresponding to the yellow face. 
	In the second drawing of Figure~\ref{fig:cube_counter} some 0-edges of the barycentric subdivision of $G$ are highlighted. By the definition of an edge-breaking operation these edges appear in $B_{O(G)}$, although they may have a different type. The highlighted edges form a subgraph of $B_{O(G)}$ that has two simple faces and only two vertices -- $f$ and $g$ -- of type 0. As there is at least one vertex of type 0 in the interior of each face, this implies that $f$ and $g$ form a 2-cut of $O(G)$. An infinite number of examples of arbitrary genus $> 0$ can be constructed from $G$ by adding more vertices and edges in the interior of the faces of $G$ of length four (the white faces in Figure~\ref{fig:cube_counter}). As this does not change the faces $f$ or $g$ the arguments still hold.
\end{proof}

\begin{theorem}\label{thm:main_2}
	Given a $c3$-lopsp-operation $O$, the embedded (multi)graph $O(G)$ is 3-connected for any simple 3-connected embedded graph $G$ if and only if $O$ is edge-preserving.
\end{theorem}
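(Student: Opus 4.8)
The plan is to split the biconditional into its two implications, both of which have essentially been prepared by the two theorems immediately preceding this statement; the work of the present proof is only to assemble them correctly. For the direction ``$O$ edge-preserving $\Rightarrow$ $O(G)$ always 3-connected,'' I would invoke Theorem~\ref{thm:main_1}. That theorem requires $O$ to be a $c3$-lopsp-operation \emph{different from Dual}, so the first thing to check is that an edge-preserving operation automatically satisfies this side condition. This follows by noting that Dual is itself edge-breaking: its special vertices $v_0,v_1,v_2$ have types $2,1,0$, and $v_1$ is joined to $v_2$ by the type-2 edge (Figure~\ref{fig:dual}), so the defining condition of Definition~\ref{def:edge_breaking} (namely $v_1$ adjacent to $v_2$ with $v_2$ of type $0$) is met. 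Hence any edge-preserving $O$ is different from Dual, and Theorem~\ref{thm:main_1} applies to \emph{every} simple 3-connected $G$ under the ``edge-preserving'' branch of its hypothesis, yielding that $O(G)$ is 3-connected. Observe that on this branch the alternative hypothesis that $G^*$ be simple is never needed, which is exactly why the quantifier over all $G$ is legitimate.

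For the converse direction I would argue by contraposition. Suppose $O$ is a $c3$-lopsp-operation that is \emph{not} edge-preserving; by Definition~\ref{def:edge_breaking} this means $O$ is edge-breaking. Then Theorem~\ref{thm:edge-breaking_breaks} produces an explicit simple 3-connected embedded graph $G$ (the toroidal graph of Figure~\ref{fig:cube_counter}, and in fact infinitely many of arbitrary positive genus) for which $O(G)$ is not 3-connected. Consequently the property ``$O(G)$ is 3-connected for every simple 3-connected embedded graph $G$'' fails for this $O$. This is precisely the contrapositive of the implication ``[that property] $\Rightarrow$ $O$ edge-preserving,'' completing the biconditional.

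I expect there to be no genuine obstacle in this final assembly, since all the analytical difficulty has already been discharged: Theorem~\ref{thm:main_1} rules out local cuts (Lemma~\ref{lem:diff_v0sets}, Lemma~\ref{lem:no_2_in_v0set}) and shows that two vertices of $O(G)$ cannot over-break the vertices and edges of $G$ (Corollary~\ref{cor:breaking_vertices}, Lemma~\ref{lem:2_broken}), while Theorem~\ref{thm:edge-breaking_breaks} supplies the witnessing construction. The one point that deserves explicit mention, and the only place where the two halves of the argument interact, is the classification of Dual as edge-breaking; without it, the clause ``different from Dual'' in Theorem~\ref{thm:main_1} could leave a gap in the forward implication, and one might worry whether the case $O=\mathrm{Dual}$ is covered. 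Since Dual is edge-breaking, it falls under the converse direction (and indeed Theorem~\ref{thm:edge-breaking_breaks} applies to it), so the two implications together cover all $c3$-lopsp-operations with no omission.
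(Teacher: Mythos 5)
Your proposal is correct and follows exactly the paper's own route: the paper proves Theorem~\ref{thm:main_2} by combining Theorem~\ref{thm:main_1} (for the edge-preserving direction) with Theorem~\ref{thm:edge-breaking_breaks} (for the converse, via the edge-breaking witness). Your explicit check that Dual is edge-breaking---so that the ``different from Dual'' hypothesis of Theorem~\ref{thm:main_1} is automatically satisfied for edge-preserving operations---is a detail the paper leaves implicit, and it is a correct and worthwhile observation rather than a deviation.
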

\begin{proof}
	This follows from Theorem~\ref{thm:main_1} and Theorem~\ref{thm:edge-breaking_breaks}.
\end{proof}

Theorem~\ref{thm:main_2} characterises exactly which lopsp-operations may break 3-connectivity. In some situations however, one may only be interested in simple graphs, and in Theorem~\ref{thm:main_2} the result of applying an operation can be a multigraph. Theorem~\ref{thm:simple} proves that edge-breaking operations of type 1 preserve 3-connectivity under the extra condition that $O(G)$ is simple, and that this is not true for edge-breaking operations of type 2. To prove the second statement Corollary~\ref{cor:simple_result} is used. 

\begin{corollary}\label{cor:simple_result}
Let $O$ be a $c3$-lopsp-operation and let $G$ be a simple 3-connected embedded graph such that $B_G$ is simple. If $O$ is not edge-breaking of type 1, then $O(G)$ is simple.
\end{corollary}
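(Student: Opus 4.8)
The plan is to read simplicity of $O(G)$ off the barycentric subdivision $B_{O(G)}$. Recall that the edges of $O(G)$ are exactly the type-$1$ vertices of $B_{O(G)}$, each joined by a type-$2$ edge to each of its two endpoints (which are type-$0$ vertices). Hence a loop of $O(G)$ is a type-$1$ vertex both of whose type-$2$ edges end at the same type-$0$ vertex, i.e.\ a $2$-cycle in $B_{O(G)}$, and a pair of parallel edges is a pair of distinct type-$1$ vertices $e_1,e_2$ sharing the same two type-$0$ neighbours $a,b$, i.e.\ a $4$-cycle $a\,e_1\,b\,e_2$ all of whose edges are of type $2$. Since such a $4$-cycle has boundary types $0,1,0,1$, it has no chord (there are no edges between two type-$0$ or two type-$1$ vertices) and it cannot enclose a single type-$1$ vertex together with its four incident edges (two of those neighbours are of type $2$, but the boundary contains no type-$2$ vertex); so it is automatically non-trivial. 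It therefore suffices to rule out $2$-cycles and these parallel-edge $4$-cycles in $B_{O(G)}$.

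First I would dispose of the degenerate operations. If $O$ is the identity then $O(G)=G$ is simple by hypothesis, and $O=\mathit{Dual}$ is impossible, since by its description (Figure~\ref{fig:dual}) Dual has $v_1$ of type $1$ adjacent to $v_2$ of type $0$, so Dual is edge-breaking of type $1$, which the hypothesis forbids. For every remaining $O$ I may invoke Lemma~\ref{lem:4cycle_positions}, whose hypotheses hold because $G$ is simple and $3$-connected and $B_G$ is simple (in particular $B_G$ has no multiple edges). That lemma immediately gives that $B_{O(G)}$ has no $2$-cycle, so $O(G)$ has no loop. For a parallel-edge $4$-cycle $c=a\,e_1\,b\,e_2$ the same lemma forces two non-adjacent vertices of $c$ to be $2$-points; as a $2$-point has type $t(v_2)\neq 1$, the $2$-points must be the type-$0$ vertices $a,b$, whence $t(v_2)=0$, and moreover $e_1$ and $e_2$ lie on different $2$-sides.

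It then remains to derive a contradiction from such a configuration, and this is the crux. Because $e_1$ is adjacent to both $a$ and $b$, and these are exactly the two $2$-points of the diamond whose shared $2$-side carries $e_1$, the single type-$1$ vertex $e_1$ is adjacent to the image of $v_2$ in each of the two copies of $O_P$ that make up that diamond. Writing the shared $2$-side as the two copies of $P_{v_0,v_1}$ meeting at the $1$-point $v_1$, every shared vertex other than $v_1$ occurs as two distinct copies $z_L,z_R$, and the edge joining such a $z$ to $v_2$ in $O$ is inherited by only one of those copies in each double chamber; only the central $1$-point $v_1$, which is shared as a single vertex, can meet the image of $v_2$ in both double chambers. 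Hence $\pi(e_1)=v_1$, so $v_1$ is a type-$1$ vertex adjacent to $v_2$ with $t(v_2)=0$, and by Definition~\ref{def:edge_breaking} this makes $O$ edge-breaking of type $1$, contradicting the hypothesis; therefore $O(G)$ has no multiple edges and is simple. The main obstacle is precisely this last local computation: one must track through the cut-and-glue construction of the diamond which vertices of the shared $2$-side get identified and which of their edges to the two images of $v_2$ actually persist, in order to conclude that $v_1$ is the only vertex that can be adjacent to both $2$-points.
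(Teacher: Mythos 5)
Your overall skeleton matches the paper's proof almost exactly: read simplicity off $B_{O(G)}$ as the absence of type-2 cycles of length two and four, dispose of the identity and of Dual, check that such cycles can never be trivial, and invoke Lemma~\ref{lem:4cycle_positions} to conclude that the two type-0 vertices of a parallel-edge 4-cycle are 2-points (so $t(v_2)=0$) while the two type-1 vertices lie on different 2-sides, with the 1-point case giving ``edge-breaking of type 1'' directly. The divergence, and the gap, is in your final ``local computation''. You claim that since every vertex $z\neq v_1$ of the shared 2-side occurs as two distinct copies $z_L,z_R$, ``the edge joining such a $z$ to $v_2$ in $O$ is inherited by only one of those copies in each double chamber'', hence only $v_1$ can be adjacent to the images of $v_2$ in both double chambers. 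This bookkeeping is valid only when $O$ has at most one edge between $z$ and $v_2$. But a lopsp-operation is by Definition~\ref{def:lopsp} a plane \emph{multigraph}, and nothing in your argument excludes two parallel edges between $z$ and $v_2$, one attached on each side of the cut-path $P$ at $z$ (this is combinatorially possible, e.g.\ when the two $P$-edges at $z$ are its two type-0 edges, so that its two type-2 edges lie on opposite sides of $P$). In that situation both $\{z_L,v_2\}$ and $\{z_R,v_2\}$ are edges of $O_P$, and then the copy of $z$ on \emph{each} of the two shared copies of $P_{v_0,v_1}$ in the diamond is adjacent to \emph{both} 2-points, exactly the configuration you declare impossible; your conclusion $\pi(e_1)=v_1$ does not follow. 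The claim you need is true under the hypotheses of the corollary, but it cannot be established by cut-and-glue bookkeeping alone: it is false for general (non-$c3$) lopsp-operations, so any correct proof must use the $c3$ hypothesis at this point, and yours never does.

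This is precisely where the paper argues differently. Instead of trying to show that a non-1-point on the 2-side cannot be adjacent to both 2-points, it observes that \emph{if} it is, then by the symmetry of the $P$-diamond (the automorphism exchanging the two copies of $O_P$, which swaps the two shared copies of $P_{v_0,v_1}$ and the two 2-points) the other copy of $\pi(e_1)$ is also adjacent to both 2-points, so the two copies of $\pi(e_1)$ together with the two 2-points form a type-2 4-cycle inside the $P$-diamond. Since a type-2 cycle can never be trivial, Lemma~\ref{lem:c3char} then contradicts $O$ being $c3$. This single step simultaneously covers the straddling-parallel-edge case your bookkeeping misses and makes the bookkeeping unnecessary. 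Your proof becomes correct if, at the moment a copy of some $z\neq v_1$ is found adjacent to both 2-points, you replace the ``inherited by only one copy'' claim with this symmetry argument (or, equivalently, first rule out edges from $z$ to $v_2$ on both sides of $P$ by the same appeal to Lemma~\ref{lem:c3char}).
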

\begin{proof}
	We can assume that $O$ is not the identity operation or Dual: For the identity operation the statement is obvious, and Dual is an edge-breaking operation of type 1.
	
	Assume that $O(G)$ is not simple. Then there is a type-2 cycle $c$ in $B_{O(G)}$ of length two or length four. A type-2 cycle of length two or four cannot be trivial, so $c$ is non-trivial. As $B_G$ is simple, it follows from Lemma~\ref{lem:4cycle_positions} that two non-adjacent vertices of $c$ are 2-points and the other 2 vertices are on different 2-sides. If the vertices of $c$ on the 2-sides are 1-points, then $O$ is an edge-breaking operation of type 1. If they are not then by symmetry it follows that there is a type-2 cycle of length four in the diamond corresponding to the chosen cut-path. Since a type-2 cycle cannot be trivial this is a contradiction with Lemma~\ref{lem:c3char}.	
\end{proof}

\begin{theorem}\label{thm:simple}
	Let $O$ be an edge-breaking $c3$-lopsp-operation different from Dual. 
	\begin{itemize}
		\item \textbf{If $O$ is of type 1:} For any simple 3-connected embedded graph $G$: If $O(G)$ is simple then $O(G)$ is 3-connected.
		\item \textbf{If $O$ is of type 2:} There exist simple 3-connected embedded graphs $G$ such that $O(G)$ is simple and not 3-connected. 
	\end{itemize}
\end{theorem}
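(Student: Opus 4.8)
The plan is to dispatch both cases with machinery already assembled: the type-1 statement should reduce to Theorem~\ref{thm:main_1}, while the type-2 statement should follow by combining the counterexample of Theorem~\ref{thm:edge-breaking_breaks} with Corollary~\ref{cor:simple_result}.

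For the type-1 case, the first step is to use the pairing recorded in the remark following Definition~\ref{def:edge_breaking}. Since $O$ is edge-breaking of type 1 and different from Dual, there is a companion type-2 operation $O_2$ with $O(G) = O_2(G) \cup G^*$ for every embedded graph $G$. I would then observe that this exhibits $G^*$ as a subgraph of $O(G)$ with loops and multiplicities intact: the vertices of $G^*$ are the faces of $G$, which occur as the $2$-points of $B_{O(G)}$ (of type $0$ because $t(v_2)=0$), and the edges of $G^*$ are realised by the degree-two $1$-points, each of which joins exactly the two face-vertices bordering the corresponding edge of $G$. Because distinct edges of $G$ give distinct $1$-points, a multiple edge of $G^*$ would force a multiple edge of $O(G)$ and a loop of $G^*$ would force a loop of $O(G)$; hence if $O(G)$ is simple, so is $G^*$. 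With $G$ simple and $3$-connected, $O$ a $c3$-operation different from Dual, and $G^*$ now known to be simple, Theorem~\ref{thm:main_1} applies verbatim and yields that $O(G)$ is $3$-connected.

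For the type-2 case I would recycle the graph $G$ produced in Theorem~\ref{thm:edge-breaking_breaks} (Figure~\ref{fig:cube_counter}). That theorem already supplies a simple $3$-connected embedded graph for which $O(G)$ is not $3$-connected, so the only thing missing is simplicity of $O(G)$. Here Corollary~\ref{cor:simple_result} does the work: as $O$ is of type 2 it is in particular not edge-breaking of type 1, so the corollary yields that $O(G)$ is simple as soon as $B_G$ is simple. I would therefore check that the embedding in Figure~\ref{fig:cube_counter} has simple barycentric subdivision, i.e.\ that each of its facial walks --- the quadrilateral faces and the two cut-faces $f$ and $g$ alike --- is a genuine cycle with no repeated vertex or edge. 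Granting this, $O(G)$ is simple by Corollary~\ref{cor:simple_result} and not $3$-connected by Theorem~\ref{thm:edge-breaking_breaks}, which is precisely the assertion.

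I expect the main obstacle to be exactly this last verification that $B_G$ is simple. The cut forces $G^*$ to be non-simple, so one must be sure the non-simplicity is of the harmless kind --- a multiple edge between two faces meeting along two vertex-disjoint edges, which leaves every facial walk a cycle and hence $B_G$ simple --- rather than a face incident to an edge on both sides, which would repeat that edge in a facial walk and destroy simplicity of $B_G$. If the specific embedding of Figure~\ref{fig:cube_counter} were to contain such a self-adjacent face, the remedy is to pass to a larger member of the same family, subdividing the length-four faces so as to turn every facial walk into a cycle while leaving $f$, $g$ and the cut untouched.
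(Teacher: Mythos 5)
Your proposal is correct and follows essentially the same route as the paper: for type 1 it uses that $G^*$ is a subgraph of $O(G)$ (so simplicity of $O(G)$ forces simplicity of $G^*$) and then invokes Theorem~\ref{thm:main_1}, and for type 2 it combines the counterexample of Theorem~\ref{thm:edge-breaking_breaks} with Corollary~\ref{cor:simple_result}. Your extra care about why $B_G$ is simple for the graph of Figure~\ref{fig:cube_counter} is a point the paper passes over by assertion, and your observation that the non-simplicity of $G^*$ there comes only from faces sharing several edges (which leaves every facial walk a cycle, hence $B_G$ simple) is exactly the right justification.
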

\begin{proof}
	\begin{itemize}
		\item \textbf{$O$ is of type 1:} By the definition of an edge-breaking operation of type 1, the dual $G^*$ is a subgraph of $O(G)$. It follows that if $O(G)$ is simple, then $G^*$ is also simple. Now Theorem~\ref{thm:main_1} implies that $O(G)$ is 3-connected.
		
		\item \textbf{$O$ is of type 2:} By Corollary~\ref{cor:simple_result} it suffices to prove that there are simple 3-connected embedded graphs $G$ such that $B_G$ is simple and $O(G)$ is not 3-connected for any edge-breaking $O$. The graph on the left-hand side in Figure~\ref{fig:cube_counter} is such a graph. 
	\end{itemize}
\end{proof}

\begin{table}
	\centering
	\begin{tabular}{|c|cccc|}
		\hline 
		&  \multicolumn{3}{c|}{Edge-breaking} & Edge-  \\ 
		\cline{2-4}
		& Dual & Type 2 & \multicolumn{1}{c|}{Type 1$'$} & preserving \\ 
		\hline 
		$G$ plane& \textcolor{black!30!green}{Yes} & \textcolor{black!30!green}{Yes} &\textcolor{black!30!green}{Yes} & \textcolor{black!30!green}{Yes} \\ 
		$G$ polyhedral& \textcolor{black!30!green}{Yes} & \textcolor{black!30!green}{Yes} &\textcolor{black!30!green}{Yes} & \textcolor{black!30!green}{Yes}\\ 
		$G^*$ simple& \textcolor{red}{No} & \textcolor{black!30!green}{Yes} & \textcolor{black!30!green}{Yes} & \textcolor{black!30!green}{Yes} \\ 
		$O(G)$ simple& \textcolor{red}{No} & \textcolor{red}{No} &\textcolor{black!30!green}{Yes} & \textcolor{black!30!green}{Yes} \\ 
		General& \textcolor{red}{No} & \textcolor{red}{No} &  \textcolor{red}{No}&  \textcolor{black!30!green}{Yes}\\
		\hline 
	\end{tabular} 
	\caption{In column $A$ and row $B$ this table answers the question `If $O$ is a $c3$-lopsp-operation of class $A$, is $O(G)$ 3-connected for every simple 3-connected graph $G$ that satisfies condition $B$?'.}
	\label{tab:summary}
\end{table}

In Table~\ref{tab:summary} the effect of different types of lopsp-operations on 3-connectivity is summarised. 
Every column represents a different class of operations, and every row represents a different condition on the embedded graph $G$. Note that a `No' does not mean that the result of such an operation cannot be 3-connected. It simply means that for every lopsp-operation of the given class, there exist 3-connected graphs satisfying the condition such that the result of applying the operation is not 3-connected. In the table, Dual is separated from the other operations of type 1. We say that these other operations are of type $1'$. This way, every lopsp-operation is in exactly one of the given classes. For the conditions on the graphs this is not the case. An embedded graph can satisfy more than one of the conditions.

The first row shows that for all plane embedded graphs, all lopsp-operations preserve 3-connectivity. This was proved for lsp-operations in \cite{brinkmann2017comparing}. In \cite{brinkmann2021local} it was proved for lopsp-operations in a more general context, also including the results in the second row. The results for Dual can be found in \cite{bokal2022connectivity} and \cite{mohar1997face}. All other cells in the table show new results. The last row and column follow from Theorem~\ref{thm:main_2}. In the last row there are no restrictions on the graphs, except that they must be 3-connected. Edge-preserving operations are the only operations that always preserve 3-connectivity, so they are the only class with a `Yes' in that row, and consequently also all other rows. If we demand that the result of the operation is simple, then edge-breaking operations of type $1'$ also preserve 3-connectivity, but operations of type 2 and Dual do not. This follows from Theorem~\ref{thm:simple}. For graphs with a simple dual, it is even the case that Dual is the only operation that may destroy 3-connectivity, which follows from Theorem~\ref{thm:main_1}.

\section{Conclusion and extensions}
As the general approach to lopsp-operations was described as recently as in 2017
\cite{brinkmann2017comparing}, all earlier research on symmetry-preserving operations focused on specific
operations, and there were no general results about the whole class.
In this text we discussed the question `Which lopsp-operations preserve 3-connectivity in simple embedded graphs?'. The answer to this question for different types of graphs and operations is summarised in Table \ref{tab:summary}. 

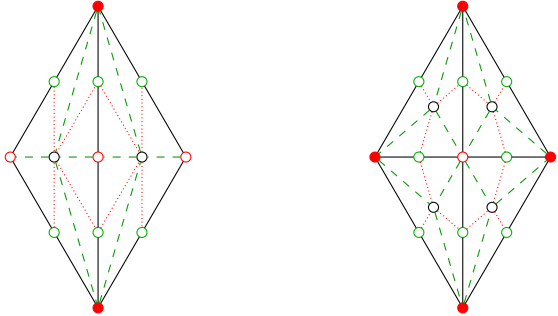
\begin{figure}		
	\centering
	\scalebox{1}{\begin{tikzpicture}
\tikzset{type0/.style={shape=circle, draw=red, scale=0.4, fill=white}}
\tikzset{type1/.style={shape=circle, draw=black!30!green, scale=0.4, fill=white}}
\tikzset{type2/.style={shape=circle, draw=black, scale=0.4, fill=white}}
\tikzset{0edge/.style={draw=red, densely dotted}}
\tikzset{1edge/.style={draw=black!40!green, dashed}}
\tikzset{2edge/.style={}}

\node[type0] (v0) at (-{2*tan(30},0) {};
\node[type0] (v0') at ({2*tan(30},0) {};
\node[type0] (v1) at (0,0) {};
\node[type0, fill=red] (v2') at (0,-2) {};
\node[type0, fill=red] (v2) at (0,2) {};

\node[type2] (f') at ({1*tan(30},0) {};
\node[type2] (f) at ({-1*tan(30},0) {};
\node[type1] (e1) at ({-tan(30},  1) {};
\node[type1] (e2) at ({tan(30},  1) {};
\node[type1] (e3) at ({tan(30},  -1) {};
\node[type1] (e4) at ({-tan(30},  -1) {};
\node[type1] (E) at (0,  1) {};
\node[type1] (E') at (0,  -1) {};

\begin{scope}[2edge]
\draw (v0) -- (e1) --(v2)--(e2) -- (v0') -- (e3) -- (v2') -- (e4) -- (v0);
\draw (v2) -- (E) -- (v1) -- (E') -- (v2');
\end{scope}

\begin{scope}[1edge]
\draw (v0') -- (f') -- (v1) -- (f) -- (v0);
\draw (v2) -- (f) -- (v2') --(f') -- (v2); 

\end{scope}

\begin{scope}[0edge]
\draw (e1) -- (f) -- (E) -- (f') -- (e2);
\draw (e4) -- (f) -- (E') -- (f') -- (e3);
\end{scope}

\end{tikzpicture}} \qquad
	\qquad
	\qquad
	\scalebox{1}{\begin{tikzpicture}
\tikzset{type0/.style={shape=circle, draw=red, scale=0.4, fill=white}}
\tikzset{type1/.style={shape=circle, draw=black!30!green, scale=0.4, fill=white}}
\tikzset{type2/.style={shape=circle, draw=black, scale=0.4, fill=white}}
\tikzset{0edge/.style={draw=red, densely dotted}}
\tikzset{1edge/.style={draw=black!40!green, dashed}}
\tikzset{2edge/.style={}}

\node[type0, fill=red] (v0) at (-{2*tan(30},0) {};
\node[type0, fill=red] (v0') at ({2*tan(30},0) {};
\node[type0] (v1) at (0,0) {};
\node[type0, fill=red] (v2') at (0,-2) {};
\node[type0, fill=red] (v2) at (0,2) {};

\node[type1] (E'') at ({1*tan(30},0) {};
\node[type1] (E''') at ({-1*tan(30},0) {};
\node[type1] (e1) at ({-tan(30},  1) {};
\node[type1] (e2) at ({tan(30},  1) {};
\node[type1] (e3) at ({tan(30},  -1) {};
\node[type1] (e4) at ({-tan(30},  -1) {};
\node[type1] (E) at (0,  1) {};
\node[type1] (E') at (0,  -1) {};
\node[type2] (f1) at ({-2*tan(30)/3},  2/3) {};
\node[type2] (f2) at ({2*tan(30)/3},  2/3) {};
\node[type2] (f3) at ({2*tan(30)/3},  -2/3) {};
\node[type2] (f4) at ({-2*tan(30)/3},  -2/3) {};

\begin{scope}[2edge]
\draw (v0) -- (e1) --(v2)--(e2) -- (v0') -- (e3) -- (v2') -- (e4) -- (v0);
\draw (v2) -- (E) -- (v1) -- (E') -- (v2');
\draw (v0') -- (E'') -- (v1) -- (E''') -- (v0);
\end{scope}

\begin{scope}[1edge]
\draw (v2) -- (f1) -- (v0) -- (f4) -- (v2') -- (f3) -- (v0') -- (f2) -- (v2)
		(f1) -- (v1) -- (f4)
		(f2) -- (v1) -- (f3);

\end{scope}

\begin{scope}[0edge]
\draw (e1) -- (f1) -- (E) -- (f2) -- (e2)
	     (e4) -- (f4) -- (E') -- (f3) -- (e3)
	     (f1) -- (E''') -- (f4)
	     (f2) -- (E'') -- (f3);
\end{scope}

\end{tikzpicture}} \qquad
	\caption{Diamonds of two lopsp-operations. The left operation can be associated with a 2-connected tiling and $O(G)$ has a 1-cut if $G^*$ has a loop. The right operation can be associated with a 4-connected tiling and $O(G)$ has a 3-cut if $G^*$ has a loop. The vertices that induce the cut are filled.}
	\label{fig:examples_2_4}
\end{figure}

We focused on 3-connectivity here because it is the most natural concept in this context: Lsp- and lopsp-operations stem from the study of polyhedra, which are 3-connected plane embedded graphs. In fact, lsp-operations were first defined as what we described here as $c3$-operations. All well-known operations come from 3-connected tilings. Moreover, there are many results that are only true for 3-connected graphs. At first glance, the proof of Theorem~\ref{thm:main_1} seems to work for $k$-connected graphs for $k\neq3$ as well. Lemma~\ref{lem:2_broken} can be adapted for this situation with a small adaptation of the definition of edge-breaking operations. Lemma~\ref{lem:diff_v0sets} on the other hand is not true for $k\neq 3$ unless more complex additional constraints are imposed on the operations. Figure~\ref{fig:examples_2_4} shows examples of edge-preserving operations that do not preserve 2- or 4-connectivity. Excluding these situations would require much more complicated definitions of `connectivity-preserving' operations. 

As multigraphs appear naturally when applying lopsp-operations, an extension of these results could be to apply $c3$-lopsp-operations to 3-connected graphs that are not simple. This requires many modifications of the definitions and proofs. If there are loops operations such as the second one in Figure~\ref{fig:examples_2_4} introduce 2-cuts if a loop has the same face on both sides. It seems that multigraphs with loops lose their connectivity more easily than multigraphs with only multiple edges, but even for multigraphs without loops the results here are not valid. In Figure~\ref{fig:multigraph} an example is given of a 3-connected multigraph without loops that is not 3-connected after applying an edge-preserving lopsp-operation.

\begin{figure}	
	\centering
	\scalebox{0.8}{\begin{tikzpicture}[scale=0.7]
\tikzset{normal/.style={shape=circle, draw=black, scale=0.4,fill=black}}

\node[normal] (v1) at (-2,0) {};
\node[normal] (v2) at (0,-0.75) {};
\node[normal] (v3) at (2,0) {};
\node[normal] (v4) at (0,0.75) {};

\begin{scope}[thick]
\draw (v1) -- (v2) -- (v3) -- (v4) -- (v1);
\draw (v1) -- (-4,0);
\draw (v2) -- (v4);
\draw (v3) -- (4,0);
\draw (v1) edge[in =110, out=70, looseness=1.2] (v3);
\draw (v1) edge[in =-110, out=-70, looseness=1.2] (v3);
\draw (v1) edge[out = 90, in = -90] (0,3);
\draw (0,-3) edge[out = 90, in = -90] (v3);
\end{scope}

\begin{scope}[green,opacity = 0.1]
\end{scope}

\begin{scope}[red,opacity = 0.1]
\end{scope}

\begin{scope}[-{stealth[scale=2]},gray, very thick]
\draw (-4, -3) -- (-4,3) {};
\draw (4, -3) -- (4,3) {};
\draw (-4, -3) -- (4,-3) {};
\draw (-4, 3) -- (4,3) {};
\end{scope}

\end{tikzpicture}} \qquad \qquad
	\scalebox{0.8}{\begin{tikzpicture}[scale=0.7]
\tikzset{normal/.style={shape=circle, draw=black, scale=0.4,fill=black}}
\tikzset{bluevertex/.style={shape=circle, draw=blue, scale=0.4,fill=blue}}
\tikzset{orangevertex/.style={shape=circle, draw=yellow!90!red, scale=0.4,fill=yellow!90!red}}

\begin{scope}[-{stealth[scale=2]},gray, very thick]
\draw (-4, -3) -- (-4,3) {};
\draw (4, -3) -- (4,3) {};
\draw (-4, -3) -- (4,-3) {};
\draw (-4, 3) -- (4,3) {};
\end{scope}

\node[bluevertex] (v1) at (-2,0) {};
\node[normal] (v2) at (0,-0.75) {};
\node[orangevertex] (v3) at (2,0) {};
\node[normal] (v4) at (0,0.75) {};
\node[normal] (f1) at (-0.65,0) {};
\node[normal] (f2) at (0.65,0) {};
\node[normal] (f3) at (0,1.1) {};
\node[normal] (f4) at (0,-1.1) {};
\node[normal] (x1) at (-4,3) {};
\node[normal] (x2) at (4,3) {};
\node[normal] (x3) at (4,-3) {};
\node[normal] (x4) at (-4,-3) {};

\begin{scope}[thick]
\draw[blue] (v4) --(v1) -- (v2);
\draw[yellow!90!red] (v2) -- (v3) -- (v4);
\draw[yellow!90!red!70!blue] (v1) -- (-4,0);
\draw[very thick] (v2) -- (v4);
\draw[yellow!90!red!70!blue] (v3) -- (4,0);
\draw[yellow!90!red!70!blue] (v1) edge[in =110, out=70, looseness=1.2] (v3);
\draw[yellow!90!red!70!blue] (v1) edge[in =-110, out=-70, looseness=1.2] (v3);
\draw[yellow!90!red!70!blue] (v1) edge[out = 90, in = -90] (0,3);
\draw[yellow!90!red!70!blue] (0,-3) edge[out = 90, in = -90] (v3);
\end{scope}

\begin{scope}[thick]
\draw[very thick] (f1) -- (v2) --(f2);
\draw[yellow!90!red] (f2)-- (v3);
\draw[ very thick] (f1) -- (v4) -- (f2);
\draw[very thick] (v2) -- (f4)
	     (v4) -- (f3);
\draw[blue] (x4) -- (v1) -- (x1);
\draw[blue] (f1) -- (v1);
\draw[yellow!90!red] (x2) -- (v3) -- (x3);
\draw (v1) edge[out=45, in=180, looseness=0.75, blue] (f3);
\draw (v1) edge[out=-45, in=180, looseness=0.75, blue] (f4);
\draw (v3) edge[out=135, in=0, looseness=0.75, yellow!90!red] (f3);
\draw (v3) edge[out=-135, in=0, looseness=0.75, yellow!90!red] (f4);
\draw (v1) edge[out=80, in=-150, looseness=0.8, blue] (x2);
\draw (v3) edge[out=-100, in=30, looseness=0.8, yellow!90!red] (x4);
\end{scope}

\end{tikzpicture}} 
	\caption{The left figure shows an embedding of a 3-connected multigraph on the torus. On the right the result of applying the edge-preserving lopsp-operation Kis -- also known as Stellation -- is shown. The vertices of the 2-cut and their incident edges are colored blue, yellow and light brown depending on which vertex or vertices they are incident with.}
	\label{fig:multigraph}
\end{figure}
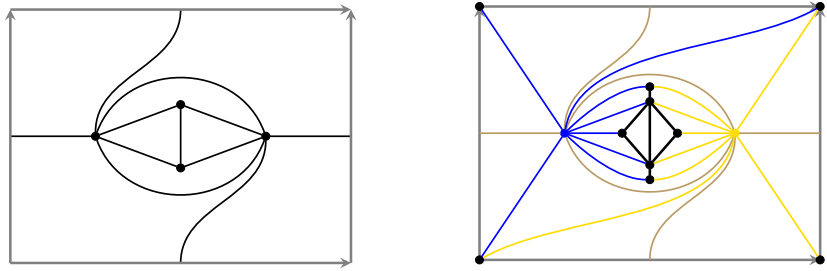

Our results show that the new description for symmetry-preserving operations can be used to prove very general results about all of these operations at the same time. It is our hope that this method will also prove successful to determine the effect of lsp- or lopsp-operations on other graph invariants such as hamiltonicity or the minimum genus.

\section*{Acknowledgements}
I would like to thank Gunnar Brinkmann for his advice and helpful discussions.

\newpage

		\bibliographystyle{amseigen}
		\bibliography{bibliography}

\providecommand{\bysame}{\leavevmode\hbox to3em{\hrulefill}\thinspace}
\providecommand{\MR}{\relax\ifhmode\unskip\space\fi MR }
\providecommand{\MRhref}[2]{%
  \href{http://www.ams.org/mathscinet-getitem?mr=#1}{#2}
}
\providecommand{\href}[2]{#2}
\begin{thebibliography}{10}

\bibitem{bokal2022connectivity}
D. Bokal, G. Brinkmann, and C.~T. Zamfirescu, \emph{The connectivity of the
  dual}, Journal of Graph Theory \textbf{101} (2022), no.~2, 182--209.

\bibitem{brinkmann2017comparing}
G. Brinkmann, P. Goetschalckx, and S. Schein, \emph{Comparing the constructions
  of {G}oldberg, {F}uller, {C}aspar, {K}lug and {C}oxeter, and a general
  approach to local symmetry-preserving operations}, Proceedings of the Royal
  Society A: Mathematical, Physical and Engineering Sciences \textbf{473}
  (2017), no.~2206, 20170267.

\bibitem{brinkmann2021local}
G. Brinkmann and H. Van~den Camp, \emph{On local operations that preserve
  symmetries and on preserving polyhedrality of embeddings}, arXiv preprint
  arXiv:2110.06047 (2021).

\bibitem{conway2016symmetries}
J.~H. Conway, H. Burgiel, and C. Goodman-Strauss, \emph{The symmetries of
  things}, ch.~21, Taylor \& Francis, 2008.

\bibitem{coxeter1973regular}
H.~S.~M. Coxeter, \emph{Regular polytopes}, Courier Corporation, 1973.

\bibitem{coxeter1954uniform}
H.~S.~M. Coxeter, M.~S. Longuet-Higgins, and J.~C.~P. Miller, \emph{Uniform
  polyhedra}, Philosophical Transactions of the Royal Society of London. Series
  A, Mathematical and Physical Sciences \textbf{246} (1954), no.~916, 401--450.

\bibitem{diudea2003leapfrog}
M.~V. Diudea, P.~E. John, A. Graovac, M. Primorac, and T. Pisanski,
  \emph{Leapfrog and related operations on toroidal fullerenes}, Croatica
  Chemica Acta \textbf{76} (2003), no.~2, 153--159.

\bibitem{diudea2006generalized}
M.~V. Diudea, M. Stefu, P.~E. John, and A. Graovac, \emph{Generalized
  operations on maps}, Croatica chemica acta \textbf{79} (2006), no.~3,
  355--362.

\bibitem{dutour2004goldberg}
M. Dutour and M. Deza, \emph{Goldberg-coxeter construction for 3-and 4-valent
  plane graphs}, the electronic journal of combinatorics \textbf{11} (2004),
  no.~R20, 1.

\bibitem{fowler1992symmetry}
P.~W. Fowler and D. Redmond, \emph{Symmetry aspects of bonding in carbon
  clusters: the leapfrog transformation}, Theoretica chimica acta \textbf{83}
  (1992), no.~5, 367--375.

\bibitem{king2006chirality}
R.~B. King and M.~V. Diudea, \emph{The chirality of icosahedral fullerenes: a
  comparison of the tripling (leapfrog), quadrupling (chamfering), and
  septupling (capra) transformations}, Journal of mathematical chemistry
  \textbf{39} (2006), no.~3, 597--604.

\bibitem{mohar1997face}
B. Mohar, \emph{Face-width of embedded graphs}, Mathematica Slovaca \textbf{47}
  (1997), no.~1, 35--63.

\bibitem{pisanski2017operations}
T. Pisanski, G. Williams, and L.~W. Berman, \emph{Operations on oriented maps},
  Symmetry \textbf{9} (2017), no.~11, 274.

\bibitem{vcleemput2021genera}
N. Van~Cleemput, T. Tucker, and G. Brinkmann, \emph{On the genera of polyhedral
  embeddings of cubic graph}, Discrete Mathematics \& Theoretical Computer
  Science \textbf{23} (2021).

\bibitem{vdc2020effect}
H. Van~den Camp, \emph{The effect of local symmetry-preserving operations on
  the connectivity of embedded graphs}, Master's thesis, Ghent University,
  2020.

\end{thebibliography}
\end{document}